\newtheorem{theorem}{Theorem}[section]
\newtheorem{corollary}[theorem]{Corollary}
\newtheorem{lemma}[theorem]{Lemma}
\newtheorem{proposition}[theorem]{Proposition}
\theoremstyle{definition}
\newtheorem{remark}{Remark}[section]
\newtheorem{definition}{Definition}[section]
\numberwithin{equation}{section}
\begin{document}
\title[Asymptotic equivalence]{Asymptotic Equivalence\\
for Nonparametric Regression}
\author[I. Grama and M. Nussbaum]{Ion Grama and Michael Nussbaum}
\address[Grama, I.]{\\
Universit\'e de Bretagne-Sud, Laboratoire SABRES \\
Rue Ives Mainguy\\
56000 Vannes, France}
\email{ion.grama@univ-ubs.fr}
\thanks{The first author was supported by the Alexander von Humboldt-Stiftung.
Research of the second author supported by the National Science Foundation
under Grant DMS 0072162}
\address[Nussbaum, M.]{\\
Department of Mathematics, Mallot Hall, Cornell University\\
Ithaca, NY 14853-4201, USA}
\email{nussbaum@math.cornell.edu}
\date{October, 2001. }
\subjclass{Primary 62B15; Secondary 62G08, 62G20}
\keywords{Asymptotic equivalence, deficiency distance, Gaussian approximation,
nonparametric model}
\dedicatory{\textit{Universit\'e de Bretagne-Sud and Cornell University} }
\maketitle

\begin{abstract}
We consider a nonparametric model $\mathcal{E}^{n},$ generated by
independent observations $X_{i},$ $i=1,...,n,$ with densities $p(x,\theta
_{i}),$ $i=1,...,n,$ the parameters of which $\theta _{i}=f(i/n)\in \Theta $
are driven by the values of an unknown function $f:[0,1]\rightarrow \Theta $
in a smoothness class. The main result of the paper is that, under
regularity assumptions, this model can be approximated, in the sense of the
Le Cam deficiency pseudodistance, by a nonparametric Gaussian shift model $%
Y_{i}=\Gamma (f(i/n))+\varepsilon _{i},$ where $\varepsilon
_{1},...,\varepsilon _{n}$ are i.i.d. standard normal r.v.'s, the function $%
\Gamma (\theta ):\Theta \rightarrow \mathrm{R}$ satisfies $\Gamma ^{\prime
}(\theta )=\sqrt{I(\theta )}$ and $I(\theta )$ is the Fisher information
corresponding to the density $p(x,\theta ).$
\end{abstract}

\section{Introduction\label{sec:Intro}}

Consider the nonparametric regression model with non-Gaussian noise
\begin{equation}
X_{i}=f\left( i/n\right) +\xi _{i},\quad i=1,...,n,  \label{LRG-1}
\end{equation}
where $\xi _{1},...,\xi _{n}$ are i.i.d. r.v.'s of means $0$ and finite
variances, with density $p(x)$ on the real line, and $f(t),$ $t\in \lbrack
0,1],$ is an unknown real valued function. It is well-known that, under some
assumptions, this model shares many desirable asymptotic properties of the
Gaussian nonparametric regression model
\begin{equation}
X_{i}=f\left( i/n\right) +\varepsilon _{i},\quad i=1,...,n,  \label{LRG-2}
\end{equation}
where $\varepsilon _{1},...,\varepsilon _{n}$ are i.i.d. standard normal
r.v.'s. In a formal way, two sequences of statistical experiments are said
to be \emph{asymptotically equivalent} if the Le Cam pseudo-distance between
them tends to $0$ as $n\rightarrow \infty .$ Such a relation between the
model (\ref{LRG-2}) and its continuous time analog was first established by
Brown and Low \cite{Br-Low}. In a paper by Nussbaum \cite{Nuss} the
accompanying Gaussian model for the density estimation from an i.i.d. sample
was found to be the white noise model (\ref{LRG-2}) with the root of the
density as a signal. The case of generalized linear models (i.e. a class of
nonparametric regression models with non-additive noise) was treated in
Grama and Nussbaum \cite{Gr-Nu-2}. However none of the above results covers
observations of the form (\ref{LRG-1}). It is the aim of the present paper
to develop an asymptotic equivalence theory for a more general class of
nonparametric models, in particular for the location type regression model (%
\ref{LRG-1}). In Section \ref{Sect-EX} we shall derive simple sufficient
conditions for the models (\ref{LRG-1}) and (\ref{LRG-2}) to be
asymptotically equivalent; a summary can be given as follows. Let $f$ be in
a H\"{o}lder ball with exponent $\beta >1/2.$ Let $p\left( x\right) $ be the
density of the noise variables $\xi _{i}$, and set $s\left( x\right) =\sqrt{%
p\left( x\right) }.$ Assume that the function $s^{\prime }\left( x\right) $
satisfies a H\"{o}lder condition with exponent $\alpha ,$ where $1/2\beta
<\alpha <1,$ and, for some $\delta >\frac{2\beta +1}{2\beta -1}$ and $%
\varepsilon >0,$%
\begin{equation*}
\sup_{\left| u\right| \leq \varepsilon }\int_{-\infty }^{\infty }\left|
\frac{s^{\prime }(x+u)}{s(x)}\right| ^{2\delta }p(x)dx<\infty .
\end{equation*}
Assume also that the Fisher information in the parametric location model $%
p\left( x-\theta \right) $, $\theta \in \mathrm{R}$ is $1.$ Then the models (%
\ref{LRG-1}) and (\ref{LRG-2}) are asymptotically equivalent. The above
conditions follow from the results of the paper for a larger class of
nonparametric regression models which we now introduce.

Let $p(x,\theta )$ be a parametric family of densities on the measurable
space $(X,\mathcal{X},\mu ),$ where $\mu $ is a $\sigma $-finite measure, $%
\theta \in \Theta $ and $\Theta $ is an interval (possibly infinite) in the
real line. Our nonparametrically driven model is such that at time moments $%
t_{1},...,t_{n}$ we observe a sequence of independent $X$-valued r.v.'s $%
X_{1},...,X_{n}$ with densities $p(x,f(t_{1})),...,p(x,f(t_{n})),$ where $%
f(t),$ $t\in \lbrack 0,1]$ is an unknown function and $t_{i}=i/n,$ $%
i=1,...,n.$ The principal result of the paper is that, under regularity
assumptions on the density $p(x,\theta ),$ this model is asymptotically
equivalent to a sequence of homoscedastic Gaussian shift models, in which we
observe
\begin{equation*}
Y_{i}=\Gamma (f(t_{i}))+\varepsilon _{i},\quad i=1,...,n,
\end{equation*}
where $\varepsilon _{1},...,\varepsilon _{n}$ are i.i.d. standard normal
r.v.'s, $\Gamma (\theta ):\Theta \rightarrow \mathrm{R}$ is a function such
that $\Gamma ^{\prime }(\theta )=\sqrt{I(\theta )}$ and $I(\theta )$ is the
Fisher information in the parametric family $p(x,\theta ),$ $\theta \in
\Theta .$

The function $\Gamma (\theta )$ can be related to the so called variance
stabilizing transformation (see Remark \ref{REMARK-VS_TR} below). In the
case of the location type regression model (\ref{LRG-1}), derived from the
family $p\left( x-\theta \right) $, $\theta \in \Theta $, we have $\Gamma
(\theta )=\theta .$ For other nontrivial examples we refer the reader to our
Section \ref{Sect-EX}, where it is assumed that the density $p(x,\theta )$
is in a fixed exponential family $\mathcal{E}$ (see also Grama and Nussbaum
\cite{Gr-Nu-2}). Notable among these is the binary regression model (cf.
\cite{Gr-Nu-2}): let $X_{i}$ be Bernoulli $0$-$1$ r.v.'s with unknown
probability of success $\theta _{i}=f(t_{i}),$ $i=1,...,n,$ where $f$ is in
a H\"{o}lder ball with exponent $\beta >1/2$ and is separated form $0$ and $%
1.$ Then the accompanying Gaussian model is
\begin{equation*}
Y_{i}=2\arcsin \sqrt{f(t_{i})}+\varepsilon _{i},\quad i=1,...,n.
\end{equation*}
The function $\Gamma (\theta )=2\arcsin \sqrt{\theta }$ is known to be the
variance-stabilizing transformation related to the Bernoulli random
variables.

The global result above is derived from the following local result. Define a
local experiment to be generated by independent observations $%
X_{1},...,X_{n} $ with densities $p(x,g(t_{1})),...,p(x,g(t_{n})),$ where $%
g(t)$ is in a certain neighborhood of ''nonparametric size'' (i.e. whose
radius is large relative to $n^{-1/2}$) of a fixed function $f.$ We show
that this model is asymptotically equivalent to a heteroscedastic Gaussian
model, in which we observe
\begin{equation}
Y_{i}=g(t_{i})+I(f(t_{i}))^{-1/2}\varepsilon _{i},\quad i=1,...,n.
\label{LOC-1}
\end{equation}
As an example, for binary regression (Bernoulli observations) with
probabilities of success $\theta _{i}=f(t_{i}),$ $i=1,...,n$ we obtain the
local Gaussian approximation (\ref{LOC-1}) with $I(\theta )=\frac{1}{\theta
(1-\theta )}.$

In turn, the local approximation (\ref{LOC-1}) is a consequence of a more
general local result for nonparametric models satisfying some regularity
assumptions, which is of independent interest. Namely, we show that if the $%
\log $-likelihood of a nonparametric experiment satisfies a certain
asymptotic expansion in terms of independent random variables, subject to a
Lindeberg type condition, and with a remainder term converging to $0$ at
some rate, then a version of this experiment can be constructed on the same
measurable space with a Gaussian experiment such that the Hellinger distance
between the corresponding probability measures converges to $0$ at some
rate. The proof of the last result is based on obtaining versions of the
likelihood processes on a common probability space by means of a functional
Hungarian construction for partial sums, established in Grama and Nussbaum
\cite{Gr-Nu-1}.

The abovementioned results are part of an effort to extend Le Cam's
asymptotic theory (see Le Cam \cite{LeCam} and Le Cam and Yang \cite
{LCY-2000}) to a class of general models with infinite dimensional
parameters which \emph{cannot be estimated} at the usual ''root-$n$'' rate $%
n^{-1/2}.$ The case of the infinite dimensional parameters which are
estimable at this rate was considered for instance in Millar \cite{Mill}.
The approach used in the proofs of the present results is quite different
from that in the ''root-$n$'' case and was suggested by the papers of Brown
and Low \cite{Br-Low} and Nussbaum \cite{Nuss} (see also Grama and Nussbaum
\cite{Gr-Nu-2}).  An overview of the technique of proof can be found at the
end of the Section \ref{sect Res}.

A nonparametric regression model with random design, but Gaussian noise was
recently treated by  Brown,  Low and Zhang \cite{Br-L-Zh}. We focus here on
the nongaussian case, assuming a regular nonrandom design $i/n$, $i=1,\ldots
,n$: the model is generated by a parametric family of densities $p(x,\theta
),$ $\theta \in \Theta $, where $\theta $ is assumed to take the values $%
f(i/n)$ of the regression function $f$. The term \textit{nonparametrically
driven parametric model }shall also be used for this setup.

The paper is organized as follows. Section \ref{SECTION-BACKGROUND} contains
some background on asymptotic equivalence. Our main results are presented in
Section \ref{sect Res}. In Section \ref{Sect-EX} we illustrate the scope of
our regularity assumptions by considering the case of the location type
regression model and the exponential family model (known also as the
generalized linear model). In Section \ref{Sect-gen-appr} we prove our basic
local result on asymptotic equivalence for a general class of nonparametric
experiments. Then in Section \ref{SECTION-Appl} we just apply this general
local result to the particular case when the nonparametrically driven
parametric model satisfies the regularity assumptions of the Section \ref
{sect Res}. In Sections \ref{SECTION-Local} and \ref{SECTION-Global} we
globalize the previous local result twofold: first over the time interval $%
[0,1]$, and then over the parameter set $\Sigma ^{\beta }.$ The global form
over $\Sigma ^{\beta }$ requires a reparametrization of the family $%
p(x,\theta )$ using $\Gamma (\theta ).$ At the end of Section \ref
{SECTION-Local}, we show that this allows a homoscedastic form of the
accompanying local Gaussian experiment, which can readily be globalized.
Finally in the Appendix we formulate the functional Koml\'{o}s-Major-Tusn%
\'{a}dy approximation proved in \cite{Gr-Nu-1} and some well-known auxiliary
statements.

\section{Background on asymptotic equivalence\label{SECTION-BACKGROUND}}

We follow Le Cam \cite{LeCam} and Le Cam and Yang \cite{LCY-2000}. Let $%
\mathcal{E}=(X,\mathcal{X},\{P_{\theta }:\theta \in \Theta \})$ and $%
\mathcal{G}=(Y,\mathcal{Y},\{Q_{\theta }:\theta \in \Theta \})$ be two
experiments with the same parameter set $\Theta .$ Assume that $(X,\mathcal{X%
})$ and $(Y,\mathcal{Y})$ are complete separable (Polish) metric spaces. The
deficiency of the experiment $\mathcal{E}$ with respect to $\mathcal{G}$ is
defined as
\begin{equation*}
\delta \left( \mathcal{E},\mathcal{G}\right) =\inf_{K}\sup_{\theta \in
\Theta }\left\| K\cdot P_{\theta }-Q_{\theta }\right\| _{\text{Var}},
\end{equation*}
where the infimum is taken over the set of all Markov kernels $K:\left( X,%
\mathcal{X}\right) \rightarrow (Y,\mathcal{Y})$ and $\left\| \cdot \right\|
_{\text{Var}}$ is the total variation norm for measures. Le Cam's distance
between $\mathcal{E}$ and $\mathcal{G}$ is defined to be
\begin{equation*}
\Delta \left( \mathcal{E},\mathcal{G}\right) =\max \{\delta \left( \mathcal{E%
},\mathcal{G}\right) ,\delta \left( \mathcal{G},\mathcal{E}\right) \}.
\end{equation*}
An equivalent definition of the Le Cam distance is obtained if we define the
one sided deficiency $\delta \left( \mathcal{E},\mathcal{G}\right) $ as
follows: let $(D,\mathcal{D})$ be a space of possible decisions. Denote by $%
\Pi (\mathcal{E})$ the set of randomized decision procedures in the
experiment $\mathcal{E},$ i.e. the set of Markov kernels $\pi (x,A):(X,%
\mathcal{X})\rightarrow (D,\mathcal{D}).$ Define $\mathcal{L}(D,\mathcal{D})$
to be the set of all loss functions $L(\theta ,z):\Theta \times D\rightarrow
\lbrack 0,\infty )$ satisfying $0\leq L(\theta ,z)\leq 1.$ The risk of the
procedure $\pi \in \Pi (\mathcal{E})$ for the loss function $L\in \mathcal{L}%
(D,\mathcal{D})$ and true value $\theta \in \Theta $ is set to be
\begin{equation*}
R(\mathcal{E},\pi ,L,\theta )=\int_{X}\int_{D}L(\theta ,z)\pi
(x,dz)P_{\theta }(dx).
\end{equation*}
Then the one-sided deficiency can be defined as

\begin{equation*}
\delta \left( \mathcal{E},\mathcal{G}\right) =\sup \sup_{L\in \mathcal{L}(D,%
\mathcal{D})}\inf_{\pi _{1}\in \Pi (\mathcal{E})}\sup_{\pi _{2}\in \Pi (%
\mathcal{G})}\sup_{\theta \in \Theta }\left| R(\mathcal{E},\pi _{1},L,\theta
)-R(\mathcal{E},\pi _{2},L,\theta )\right| .
\end{equation*}
where the first supremum is over all possible decision spaces $(D,\mathcal{D}%
)$.

Following Le Cam \cite{LeCam}, we introduce the next definition.

\begin{definition}
\label{Definition-AE}Two sequences of statistical experiments $\mathcal{E}%
^{n},$ $n=1,2,...$ and $\mathcal{G}^{n},$ $n=1,2,...$ are said to be \emph{%
asymptotically equivalent} if
\begin{equation*}
\Delta \left( \mathcal{E}^{n},\mathcal{G}^{n}\right) \rightarrow 0,\quad
\text{as}\quad n\rightarrow \infty ,
\end{equation*}
where $\Delta \left( \mathcal{E}^{n},\mathcal{G}^{n}\right) $ is the Le Cam
deficiency pseudo-distance between statistical experiments $\mathcal{E}^{n}$
and $\mathcal{G}^{n}.$
\end{definition}

From the above definitions it follows that, if in the sequence of models $%
\mathcal{E}^{n}$ there is a sequence of procedures $\pi _{1}^{n}$ such that
the risks $R(\mathcal{E}^{n},\pi _{1}^{n},L_{n},\theta )$ converge to the
quantity $\rho (\theta ),$ for a uniformy bounded sequence of loss functions
$L_{n},$ then there is a sequence of procedures $\pi _{2}^{n}$ in $\mathcal{G%
}^{n},$ such that the risks $R(\mathcal{G}^{n},\pi _{2}^{n},L_{n},\theta )$
converge to the same quantity $\rho (\theta ),$ uniformly in $\theta \in
\Theta .$ This indicates that the asymptotically minimax risks for bounded
loss functions in one model can be transferred to another model. In
particular one can compute the asymptotically minimax risk in non-Gaussian
models by computing it in the accompanying Gaussian models. This task,
however, remains beyond of the scope of the present paper.

\section{Formulation of the results\label{sect Res}}

\subsection{The parametric model}

Let $\Theta $ be an interval (possibly infinite) in the real line $\mathrm{R}
$ and
\begin{equation}
\mathcal{E}=\left( X,\mathcal{X},\{P_{\theta }:\theta \in \Theta \}\right)
\label{r-1-1}
\end{equation}
be a statistical experiment on the measurable space $\left( X,\mathcal{X}%
\right) $ with the parameter set $\Theta $ and a dominating $\sigma $-finite
measure $\mu .$ The last property means that for each $\theta \in \Theta $
the measure $P_{\theta }$ is absolutely continuous w.r.t. the measure $\mu .$
Denote by $p\left( x,\theta \right) $ the Radon-Nikodym derivative of $%
P_{\theta }$ w.r.t. $\mu :$%
\begin{equation}
p\left( x,\theta \right) =\frac{P_{\theta }\left( dx\right) }{\mu \left(
dx\right) },\quad x\in X,\quad \theta \in \Theta .  \label{r-1-2}
\end{equation}
For the sake of brevity we set $p(\theta )=p(\cdot ,\theta ).$ We shall
assume in the sequel that for any $\theta \in \Theta $%
\begin{equation}
p\left( \theta \right) >0,\quad \mu \text{-a.s. on }X.  \label{r-1-3}
\end{equation}
Assumption (\ref{r-1-3}) implies that the measures $P_{\theta },$ $\theta
\in \Theta $ are equivalent: $P_{\theta }\sim P_{u},$ for $\theta ,u\in
\Theta .$

\subsection{The nonparametric model}

Set $T=[0,1].$ Let $\mathcal{F}_{0}=\Theta ^{T}$ be the set of all functions
on the unit interval $T=[0,1]$ with values in the interval $\Theta :$
\begin{equation*}
\mathcal{F}_{0}=\Theta ^{T}=\left\{ f:[0,1]\rightarrow \Theta \right\} .
\end{equation*}
Let $\mathcal{H}\left( {\beta },L\right) $ be a H\"{o}lder ball of functions
defined on $T$ and with values in $\mathrm{R},$ i.e. the set of functions $%
f:T\rightarrow \mathrm{R},$ which satisfy a H\"{o}lder condition with
exponent ${\beta >0}$ and constant $L>0:$%
\begin{equation*}
\left| f^{(\beta _{0})}\left( t\right) -f^{(\beta _{0})}\left( s\right)
\right| \leq L\left| t-s\right| ^{\beta _{1}},\quad t,s\in T,
\end{equation*}
where the nonnegative integer $\beta _{0}$ and the real $\beta _{1}\in (0,1]$
are such that $\beta _{0}+\beta _{1}=\beta ,$ and which also satisfy
\begin{equation*}
\left\| f\right\| _{\infty }\leq L\text{, where }\left\| f\right\| _{\infty
}=\sup_{t\in T}\left| f\left( t\right) \right| .
\end{equation*}
Set for brevity, $\Sigma ^{\beta }=\mathcal{F}_{0}\cap \mathcal{H}\left( {%
\beta },L\right) .$ In the nonparametrically driven model to be treated here
it is assumed that we observe a sequence of independent r.v.'s $%
X_{1},...,X_{n}$ with values in the measurable space $\left( X,\mathcal{X}%
\right) ,$ such that, for each $i=1,...,n,$ the observation $X_{i}$ has
density $p\left( x,f\left( i/n\right) \right) $ where the function $f$ is
unknown and satisfies the smoothness condition $f\in \Sigma ^{\beta }.$ We
shall make use of the notation $P_{f}^{n}=P_{f(1/n)}\times ...\times
P_{f(n/n)},$ where $P_{\theta }$ is the probability measure in the
parametric experiment $\mathcal{E}$ and $f\in \Sigma ^{\beta }.$

\subsection{Regularity assumptions}

Assume that ${\beta }>1/2.$ In the sequel the density $p(x,\theta )$ in the
parametric experiment $\mathcal{E}$ shall be subjected to the regularity
assumptions $\left( R1\right) ,$ $\left( R2\right) ,$ $\left( R3\right) ,$
which are assumed to hold true with the same $\varepsilon >0.$

\begin{description}
\item[R1]  The function $s\left( \theta \right) =\sqrt{p\left( \theta
\right) }$ is smooth in the space $L^{2}\left( X,\mathcal{X},\mu \right) :$
there is a real number $\delta \in (\frac{1}{2{\beta }},1)$ and a map $%
\overset{\bullet }{s}\left( \theta \right) :\Theta \rightarrow L^{2}\left( X,%
\mathcal{X},\mu \right) $ such that
\begin{equation*}
\sup_{\left( \theta ,u\right) }\frac{1}{\left| u-\theta \right| ^{1+\delta }}%
\left( \int_{X}\left( s\left( x,u\right) -s\left( x,\theta \right) -\left(
u-\theta \right) \overset{\bullet }{s}\left( x,\theta \right) \right)
^{2}\mu \left( dx\right) \right) ^{1/2}<\infty ,
\end{equation*}
where $\sup $ is taken over all pairs $\left( \theta ,u\right) $ satisfying $%
\theta ,u\in \Theta ,$ $\left| u-\theta \right| \leq \varepsilon .$
\end{description}

It is well-known (see Strasser \cite{Str85}) that there is a map $\overset{%
\bullet }{l}\left( \theta \right) \in L^{2}\left( X,\mathcal{X},\mu \right) $
such that the function $\overset{\bullet }{s}\left( \theta \right) $ in
condition $\left( R1\right) $ can be written as
\begin{equation}
\overset{\bullet }{s}\left( \theta \right) =\frac{1}{2}\overset{\bullet }{l}%
\left( \theta \right) \sqrt{p\left( \theta \right) },\quad \mu \text{-a.s.
on }X.  \label{tan-vect}
\end{equation}
Moreover, $\overset{\bullet }{l}\left( \theta \right) \in L^{2}\left( X,%
\mathcal{X},P_{\theta }\right) $ and
\begin{equation*}
\text{E}_{\theta }\overset{\bullet }{l}\left( \theta \right) =\int_{X}%
\overset{\bullet }{l}\left( x,\theta \right) p\left( x,\theta \right) \mu
\left( dx\right) =0,\quad \theta \in \Theta ,
\end{equation*}
where $\text{E}_{\theta }$ is the expectation under $P_{\theta }.$ The map $%
\overset{\bullet }{l}\left( \theta \right) $ is called \emph{tangent vector}
at $\theta .$ For any $\theta \in \Theta ,$ define an \emph{extended tangent
vector} $\overset{\bullet }{l}_{\theta }\left( u\right) ,$ $u\in \Theta ,$
by setting
\begin{equation}
\overset{\bullet }{l}_{\theta }\left( x,u\right) =\left\{
\begin{array}{ll}
\overset{\bullet }{l}\left( x,\theta \right) , & \text{if\ }u=\theta , \\
\frac{2}{u-\theta }\left( \sqrt{\frac{p\left( x,u\right) }{p\left( x,\theta
\right) }}-1\right) , & \text{if\ }u\neq \theta .
\end{array}
\right.  \label{EXTEND-TG-V}
\end{equation}

\begin{description}
\item[R2]  There is a real number $\delta \in (\frac{2{\beta }+1}{2{\beta }-1%
},\infty )$ such that
\begin{equation*}
\sup_{\left( \theta ,u\right) }\int_{X}\left| \overset{\bullet }{l}_{\theta
}\left( x,u\right) \right| ^{2\delta }p\left( x,\theta \right) \mu \left(
dx\right) <\infty ,
\end{equation*}
where $\sup $ is taken over all pairs $\left( \theta ,u\right) $ satisfying $%
\theta ,u\in \Theta ,$ $\left| u-\theta \right| \leq \varepsilon .$
\end{description}

The Fisher information in the local experiment ${\mathcal{E}}$ is
\begin{equation}
I\left( \theta \right) =\int_X\left( \overset{\bullet }{l}\left( x,\theta
\right) \right) ^2p\left( x,\theta \right) \mu \left( dx\right) ,\quad
\theta \in \Theta .  \label{Fisher-inf}
\end{equation}

\begin{description}
\item[R3]  There are two real numbers $I_{\min }$ and $I_{\max }$ such that
\begin{equation*}
0<I_{\min }\leq I\left( \theta \right) \leq I_{\max }<\infty ,\quad \theta
\in \Theta .
\end{equation*}
\end{description}

\subsection{Local result}

First we state a local Gaussian approximation. For any $f\in \Sigma ^{\beta
},$ denote by $\Sigma _{f}^{\beta }\left( r\right) $ the neighborhood of $f,$
shifted to the origin:
\begin{equation*}
\Sigma _{f}^{\beta }\left( r\right) =\left\{ h:\left\| h\right\| _{\infty
}\leq r,\;f+h\in \Sigma ^{\beta }\right\} .
\end{equation*}
Set
\begin{equation*}
\overline{\gamma }_{n}=c\left( {\beta }\right) \left( \frac{\log n}{n}%
\right) ^{\frac{\beta }{2{\beta }+1}},
\end{equation*}
where $c\left( {\beta }\right) $ is a constant depending on ${\beta }.$
Throughout the paper $\gamma _{n}$ will denote a sequence of real numbers
satisfying, for some constant $c\geq 0,$%
\begin{equation}
\overline{\gamma }_{n}\leq \gamma _{n}=O(\overline{\gamma }_{n}\log ^{c}n).
\label{nonpar-rate}
\end{equation}
By definition the \emph{local experiment}
\begin{equation*}
\mathcal{E}_{f}^{n}=\left( X^{n},\mathcal{X}^{n},\{P_{f+h}^{n}:h\in \Sigma
_{f}^{\beta }(\gamma _{n})\}\right)
\end{equation*}
is generated by the sequence of independent r.v.'s $X_{1},...,X_{n},$ with
values in the measurable space $\left( X,\mathcal{X}\right) ,$ such that for
each $i=1,...,n,$ the observation $X_{i}$ has density $p\left( x,g\left(
i/n\right) \right) ,$ where $g=f+h,$ $h\in \Sigma _{f}^{\beta }\left( \gamma
_{n}\right) .$ An equivalent definition is:
\begin{equation}
\mathcal{E}_{f}^{n}=\mathcal{E}_{f}^{\left( 1\right) }\otimes ...\otimes
\mathcal{E}_{f}^{\left( n\right) },  \label{F-1}
\end{equation}
where
\begin{equation}
\mathcal{E}_{f}^{\left( i\right) }=\left( X,\mathcal{X},\{P_{g\left(
i/n\right) }:g=f+h,\;h\in \Sigma _{f}^{\beta }\left( \gamma _{n}\right)
\}\right) ,\quad i=1,...,n.  \label{F-2}
\end{equation}

\begin{theorem}
\label{Theorem R-2}Let ${\beta }>1/2$ and $I\left( \theta \right) $ be the
Fisher information in the parametric experiment $\mathcal{E}.$ Assume that
the density $p(x,\theta )$ satisfies the regularity conditions $\left(
R1-R3\right) .$ For any $f\in \Sigma ^{\beta },$ let $\mathcal{G}_{f}^{n}$
be the local Gaussian experiment, generated by observations
\begin{equation*}
Y_{i}^{n}=h\left( i/n\right) +I\left( f\left( i/n\right) \right)
^{-1/2}\varepsilon _{i},\quad i=1,...,n,
\end{equation*}
with $h\in \Sigma _{f}^{\beta }\left( r_{n}\right) ,$ where $\varepsilon
_{1},...,\varepsilon _{n}$ is a sequence of i.i.d. standard normal r.v.'s.
Then, uniformly in $f\in \Sigma ^{\beta },$ the sequence of local
experiments $\mathcal{E}_{f}^{n},$ $n=1,2,...$ is asymptotically equivalent
to the sequence of local Gaussian experiments $\mathcal{G}_{f}^{n},$ $%
n=1,2,...:$%
\begin{equation*}
\sup_{f\in \Sigma ^{\beta }}\Delta \left( \mathcal{E}_{f}^{n},\mathcal{G}%
_{f}^{n}\right) \rightarrow 0,\quad \text{as}\quad n\rightarrow \infty .
\end{equation*}
\end{theorem}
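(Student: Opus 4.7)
The plan is to establish both one-sided Le Cam deficiencies simultaneously by coupling the two experiments on a common probability space so that the Hellinger distance between $P_{f+h}^{n}$ and $Q_{f+h}^{n}$ tends to zero uniformly in $f\in\Sigma^{\beta}$ and $h\in\Sigma_{f}^{\beta}(\gamma_{n})$. Since total variation is dominated by Hellinger distance and a realization of both measures on the same space supplies (via the identity kernel) a Markov kernel matching the marginals, this controls $\Delta(\mathcal{E}_{f}^{n},\mathcal{G}_{f}^{n})$. The argument has two stages: an abstract local approximation theorem reducing the question to an asymptotic expansion of the log-likelihood into independent summands with a Lindeberg-type remainder (the basic local result announced in Sect.~\ref{sect Res} and established in Sect.~\ref{Sect-gen-appr}), and verification of its hypotheses for the nonparametrically driven family using $(R1)$--$(R3)$ (carried out in Sect.~\ref{SECTION-Appl}).

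Using the Hellinger differentiability from $(R1)$, set $\theta_{i}=f(i/n)$, $u_{i}=h(i/n)$, and $z_{i}=(s(X_{i},\theta_{i}+u_{i})-s(X_{i},\theta_{i}))/s(X_{i},\theta_{i})$; expanding $2\log(1+z_{i})=2z_{i}-z_{i}^{2}+O(|z_{i}|^{3})$ and replacing the square-root increment by its Hellinger derivative via (\ref{tan-vect}) yields
\begin{equation*}
\log\frac{dP_{f+h}^{n}}{dP_{f}^{n}}=\sum_{i=1}^{n}u_{i}\,\dot{l}(X_{i},\theta_{i})-\tfrac{1}{2}\sum_{i=1}^{n}u_{i}^{2}\,I(\theta_{i})+R_{n}(f,h),
\end{equation*}
where the sum of squared $L^{2}(P_{f}^{n})$-norms of the remainder terms is $O(n\,\gamma_{n}^{2(1+\delta)})$ by $(R1)$. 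The choice $\gamma_{n}\asymp(\log n/n)^{\beta/(2\beta+1)}$ combined with $\delta>1/(2\beta)$ in $(R1)$ is exactly tuned so that $n\,\gamma_{n}^{2(1+\delta)}\to 0$, hence $R_{n}(f,h)=o_{P_{f}^{n}}(1)$. The Gaussian log-likelihood in $\mathcal{G}_{f}^{n}$ is
\begin{equation*}
\log\frac{dQ_{f+h}^{n}}{dQ_{f}^{n}}=\sum_{i=1}^{n}u_{i}\,I(\theta_{i})^{1/2}\varepsilon_{i}-\tfrac{1}{2}\sum_{i=1}^{n}u_{i}^{2}\,I(\theta_{i}),
\end{equation*}
with the same quadratic drift, so the task reduces to coupling the weighted linear statistic $\sum_{i}u_{i}\,\dot{l}(X_{i},\theta_{i})$ with $\sum_{i}u_{i}\,I(\theta_{i})^{1/2}\varepsilon_{i}$, uniformly in $h$.

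The coupling is produced by the functional Koml\'{o}s--Major--Tusn\'{a}dy approximation for partial sums of independent, non-identically distributed random variables stated in the Appendix and proved in \cite{Gr-Nu-1}. Applied to the triangular array $\{\dot{l}(X_{i},\theta_{i})\}$, whose $(2\delta)$-th moment is uniformly bounded by $(R2)$ and whose variance is bounded from below and above by $(R3)$, it yields standard normals $\varepsilon_{1},\ldots,\varepsilon_{n}$ on the same probability space such that $\max_{k\leq n}\bigl|\sum_{i\leq k}\dot{l}(X_{i},\theta_{i})-\sum_{i\leq k}I(\theta_{i})^{1/2}\varepsilon_{i}\bigr|$ is of smaller order than $n^{1/(2\delta)}$ up to logarithmic factors. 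Abel summation transfers this to the $h$-weighted sums, exploiting the H\"{o}lder regularity $h\in\Sigma_{f}^{\beta}(\gamma_{n})$ to control the increments $|u_{i+1}-u_{i}|$; the margin $\delta>(2\beta+1)/(2\beta-1)$ in $(R2)$ is exactly what makes the resulting weighted error $o(1)$. Combined with the expansion above, this gives $\log(dP_{f+h}^{n}/dP_{f}^{n})=\log(dQ_{f+h}^{n}/dQ_{f}^{n})+o_{P}(1)$ on the coupling; uniform integrability of the likelihood ratios, again supplied by $(R2)$, converts this into the required Hellinger bound by a standard argument in the spirit of \cite{Nuss}.

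The main obstacle is achieving uniformity over the nonparametric ball $\Sigma_{f}^{\beta}(\gamma_{n})$ of radius $\gamma_{n}\gg n^{-1/2}$ without losing in the Hungarian rate, and to do so simultaneously for every $f\in\Sigma^{\beta}$. The rate exponents in $(R1)$--$(R2)$ must interlock tightly with $\gamma_{n}$: $\delta>1/(2\beta)$ in $(R1)$ is precisely the threshold at which the Hellinger Taylor remainder becomes negligible at rate $\gamma_{n}$, while $\delta>(2\beta+1)/(2\beta-1)$ in $(R2)$ is precisely what the KMT approximation requires in order for the Abel-transformed error, after a chaining/covering argument over $\Sigma_{f}^{\beta}(\gamma_{n})$, to vanish uniformly in $h$. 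Upgrading pointwise-in-$h$ estimates to uniform statements through this chaining step, while keeping track of the interaction between the moment exponent $\delta$, the smoothness index $\beta$, and the logarithmic factor in $\gamma_{n}$, is the most delicate part of the argument.
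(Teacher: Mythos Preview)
Your outline correctly identifies the two main ingredients---the stochastic expansion of the log-likelihood via $(R1)$ and the functional KMT coupling of the linear term---but it omits the step that actually makes the argument close, namely a \emph{blocking} of the time interval. In the paper the interval $[0,1]$ is partitioned into $M_n\asymp \gamma_n^{-2\alpha'}$ pieces of length $\delta_n$, each containing $n_k\asymp n\delta_n$ design points; the key calibration is that $\gamma_n\asymp n_k^{-1/2}$, so that on each block the local parameter $h_k$ lies in a neighborhood of the \emph{parametric} size $n_k^{-1/2}$. The coupling and the Hellinger bound (Theorems \ref{THEOREM-APPL-1} and \ref{Theorem LAQ-1}) are then applied blockwise, yielding $H^2(\widetilde P^{n,k},\widetilde Q^{n,k})=O(n_k^{-\alpha})$, and the product inequality for Hellinger distance sums these to $M_n n_k^{-\alpha}=o(1)$.

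Without blocking your direct approach breaks down because $\sum_i u_i^2 I(\theta_i)\asymp n\gamma_n^2\asymp n^{1/(2\beta+1)}\to\infty$. The log-likelihood ratios are therefore not $O_P(1)$ under the center measure, and the step ``uniform integrability \dots\ converts this into the required Hellinger bound'' does not go through: in the paper's Theorem~\ref{Theorem-G-1} the conversion from $o_P(1)$ closeness of log-likelihoods to a Hellinger bound relies on moderate-deviation estimates (conditions (\ref{CC2})--(\ref{CC3})) whose proof (Lemma~\ref{Lemma moder-P}) explicitly uses $\sum_i h(t_{ni})^2 I(f(t_{ni}))=O(1)$; at the nonparametric radius $\gamma_n$ that bound is false and the exponential Chebyshev inequality yields nothing. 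The chaining you propose handles uniformity in $h$, but it does not repair the fact that on the full sample the likelihoods live on a divergent scale.

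A secondary gap: the KMT result used here (Theorem~\ref{THEOREM-HUNG-CONSTR}) is stated under Sakhanenko's exponential moment condition, not under a polynomial $2\delta$-moment as in $(R2)$. The paper handles this by a truncation step (Lemma~\ref{Lemma-modif-exp}) that replaces the scores $\dot l(X_i,\theta_i)$ by bounded surrogates $\xi_{ni}^*$ with the same first two moments, at the cost of an additional remainder controlled via the Lindeberg-type condition $(C3)$. Your plan applies the Hungarian construction directly to the unbounded scores, which is not justified by the theorem as stated.
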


\subsection{Global result}

By definition the \emph{global experiment}
\begin{equation*}
\mathcal{E}^{n}=\left( X^{n},\mathcal{X}^{n},\{P_{f}^{n}:f\in \Sigma ^{\beta
}\}\right)
\end{equation*}
is generated by the sequence of independent r.v.'s $X_{1},...,X_{n},$ with
values in the measurable space $\left( X,\mathcal{X}\right) ,$ such that for
each $i=1,...,n,$ the observation $X_{i}$ has density $p\left( x,f\left(
i/n\right) \right) ,$ where $f\in \Sigma ^{\beta }.$\emph{\ }In other words $%
\mathcal{E}^{n}$ is the product experiment
\begin{equation*}
\mathcal{E}^{n}=\mathcal{E}^{\left( 1\right) }\otimes ...\otimes \mathcal{E}%
^{\left( n\right) },
\end{equation*}
where
\begin{equation*}
\mathcal{E}^{\left( i\right) }=\left( X,\mathcal{X},\{P_{f\left( i/n\right)
}:f\in \Sigma ^{\beta }\}\right) ,\quad i=1,...,n.
\end{equation*}

We shall make the following assumptions:

\begin{description}
\item[G1]  For any $\beta >1/2,$ there is an estimator $\widehat{f}%
_{n}:X^{n}\rightarrow \Sigma ^{\beta },$ such that
\begin{equation*}
\sup_{f\in \Sigma ^{\beta }}P\left( \left\| \widehat{f}_{n}-f\right\|
_{\infty }\geq \gamma _{n}\right) \rightarrow 0,\quad n\rightarrow \infty .
\end{equation*}

\item[G2]  The Fisher information $I(\theta ):\Theta \rightarrow (0,\infty )$
satisfies a H\"{o}lder condition with exponent $\alpha \in (1/2\beta ,1).$
\end{description}

The main result of the paper is the following theorem, which states a global
Gaussian approximation for the sequence of experiments $\mathcal{E}^{n},$ $%
n=1,2,...$ in the sense of Definition \ref{Definition-AE}.

\begin{theorem}
\label{Theorem R-1}Let ${\beta }>1/2$ and $I\left( \theta \right) $ be the
Fisher information in the parametric experiment $\mathcal{E}.$ Assume that
the density $p(x,\theta )$ satisfies the regularity conditions (R1-R3) and
that conditions (G1-G2) are fulfilled. Let $\mathcal{G}^{n}$ be the Gaussian
experiment generated by observations
\begin{equation*}
Y_{i}^{n}=\Gamma \left( f\left( i/n\right) \right) +\varepsilon _{i},\quad
i=1,...,n,
\end{equation*}
with $f\in \Sigma ^{\beta },$ where $\varepsilon _{1},...,\varepsilon _{n}$
is a sequence of i.i.d. standard normal r.v.'s and $\Gamma \left( \theta
\right) :\Theta \rightarrow \mathrm{R}$ is any function satisfying $\Gamma
^{\prime }\left( \theta \right) =\sqrt{I\left( \theta \right) }.$ Then the
sequence of experiments $\mathcal{E}^{n},$ $n=1,2,...$ is asymptotically
equivalent to the sequence of Gaussian experiments $\mathcal{G}^{n},$ $%
n=1,2,...:$%
\begin{equation*}
\Delta \left( \mathcal{E}^{n},\mathcal{G}^{n}\right) \rightarrow 0,\quad
\text{as}\quad n\rightarrow \infty .
\end{equation*}
\end{theorem}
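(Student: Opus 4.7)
The plan is to globalize Theorem \ref{Theorem R-2} by a two-stage sample-splitting scheme combined with the variance-stabilizing reparametrization $\Gamma$. First I would split the $n$ observations into two independent subsamples $S_{1}$ and $S_{2}$ of size $m\sim n/2$ (odd versus even indices, with a minor reindexing of the design points $t_{i}$ to preserve the equispaced structure). Applying (G1) to $S_{1}$ yields a preliminary estimator $\widehat{f}\in\Sigma^{\beta}$ with $P_{f}^{m}(\|\widehat{f}-f\|_{\infty}>\gamma_{m})\to 0$ uniformly in $f\in\Sigma^{\beta}$. On the high-probability event $A_{n}=\{\|\widehat{f}-f\|_{\infty}\le\gamma_{m}\}$, conditioning on $\widehat{f}$ turns $S_{2}$ into an observation in the local experiment $\mathcal{E}_{\widehat{f}}^{m}$ with shift $h=f-\widehat{f}\in\Sigma_{\widehat{f}}^{\beta}(\gamma_{m})$. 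By Theorem \ref{Theorem R-2} applied conditionally on $\widehat{f}$, this local experiment is asymptotically equivalent, uniformly in $\widehat{f}$ and $f$, to the heteroscedastic local Gaussian experiment with observations $Y_{i}=(f-\widehat{f})(i/m)+I(\widehat{f}(i/m))^{-1/2}\varepsilon_{i}$, $\varepsilon_{i}$ i.i.d.\ $N(0,1)$.

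Next I would invoke the variance-stabilizing transformation. Given $\widehat{f}$, the affine map $Y_{i}\mapsto Z_{i}:=\Gamma(\widehat{f}(i/m))+\sqrt{I(\widehat{f}(i/m))}\,Y_{i}$ is a measurable bijection of the data and hence preserves the Le Cam distance. Because $\Gamma^{\prime}(\theta)=\sqrt{I(\theta)}$ and (G2) gives a H\"older bound of exponent $\alpha$ on $\sqrt{I}$, a Taylor expansion of $\Gamma$ at $\widehat{f}(i/m)$ yields
\[
\Gamma(f(i/m))=\Gamma(\widehat{f}(i/m))+\sqrt{I(\widehat{f}(i/m))}\,(f-\widehat{f})(i/m)+r_{i},
\]
with $|r_{i}|\le C\gamma_{m}^{1+\alpha}$ on $A_{n}$. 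Thus $Z_{i}=\Gamma(f(i/m))-r_{i}+\varepsilon_{i}$, a homoscedastic Gaussian shift around $\Gamma(f(i/m))$ with a deterministic mean perturbation $-r_{i}$.

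To pass from this perturbed model to the target $Z_{i}^{\ast}=\Gamma(f(i/m))+\varepsilon_{i}$ one applies the standard bound for the total-variation distance between two Gaussian shifts with common covariance, controlled by the Euclidean norm of the mean perturbation: $(\sum_{i}r_{i}^{2})^{1/2}\le \sqrt{m}\,C\gamma_{m}^{1+\alpha}$. The exponent condition $\alpha>1/(2\beta)$ in (G2), together with $\gamma_{m}=O(\overline{\gamma}_{m}\log^{c}m)$, implies $m^{1/2}\gamma_{m}^{1+\alpha}\to 0$, uniformly in $f$. Combining this with $P(A_{n}^{c})\to 0$ converts the conditional deficiency bound into an unconditional one, so the subexperiment based on $S_{2}$ is asymptotically equivalent to the corresponding half of $\mathcal{G}^{n}$. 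A symmetric construction (using $S_{2}$ to produce a preliminary estimator for $S_{1}$) equivalises the other half, and combining the two independent halves reconstructs $\Delta(\mathcal{E}^{n},\mathcal{G}^{n})\to 0$.

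The main obstacle is the bookkeeping required to turn the conditional application of Theorem \ref{Theorem R-2} into a uniform unconditional bound over $\Sigma^{\beta}$, while simultaneously making the linearization of $\Gamma$ yield a negligible perturbation at exactly the nonparametric rate $\gamma_{n}$. The H\"older exponent $\alpha>1/(2\beta)$ in (G2) is precisely what closes this gap, beating the rate $\overline{\gamma}_{n}\sim (\log n/n)^{\beta/(2\beta+1)}$ appearing in Theorem \ref{Theorem R-2}. A subsidiary technical point is verifying that the preliminary estimator from (G1) can be taken with values in $\Sigma^{\beta}$ (if needed by projection), so that the conditional application of Theorem \ref{Theorem R-2} is legitimate.
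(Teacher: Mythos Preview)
Your proposal is correct and follows essentially the same route as the paper: odd/even sample splitting, a preliminary estimator from (G1) on one half, the local equivalence result applied conditionally on the other half, and the variance-stabilizing linearization of $\Gamma$ (which the paper isolates as Corollary \ref{CORR-VAR_ST}) to pass to the homoscedastic Gaussian model. The only point to make precise is your last step: ``combining the two independent halves'' should be read as the two-step chain through the hybrid experiment $\mathcal{F}^{n}$ (odd indices non-Gaussian, even indices Gaussian), i.e.\ $\mathcal{E}^{n}\to\mathcal{F}^{n}\to\mathcal{G}^{n}$, rather than performing both substitutions simultaneously, since each preliminary estimator is built from the half that must remain untouched in that step.
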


\begin{remark}
Examples in Efromovich and Samarov \cite{Efr-Sam}, Brown and Zhang \cite
{Br-Zh} [see also Brown and Low \cite{Br-Low}] show that asymptotic
equivalence, in general, fails to hold true when $\beta \leq 1/2.$
\end{remark}

\begin{remark}
Assumption (G1) is related to attainable rates of convergence in the
sup-norm $\left\| \cdot \right\| _{\infty }$ for nonparametric regression.
It is well known that for a parameter space $\mathcal{H}\left( {\beta }%
,L\right) $, $\beta \leq 1$ the attainable rate for estimators of $f$ is $%
\left( \log n/n\right) ^{\beta /(2\beta +1)}$ in regular cases (cf. Stone
\cite{ston}). For a choice $\gamma _{n}=\bar{\gamma}_{n}\log ^{c}n$
condition (G1) would be implied by this type of result. However for a choice
$\gamma _{n}=\bar{\gamma}_{n}=c(\beta )\left( \left( \log n/n\right) ^{\beta
/(2\beta +1)}\right) $  (G1) is slightly stronger (the classical rate result
would also require $c(\beta )\rightarrow \infty $ for convergence to $0$ in
(G1)). In the case of the Gaussian location-type regression ((\ref{LRG-1})
for normal $\xi _{i}$) this is a consequence of the optimal constant result
of Korostelev \cite{koro}. The extension to our nongaussian regression
models would be of technical nature; for the density estimation model it has
been verified in Korostelev and Nussbaum \cite{k-n} and applied in a similar
context to here in Lemma 9.3 of \cite{Nuss} .
\end{remark}

\begin{remark}
\label{REMARK-VS_TR}The function $\Gamma \left( \theta \right) $ can be
related to so called \emph{variance-stabilizing} transformation, which we
proceed to introduce. Let $X_{1},...,X_{n}$ be a sequence of real valued
i.i.d. r.v.'s, with law depending on the parameter $\theta \in \Theta .$ Let
$\mu \left( \theta \right) $ be the common mean and $\sigma (\theta )$ be
the common variance. By the central limit theorem,
\begin{equation*}
\sqrt{n}\left\{ S_{n}-\mu (\theta )\right\} \overset{d}{\rightarrow }N\left(
0,\sigma \left( \theta \right) \right) ,
\end{equation*}
where $S_{n}=\left( X_{1}+...+X_{n}\right) /n.$ The variance-stabilizing
transformation is defined to be a function $F$ on the real line, with the
property that the variance of the limiting normal law does not depend on $%
\theta $, i.e. \
\begin{equation*}
\sqrt{n}\left\{ F\left( S_{n}\right) -F(\mu (\theta ))\right\} \overset{d}{%
\rightarrow }N\left( 0,1\right) .
\end{equation*}

The function $F$ pertaining to our nonparametric model can be computed and
the relation to the function $\Gamma $ can be clarified in some particular
cases. Let $\sigma (\theta )=I(\theta )>0$ and $\mu \left( \theta \right) $
satisfy $\mu ^{\prime }\left( \theta \right) =I\left( \theta \right) .$ Let $%
a\left( \lambda \right) $ be the inverse of the strictly increasing function
$\mu \left( \theta \right) $ on the interval $\Theta $ and $\mu \left(
\Theta \right) $ be its range. Assume that the $X_{i}$ take values in $\mu
(\Theta ).$ One can easily see that a variance stabilizing transformation is
any function $F(\lambda ),$ $\lambda \in \mu (\Theta )$ satisfying $%
F^{\prime }\left( \lambda \right) =1/\sqrt{I\left( a\left( \lambda \right)
\right) },$ $\lambda \in \mu (\Theta ).$ In this case, our transformation of
the functional parameter $f$ is actually the transformation of the mean $\mu
(\theta )$%
\begin{equation*}
\Gamma (\theta )=F(\mu (\theta )),
\end{equation*}
corresponding to this variance stabilizing transformation. In the other
particular case, when the mean of $X_{i}$ is $\mu (\theta )=\theta ,$ both
transformations $\Gamma $ and $F$ just coincide: $\Gamma (\theta )=F(\theta
).$
\end{remark}

We follow the line developed in Nussbaum \cite{Nuss} (see also Grama and
Nussbaum \cite{Gr-Nu-2}). The proof of Theorem \ref{Theorem R-2} is given in
Section \ref{SECTION-Local} and contains three steps:

\begin{itemize}
\item  Decompose the local experiments $\mathcal{E}_{f}^{n}$ and $\mathcal{G}%
_{f}^{n}$ into products of independent experiments:
\begin{equation*}
\mathcal{E}_{f}^{n}=\mathcal{E}_{f}^{n,1}\otimes ...\otimes \mathcal{E}%
_{f}^{n,m},\quad \mathcal{E}_{f}^{n}=\mathcal{G}_{f}^{n,1}\otimes ...\otimes
\mathcal{G}_{f}^{n,m}.
\end{equation*}
Here $m=o(n)$ and $\mathcal{E}_{f}^{n,k}$ represents the $k$-th ''block'' of
observations of size (approximately) $n/m$.

\item  Show that each component $\mathcal{E}_{f}^{n,k}$ can be well
approximated by its Gaussian counterpart $\mathcal{G}_{f}^{n,k}$ in the
sense that there exist equivalent versions $\widetilde{\mathcal{E}}%
_{f}^{n,k} $ and $\widetilde{\mathcal{G}}_{f}^{n,k}$ on a common measurable
space such that
\begin{equation*}
H^{2}\left( \widetilde{P}_{f,h}^{n,k},\widetilde{Q}_{f,h}^{n,k}\right)
=o\left( m^{-1}\right) ,
\end{equation*}
where $H^{2}\left( \cdot ,\cdot \right) $ is the Hellinger distance between
the probability measures $\widetilde{P}_{f,h}^{n,k}$ and $\widetilde{Q}%
_{f,h}^{n,k}$ in the local experiments $\widetilde{\mathcal{E}}_{f}^{n,k}$
and $\widetilde{\mathcal{G}}_{f}^{n,k}\mathcal{\ }$respectively.

\item  Patch together the components $\widetilde{\mathcal{E}}_{f}^{n,k}$ and
$\widetilde{\mathcal{G}}_{f}^{n,k},$ $k=1,...,m$ by means of the convenient
property of the Hellinger distance for the product of probability measures
\begin{equation*}
H^{2}\left( \widetilde{P}_{f,h}^{n},\widetilde{Q}_{f,h}^{n}\right) \leq
\sum_{k=1}^{m}H^{2}\left( \widetilde{P}_{f,h}^{n,k},\widetilde{Q}%
_{f,h}^{n,k}\right) =\sum_{i=1}^{m}o\left( m^{-1}\right) =o\left( 1\right) .
\end{equation*}
\end{itemize}

The challenge here is the second step. For its proof, in Section \ref
{Sect-gen-appr}, we develop a general approach, according to which any
experiment $\mathcal{E}^{n}$ with a certain asymptotic expansion of its
log-likelihood ratio (condition LASE) can be constructed on the same
measurable space with a Gaussian experiment $\mathcal{G}^{n},$ such that the
Hellinger distance between the corresponding probability measures converges
to $0$ at a certain rate. Then we are able to check condition LASE for the
model under consideration using a strong approximation result (see Theorem
\ref{THEOREM-HUNG-CONSTR}).

Theorem \ref{Theorem R-1} is derived from the local result of Theorem \ref
{Theorem R-2} by means of a globalizing procedure, which is presented in
Section \ref{SECTION-Global}.

\section{Examples\label{Sect-EX}}

\subsection{Location type regression model}

Consider the regression model with non-Gaussian additive noise
\begin{equation}
X_{i}=f(i/n)+\xi _{i},\quad i=1,...,n,  \label{EX-L-1}
\end{equation}
where $\xi _{1},...,\xi _{n}$ are i.i.d. r.v.'s of means $0$ and finite
variances, with density $p(x)$ on the real line, $f\in \Sigma ^{\beta }$ and
$\Sigma ^{\beta }$ is a H\"{o}lder ball on $[0,1]$ with exponent $\beta >1/2.
$ This model is a particular case of the nonparametrically driven model,
introduced in Section \ref{sect Res}, when $p(x,\theta )=p(x-\theta )$ is a
shift parameter family and $\theta \in \mathrm{R}.$ Assume that the
derivative $p^{\prime }(x)$ exists, for all $x\in \mathrm{R}.$ Then the
extended tangent vector, defined by (\ref{tan-vect}), is computed as follows
\begin{equation*}
\overset{\bullet }{l}_{\theta }(x,u)=\left\{
\begin{array}{ll}
\frac{p^{\prime }(x-\theta )}{p(x-\theta )}, & \quad \text{if}\quad u=\theta
, \\
\frac{2}{u-\theta }\frac{\sqrt{p(x-u)}-\sqrt{p(x-\theta )}}{\sqrt{p(x-\theta
)}}, & \quad \text{if}\quad u\neq \theta .
\end{array}
\right.
\end{equation*}
Set $s\left( x\right) =\sqrt{p(x)}.$ Then it is easy to see that conditions
(R1-R3) hold true, if we assume that the following items are satisfied:

\begin{description}
\item[L1]  The function $s^{\prime }\left( x\right) $ satisfies H\"{o}lder's
condition with exponent $\alpha ,$ where $\alpha \in (1/2\beta ,1),$ i.e.
\begin{equation*}
\left| s^{\prime }\left( x\right) -s^{\prime }\left( y\right) \right| \leq
C\left| x-y\right| ^{\alpha },\quad x,y\in \mathrm{R}.
\end{equation*}

\item[L2]  For some $\delta >\frac{2\beta +1}{2\beta -1}$ and $\varepsilon
>0,$%
\begin{equation*}
\sup_{|u|\leq \varepsilon }\int_{-\infty }^{\infty }\left| \frac{s^{\prime
}(x+u)}{s(x)}\right| ^{2\delta }p(x)dx<\infty .
\end{equation*}

\item[L3]  The Fisher informational number is positive:
\begin{equation*}
I=4\int_{-\infty }^{\infty }s^{\prime }(x)^{2}dx=\int_{-\infty }^{\infty }%
\frac{p^{\prime }(x)^{2}}{p(x)}dx>0.
\end{equation*}
\end{description}

It is well-known that a preliminary estimator satisfying condition (G1)
exists in this model. Then, according to Theorem \ref{Theorem R-1}, under
conditions (L1-L3) the model defined by observations (\ref{EX-L-1}) is
asymptotically equivalent to a linear regression with Gaussian noise, in
which we observe
\begin{equation*}
Y_{i}=f(i/n)+I^{-1/2}\varepsilon _{i},
\end{equation*}
where $\varepsilon _{1},...,\varepsilon _{n}$ are i.i.d. standard normal
r.v.'s.

\subsection{Exponential family model}

Another particular case of the model introduced in Section \ref{sect Res}
arises when the parametric experiment $\mathcal{E}=(X,\mathcal{X}%
,\{P_{\theta }:\theta \in \Theta \})$ is an one-dimensional linearly indexed
exponential family, where $\Theta $ is a possibly infinite interval on the
real line (see Brown \cite{Br}). This means that the measures $P_{\theta }$
are absolutely continuous w.r.t. a $\sigma $-finite measure $\mu (dx)$ with
densities (in the canonical form)
\begin{equation}
p(x,\theta )=\frac{P_{\theta }(dx)}{\mu (dx)}=\exp \left( \theta
U(x)-V(\theta )\right) ,\quad x\in X,\quad \theta \in \Theta ,
\label{EX-EXP-1}
\end{equation}
where the measurable function $U(x):X\rightarrow \mathrm{R}$ is a sufficient
statistic in the experiment $\mathcal{E}$ and
\begin{equation*}
V(\theta )=\log \int_{X}\exp \left( \theta U(x)\right) \mu (dx)
\end{equation*}
is the logarithm of the Laplace transformation of $U(x).$ It is easy to see
that regularity conditions (R1-R3) and (G1-G2) are satisfied, if we only
assume that $0<c_{1}\leq V^{\prime \prime }(\theta )\leq c_{2}<\infty $ and $%
\left| V^{\left( k\right) }(\theta )\right| \leq c_{2},$ for any $k\geq 3$
and two absolute constants $c_{1}$ and $c_{2}$ (for other related conditions
see Grama and Nussbaum \cite{Gr-Nu-2}). We point out that, for all these
models, the preliminary estimator of condition (G1) above can easily be
constructed (see for instance \cite{Gr-Nu-2}).

Now we shall briefly discuss some examples. Note that the parametrizations
in these is different from the canonical one appearing in (\ref{EX-EXP-1}).
We have chosen the natural parametrizations, where an observation $X$ in the
experiment $\mathcal{E}$ has mean $\mu (\theta )=\theta ,$ since this
facilitates computation of the function $\Gamma (\theta ).$

\emph{Gaussian scale model. }Assume that we are given a sequence of normal
observations $X_{1},...,X_{n}$ with mean $0$ and variance $f(i/n),$ where
the function $f(t),$ $t\in \lbrack 0,1]$ satisfies a H\"{o}lder condition
with exponent $\beta >1/2$ and is such that $c_{1}\leq f(t)\leq c_{2},$ for
some positive absolute constants $c_{1}$ and $c_{2}.$ In this model the
density of the observations has the form $p(x,\theta )=\frac{1}{\sqrt{2\pi }%
\theta }\exp \left( -\frac{x^{2}}{2\theta ^{2}}\right) ,$ $x\in \mathrm{R}$
and the Fisher information is $I(\theta )=2\theta ^{-2}.$ This gives us $%
\Gamma (\theta )=\sqrt{2}\log \theta .$ Then, by Theorem \ref{Theorem R-1},
the model is asymptotically equivalent to the Gaussian model
\begin{equation*}
Y_{i}=\sqrt{2}\log f(i/n)+\varepsilon _{i},
\end{equation*}
where $\varepsilon _{1},...,\varepsilon _{n}$ are i.i.d. standard normal
r.v.'s.

\emph{Poisson model.} Assume that we are given a sequence of Poisson
observations $X_{1},...,X_{n}$ with parameters $f(i/n),$ where the function $%
f(t),$ $t\in \lbrack 0,1]$ satisfies a H\"{o}lder condition with exponent $%
\beta >1/2$ and is such that $c_{1}\leq f(t)\leq c_{2},$ for some positive
absolute constants $c_{1}$ and $c_{2}.$ In this model $p(x,\theta )=\theta
^{x}\exp \left( -\theta \right) ,$ $x\in X=\{0,1,...\}$ and $I(\theta
)=\theta ^{-1}.$ As a consequence $\Gamma (\theta )=2\sqrt{\theta }.$
According to Theorem \ref{Theorem R-1}, the model is asymptotically
equivalent to the Gaussian model
\begin{equation*}
Y_{i}=2\sqrt{f(i/n)}+\varepsilon _{i},
\end{equation*}
where $\varepsilon _{1},...,\varepsilon _{n}$ are i.i.d. standard normal
r.v.'s.

\emph{Binary response model.} Assume that we are given a sequence of
Bernoulli observations $X_{1},...,X_{n}$ taking values $0$ and $1$ with
probabilities $1-f(i/n)$ and $f(i/n)$ respectively, where the function $f(t),
$ $t\in \lbrack 0,1]$ satisfies a H\"{o}lder condition with exponent $\beta
>1/2$ and is such that $c_{1}\leq f(t)\leq c_{2}$ for some absolute
constants $c_{1}>0$ and $c_{2}<1.$ In this model $p(x,\theta )=\theta
^{x}(1-\theta )^{1-x},$ $x\in X=\{0,1\}$ and $I(\theta )=\frac{1}{\theta
(1-\theta )}.$ This yields $\Gamma (\theta )=2\arcsin \sqrt{\theta }.$ By
Theorem \ref{Theorem R-1}, this model is asymptotically equivalent to the
Gaussian model
\begin{equation*}
Y_{i}=2\arcsin \sqrt{f(i/n)}+\varepsilon _{i},
\end{equation*}
where $\varepsilon _{1},...,\varepsilon _{n}$ are i.i.d. standard normal
r.v.'s.

\section{Some nonparametric local asymptotic theory\label{Sect-gen-appr}}

The aim of this section is to state and discuss a condition on the
asymptotic behaviour of the likelihood ratio which can be used for Gaussian
approximation of our nonparametric regression models. This condition will
resemble an LAN-condition, but will be stronger in the sense that it
requires an asymptotic expansion of the log-likelihood ratio in terms of
independent random variables. Although LAN conditions for nonparametric
experiments have been stated and their consequences been developed
extensively, (see for instance Millar \cite{Mill}, Strasser \cite{Str85} and
references therein), the context of this was $n^{-1/2}$-consistent
estimation problems, where an $n^{-1/2}$-localization of the experiments is
useful and implies weak convergence of the sequence $\mathcal{E}^{n}$ to a
limit Gaussian experiment $\mathcal{G}.$ In contrast, we assert the
existence of a version of the experiment $\mathcal{E}^{n}$ on the same
sample space with a suitable Gaussian experiment $\mathcal{G}^{n}$ such that
the Hellinger distance between corresponding measures goes to $0$ at some
rate. The existence of a limit experiment $\mathcal{G}$ is not assumed here:
it is replaced by a suitable sequence of approximating experiments $\mathcal{%
G}^{n}.$

It should be mentioned that the scope of applicability of this theory is
actually larger than exploited here: it can be used to establish asymptotic
equivalence results for regression models with i.i.d. observations, such as
models with random design, and for models with dependent observations.
However these results remain beyond of the scope of the present paper. Here
we restrict ourselves to the model introduced in Section \ref{sect Res}.

\subsection{A bound for the Hellinger distance}

First we shall give sufficient conditions for a rate of convergence to $0$
of the Hellinger distance between the corresponding measures of two
statistical experiments.

Assume that $\mathcal{F}$ is an arbitrary set and that for any $f\in
\mathcal{F}$ we are given a system of sets $\mathcal{F}_{f}\left( r\right) ,$
$r>0,$ to be regarded as a system of neighborhoods of $f.$ Let $r_{n},$ $%
n=1,2,...$ be a sequence of real numbers satisfying $0<r_{n}\leq 1$ and $%
r_{n}\rightarrow 0,$ as $n\rightarrow \infty .$ Let, for any $n=1,2,...\;$%
and any $f\in \mathcal{F},$%
\begin{equation*}
\mathcal{E}_{f}^{n}=\left( X^{n},\mathcal{X}^{n},\left\{ P_{f,h}^{n}:h\in
\mathcal{F}_{f}\left( r_{n}\right) \right\} \right)
\end{equation*}
and
\begin{equation*}
\mathcal{G}_{f}^{n}=\left( X^{n},\mathcal{X}^{n},\left\{ Q_{f,h}^{n}:h\in
\mathcal{F}_{f}\left( r_{n}\right) \right\} \right)
\end{equation*}
be two statistical experiments with the same sample space $\left( X^{n},%
\mathcal{X}^{n}\right) $ and the same parameter space $\mathcal{F}_{f}\left(
r_{n}\right) .$ Assume that the ''local'' experiments $\mathcal{E}_{f}^{n}$
and $\mathcal{G}_{f}^{n}$ have a common ''central'' measure, i.e. that there
is an element $h_{0}=h_{0}\left( f\right) \in \mathcal{F}_{f}\left(
r_{n}\right) $ such that $P_{f,h_{0}}^{n}=Q_{f,h_{0}}^{n}=\mathbf{P}_{f}^{n}$
and $P_{f,h}^{n}\ll P_{f,h_{0}}^{n},$ $Q_{f,h}^{n}\ll Q_{f,h_{0}}^{n},$ for
any $h\in \mathcal{F}_{f}\left( r_{n}\right) .$

\begin{theorem}
\label{Theorem-G-1}Let $\alpha _{1}>\alpha \geq 0.$ Assume that, for some $%
c_{1}>0,$
\begin{equation}
\sup_{f\in \mathcal{F}}\sup_{h\in \mathcal{F}_{f}\left( r_{n}\right) }%
\mathbf{P}_{f}^{n}\left( \left| \log \frac{dP_{f,h}^{n}}{dP_{f,h_{0}}^{n}}%
-\log \frac{dQ_{f,h}^{n}}{dQ_{f,h_{0}}^{n}}\right| \geq c_{1}r_{n}^{\alpha
_{1}}\right) =O\left( r_{n}^{2\alpha _{1}}\right)  \label{CC1}
\end{equation}
and, for any $\varepsilon \in (0,1),$
\begin{equation}
\sup_{f\in \mathcal{F}}\sup_{h\in \mathcal{F}_{f}\left( r_{n}\right)
}P_{f,h}^{n}\left( \log \frac{dP_{f,h}^{n}}{dP_{f,h_{0}}^{n}}>-\varepsilon
\log r_{n}\right) =O\left( r_{n}^{2\alpha _{1}}\right)  \label{CC2}
\end{equation}
and
\begin{equation}
\sup_{f\in \mathcal{F}}\sup_{h\in \mathcal{F}_{f}\left( r_{n}\right)
}Q_{f,h}^{n}\left( \log \frac{dQ_{f,h}^{n}}{dQ_{f,h_{0}}^{n}}>-\varepsilon
\log r_{n}\right) =O\left( r_{n}^{2\alpha _{1}}\right) .  \label{CC3}
\end{equation}
Then there is an $\alpha _{2}>\alpha $ such that
\begin{equation*}
\sup_{f\in \mathcal{F}}\sup_{h\in \mathcal{F}_{f}\left( r_{n}\right)
}H^{2}\left( P_{f,h}^{n},Q_{f,h}^{n}\right) =O\left( r_{n}^{2\alpha
_{2}}\right) .
\end{equation*}
\end{theorem}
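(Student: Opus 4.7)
The plan is to express the squared Hellinger distance as an integral against the common central measure and decompose it according to whether the two log-likelihood ratios are close or not. Writing $L_P=\log(dP_{f,h}^n/dP_{f,h_0}^n)$ and $L_Q=\log(dQ_{f,h}^n/dQ_{f,h_0}^n)$, and using that $P_{f,h_0}^n=Q_{f,h_0}^n=\mathbf{P}_f^n$, we have
\begin{equation*}
H^2(P_{f,h}^n,Q_{f,h}^n)=\int\bigl(e^{L_P/2}-e^{L_Q/2}\bigr)^2\,d\mathbf{P}_f^n.
\end{equation*}
Let $A_n=\{|L_P-L_Q|\geq c_1 r_n^{\alpha_1}\}$. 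I would split the integral over $A_n^c$ and $A_n$ and estimate each piece separately.

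On $A_n^c$, the elementary inequality $(e^{a/2}-e^{b/2})^2\leq \tfrac{1}{4}(a-b)^2(e^a+e^b)$ (valid for $|a-b|$ bounded, which holds here since $r_n\to 0$) yields
\begin{equation*}
\int_{A_n^c}\bigl(e^{L_P/2}-e^{L_Q/2}\bigr)^2\,d\mathbf{P}_f^n\leq \tfrac{1}{4}c_1^2 r_n^{2\alpha_1}\!\int\!(e^{L_P}+e^{L_Q})\,d\mathbf{P}_f^n=\tfrac{1}{2}c_1^2 r_n^{2\alpha_1},
\end{equation*}
since the last integral equals $P_{f,h}^n(X^n)+Q_{f,h}^n(X^n)=2$. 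On $A_n$, using $(e^{L_P/2}-e^{L_Q/2})^2\leq 2(e^{L_P}+e^{L_Q})$, I obtain
\begin{equation*}
\int_{A_n}\bigl(e^{L_P/2}-e^{L_Q/2}\bigr)^2\,d\mathbf{P}_f^n\leq 2\bigl(P_{f,h}^n(A_n)+Q_{f,h}^n(A_n)\bigr),
\end{equation*}
so everything reduces to controlling $P_{f,h}^n(A_n)$ and $Q_{f,h}^n(A_n)$.

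The main obstacle is precisely this transfer: condition (CC1) gives the bound $O(r_n^{2\alpha_1})$ only under $\mathbf{P}_f^n$, not under the shifted measures. To bridge this gap I use (CC2) and (CC3) as follows. Split $A_n$ using the level set $B_n=\{L_P\leq -\varepsilon\log r_n\}$. On $A_n\cap B_n$ the density $e^{L_P}$ is at most $r_n^{-\varepsilon}$, so
\begin{equation*}
P_{f,h}^n(A_n\cap B_n)=\int_{A_n\cap B_n}e^{L_P}\,d\mathbf{P}_f^n\leq r_n^{-\varepsilon}\mathbf{P}_f^n(A_n)=O\bigl(r_n^{2\alpha_1-\varepsilon}\bigr),
\end{equation*}
by (CC1). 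On $A_n\cap B_n^c$ one has $L_P>-\varepsilon\log r_n$, hence $P_{f,h}^n(A_n\cap B_n^c)\leq P_{f,h}^n(L_P>-\varepsilon\log r_n)=O(r_n^{2\alpha_1})$ by (CC2). The same argument with $L_Q$ in place of $L_P$ and (CC3) in place of (CC2) yields $Q_{f,h}^n(A_n)=O(r_n^{2\alpha_1-\varepsilon})$.

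Combining the three bounds, $H^2(P_{f,h}^n,Q_{f,h}^n)=O(r_n^{2\alpha_1-\varepsilon})$ uniformly in $f\in\mathcal F$ and $h\in\mathcal F_f(r_n)$. Since $\alpha_1>\alpha$, I am free to choose $\varepsilon\in(0,2(\alpha_1-\alpha))$, so that setting $\alpha_2=\alpha_1-\varepsilon/2$ gives $\alpha_2>\alpha$ and completes the proof. The only nonroutine ingredient is the device of splitting at the logarithmic level $-\varepsilon\log r_n$, which is exactly what (CC2)--(CC3) are tailored for; everything else is elementary analysis of the exponential function.
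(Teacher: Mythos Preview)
Your proof is correct and follows essentially the same strategy as the paper's: split according to whether $|L_P-L_Q|$ is small, handle the good set directly, and on the bad set change measure to $P_{f,h}^n$ (resp.\ $Q_{f,h}^n$) with a further split at the level $\{L_P\le -\varepsilon\log r_n\}$ so that (\ref{CC1}) and (\ref{CC2})--(\ref{CC3}) each control one piece. The only cosmetic differences are that the paper's $A_n$ is your $A_n^c$, and on the good set the paper writes $(\sqrt{L^1/L^2}-1)^2\,dQ$ and bounds $(\exp(\tfrac12 r_n^{\alpha_1})-1)^2$ directly, whereas you use the mean-value estimate $(e^{a/2}-e^{b/2})^2\le \tfrac14(a-b)^2(e^a+e^b)$ (which in fact holds for all $a,b$, so your ``$|a-b|$ bounded'' caveat is unnecessary).
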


\begin{proof}
Set, for brevity
\begin{equation*}
L_{f,h}^{1,n}=\frac{dP_{f,h}^n}{dP_{f,h_0}^n},\quad \quad L_{f,h}^{2,n}=%
\frac{dQ_{f,h}^n}{dQ_{f,h_0}^n},\quad \quad \Psi _n=\sqrt{L_{f,h}^{1,n}}-%
\sqrt{L_{f,h}^{2,n}}.
\end{equation*}
Consider  the set
\begin{equation*}
A_n=\left\{ \left| \log L_{f,h}^{1,n}-\log L_{f,h}^{1,n}\right| \leq
r_n^{\alpha _1}\right\} .
\end{equation*}
With these notations, by the definition of the Hellinger distance [see (\ref
{Hell-dist})], we have
\begin{equation}
H^2\left( P_{f,h}^n,Q_{f,h}^n\right) =\frac 12\mathbf{E}_f^n\Psi _n^2=\frac
12\mathbf{E}_f^n\mathbf{1}_{A_n}\Psi _n^2+\frac 12\mathbf{E}_f^n\mathbf{1}%
_{A_n^c}\Psi _n^2.  \label{hd-1}
\end{equation}
Changing  measure in the first expectation in the right hand side of (\ref
{hd-1}), we write
\begin{eqnarray}
\mathbf{E}_f^n\mathbf{1}_{A_n}\Psi _n^2 &=&\int_{\Omega ^n}\mathbf{1}%
_{A_n}\left( \sqrt{L_{f,h}^{1,n}/L_{f,h}^{2,n}}-1\right) ^2dQ_{f,h}^n  \notag
\\
&=&\int_{\Omega ^n}\mathbf{1}_{A_n}\left( \exp \left( \frac 12\log
L_{f,h}^{1,n}-\frac 12\log L_{f,h}^{2,n}\right) -1\right) ^2dQ_{f,h}^n
\notag \\
&\leq &\left( \exp \left( \frac 12r_n^{\alpha _1}\right) -1\right)
^2=O\left( r_n^{2\alpha _1}\right) .  \label{hd-2}
\end{eqnarray}
An application of the elementary inequality $\left( a+b\right) ^2\leq
2a^2+2b^2$ gives the bound for the second expectation in the right hand
side of (\ref{hd-1}):
\begin{equation}
\mathbf{E}_f^n\mathbf{1}_{A_n^c}\Psi _n^2\leq 2\mathbf{E}_f^n\mathbf{1}%
_{A_n^c}L_{f,h}^{1,n}+2\mathbf{E}_f^n\mathbf{1}_{A_n^c}L_{f,h}^{2,n}.
\label{hd-3}
\end{equation}
We proceed to estimate $\mathbf{E}_f^n\mathbf{1}_{A_n^c}L_{f,h}^{1,n}.$
Setting $B_n=\left\{ \log L_{f,h}^{1,n}\leq -\delta \log r_n\right\} ,$
where $\delta >0$ will be specified below, one gets
\begin{eqnarray}
\mathbf{E}_f^n\mathbf{1}_{A_n^c}L_{f,h}^{1,n} &=&\mathbf{E}_f^n\mathbf{1}%
_{A_n^c\cap B_n}L_{f,h}^{1,n}+\mathbf{E}_f^n\mathbf{1}_{A_n^c\cap
B_n^c}L_{f,h}^{1,n}  \notag \\
&\leq &r_n^{-\delta }\mathbf{P}_f^n\left( A_n^c\right) +\mathbf{E}_f^n%
\mathbf{1}_{B_n^c}L_{f,h}^{1,n}.  \label{hd-4}
\end{eqnarray}
Choosing$\ \delta $ small such that $\alpha _2=\alpha _1-\delta >\alpha ,$ we
get,
\begin{equation}
\sup_{f,h}n^\delta \mathbf{P}_f^n\left( A_n^c\right) =O\left( r_n^{\alpha
_1-\delta }\right) =O\left( r_n^{\alpha _2}\right) .  \label{hd-5}
\end{equation}
The second term on the right-hand side of (\ref{hd-4}) can be written as
\begin{eqnarray}
\mathbf{E}_f^n\mathbf{1}_{B_n^c}L_{f,h}^{1,n} &=&\mathbf{E}_f^n\left( \log
L_{f,h}^{1,n}>-\delta \log r_n\right) L_{f,h}^{1,n}  \notag \\
&=&P_{f,h}^n\left( \log L_{f,h}^{1,n}>-\delta \log r_n\right) =O\left(
r_n^{\alpha _1}\right) .  \label{hd-6-1}
\end{eqnarray}
Inserting  (\ref{hd-5}) and (\ref{hd-6-1}) in (\ref{hd-4}) we get
\begin{equation}
\mathbf{E}_f^n\mathbf{1}_{A_n^c}L_{f,h}^{1,n}=O\left( r_n^{\alpha _2}\right)
.  \label{hd-7}
\end{equation}
An estimate for the second term on the right-hand side of (\ref{hd-3}) is
proved in exactly the same way. This gives $\mathbf{E}_f^n\mathbf{1}%
_{A_n^c}\Psi _n^2=O\left( r_n^{\alpha _2}\right) ,$ which in turn, in conjunction
with (\ref{hd-2}) and (\ref{hd-1}), concludes the proof of Theorem \ref
{Theorem-G-1}.
\end{proof}

\subsection{Nonparametric experiments which admit a locally asymptotic
stochastic expansion}

We shall show that the assumptions in Theorem \ref{Theorem-G-1} are
satisfied if the log-likelihood ratio in the experiment $\mathcal{E}^{n}$
admits a certain stochastic expansion in terms of independent random
variables.

Let $T=[0,1]$ and $\mathcal{F}\subset \Theta ^{T}$ be some set of functions $%
f\left( t\right) :T\rightarrow \Theta .$ Let $\mathcal{E}^{n},$ $n\geq 1$ be
a sequence of statistical experiments
\begin{equation*}
\mathcal{E}^{n}=\left( \Omega ^{n},\mathcal{A}^{n},\left\{ P_{f}^{n}:f\in
\mathcal{F}\right\} \right) ,
\end{equation*}
with parameter set $\mathcal{F}.$ For simplicity we assume that, for any $%
n=1,2,...$ the measures $P_{f}^{n}:f\in \mathcal{F}$ in the experiment $%
\mathcal{E}^{n}$ are equivalent, i.e. that $P_{f}^{n}\ll P_{g}^{n},$ for any
$f,g\in \mathcal{F}.$ Recall that $\mathcal{H}(\beta ,L)$ is a H\"{o}lder
ball of functions defined on $T$ with values in $\Theta .$ The parameters $%
\beta $ and $L$ are assumed to be absolute constants satisfying $\beta >1/2$
and $0<L<\infty .$ It will be convenient to define the neighborhoods of $%
f\in \mathcal{F}$ (shifted to the origin) as follows: for any non-negative
real number $r,$
\begin{equation*}
\mathcal{F}_{f}(r)=\{rh:h\in \mathcal{H}(\beta ,L),\;f+rh\in \mathcal{F}\}.
\end{equation*}

For stating our definition this we need the following objects:

\begin{description}
\item[E1]  A sequence of real numbers $r_{n},$ $n=1,2,...$ which satisfies $%
r_{n}\rightarrow 0,$ as $n\rightarrow \infty .$

\item[E2]  A function $I\left( \theta \right) :\Theta \rightarrow (0,\infty
),$ which will play the role of the Fisher information in the experiment $%
\mathcal{E}^{n}.$

\item[E3]  The triangular array of design points $t_{ni}=i/n,$ $i=1,...,n,$ $%
n\geq 1,$ on the interval $T=[0,1]$.
\end{description}

\begin{definition}
\label{Def LAN}The sequence of experiments $\mathcal{E}^{n},$ $n\geq 1$ is
said to satisfy condition LASE with rate $r_{n}$ and local Fisher
information function $I\left( \cdot \right) ,$ if, for any fixed $n\geq 1$
and any fixed $f\in \mathcal{F},$ on the probability space $(\Omega ^{n},%
\mathcal{A}^{n},P_{f}^{n})$ there is a sequence of independent r.v.'s $\xi
_{ni}\left( f\right) ,$ $i=1,...,n$ of mean $0$ and variances
\begin{equation*}
E_{f}^{n}\xi _{ni}^{2}\left( f\right) =I\left( f\left( t_{ni}\right) \right)
,\quad i=1,...,n,
\end{equation*}
such that the expansion
\begin{equation*}
\log \frac{dP_{f+h}^{n}}{dP_{f}^{n}}=\sum_{i=1}^{n}h\left( t_{ni}\right) \xi
_{ni}\left( f\right) -\frac{1}{2}\sum_{i=1}^{n}h\left( t_{ni}\right)
^{2}I\left( f\left( t_{ni}\right) \right) +\rho _{n}\left( f,h\right) ,
\end{equation*}
holds true for any $h\in \mathcal{F}_{f}(r_{n}),$ where the remainder $\rho
_{n}\left( f,h\right) $ satisfies
\begin{equation*}
P_{f_{n}}^{n}\left( \left| \rho _{n}\left( f_{n},h_{n}\right) \right|
>a\right) \rightarrow 0,
\end{equation*}
for any two fixed sequences $\left( f_{n}\right) _{n\geq 1}$ and $\left(
h_{n}\right) _{n\geq 1}$ subject to $f_{n}\in \mathcal{F},$ $h_{n}\in
\mathcal{F}_{f_{n}}(r_{n})$ and any real $a>0,$ as $n\rightarrow \infty .$
\end{definition}

In the sequel we shall impose conditions (C1-C4) as formulated below.

\begin{description}
\item[C1]  The sequence $r_{n},$ $n=1,2,...$ has the parametric rate, i.e.
is so that
\begin{equation*}
r_{n}=c\frac{1}{\sqrt{n}}.
\end{equation*}

\item[C2]  The remainder $\rho _{n}\left( f,h\right) $ in the definition of
condition LASE converges to $0$ at a certain rate: for some ${\alpha }\in (%
\frac{1}{2{\beta }},1)$ and any $\varepsilon >0,$
\begin{equation*}
\sup_{f}\sup_{h}P_{f_{n}}^{n}\left( \left| \rho _{n}\left(
f_{n},h_{n}\right) \right| \geq \varepsilon n^{-\alpha /2}\right) =O\left(
n^{-\alpha }\right) ,
\end{equation*}
where $\sup $ is taken over all possible $f\in \mathcal{F}$ and $h\in
\mathcal{F}_{f}(r_{n}).$

\item[C3]  The r.v.'s $\xi _{ni}\left( f\right) ,$ $i=1,...,n$ in the
definition of condition LASE satisfy a strengthened version of the Lindeberg
condition: for some ${\alpha }\in (\frac{1}{2{\beta }},1)$ and any $%
\varepsilon >0,$%
\begin{equation*}
\sup_{f\in \mathcal{F}}\frac{1}{n}\sum_{i=1}^{n}E_{f}^{n}(n^{\alpha /2}\xi
_{ni}\left( f\right) )^{2}\mathbf{1}\left( \left| n^{\alpha /2}\xi
_{ni}\left( f\right) \right| \geq \varepsilon \sqrt{n}\right) =O\left(
n^{-\alpha }\right) .
\end{equation*}

\item[C4]  For $n=1,2,...$ the local Fisher information function $I\left(
\theta \right) $ satisfies
\begin{equation*}
0<I_{\min }\leq I\left( \theta \right) \leq I_{\max }<\infty ,\quad \theta
\in \Theta .
\end{equation*}
\end{description}

Let $\mathcal{E}_{f}^{n}$ be the local experiment
\begin{equation*}
\mathcal{E}_{f}^{n}=\left( \Omega ^{n},\mathcal{A}^{n},\left\{
P_{f+h}^{n}:h\in \mathcal{F}_{f}(r_{n})\right\} \right)
\end{equation*}
and let $f_{n},$ $n\geq 1$ denote any sequence of functions in $\mathcal{F}.$
Let $H(\cdot ,\cdot )$ be the Hellinger distance between probability
measures, see (\ref{Hell-dist}). The next theorem states that, under a
strengthened version of condition LASE, the sequence of local experiments $%
\mathcal{E}_{f_{n}}^{n},$ $n\geq 1$ can be approximated by a sequence of
Gaussian shift experiments uniformly in all sequences $f_{n},$ $n\geq 1,$
using the distance $H(\cdot ,\cdot ).$

\begin{theorem}
\label{Theorem LAQ-1}Assume that the sequence of experiments $\mathcal{E}%
^{n},$ $n=1,2,...$ satisfies condition LASE with rate $r_{n}$ and local
Fisher information function $I\left( \theta \right) $ and conditions (C1-C4)
hold true. Let $\varepsilon _{i},$ $i=1,2,...$be a sequence of i.i.d.
standard normal r.v.'s defined on a probability space $\left( \Omega ^{0},%
\mathcal{A}^{0},\mathbf{P}\right) $. Let, for any fixed $n\geq 1$ and fixed $%
f\in \Sigma ,$%
\begin{equation*}
\mathcal{G}_{f}^{n}=\left( \mathrm{R}^{n},\mathcal{B}^{n},\left\{
Q_{f,h}^{n}:h\in \mathcal{F}_{f}(r_{n})\right\} \right)
\end{equation*}
be the Gaussian shift experiment generated by $n$ observations
\begin{equation*}
Y_{i}^{n}=h\left( i/n\right) +\frac{1}{\sqrt{I\left( f\left( i/n\right)
\right) }}\varepsilon _{i},\quad i=1,...,n,
\end{equation*}
with $h\in \mathcal{F}_{f}(r_{n}).$ Then, for any fixed $n\geq 1$ and $f\in
\mathcal{F},$ there are experiments
\begin{eqnarray*}
\widetilde{\mathcal{G}}_{f}^{n} &=&\left( \Omega ^{0},\mathcal{A}%
^{0},\left\{ \widetilde{Q}_{f,h}^{n}:h\in \mathcal{F}_{f}(r_{n})\right\}
\right) , \\
\widetilde{\mathcal{E}}_{f}^{n} &=&\left( \Omega ^{0},\mathcal{A}%
^{0},\left\{ \widetilde{P}_{f,h}^{n}:h\in \mathcal{F}_{f}(r_{n})\right\}
\right)
\end{eqnarray*}
such that
\begin{equation*}
\Delta \left( \mathcal{G}_{f}^{n},\widetilde{\mathcal{G}}_{f}^{n}\right)
=\Delta \left( \mathcal{E}_{f}^{n},\widetilde{\mathcal{E}}_{f}^{n}\right) =0
\end{equation*}
and for some $\alpha \in \left( 1/2\beta ,1\right) ,$%
\begin{equation*}
\sup_{f\in \mathcal{F}}\sup_{h\in \mathcal{F}_{f}(r_{n})}H^{2}\left(
\widetilde{P}_{f,h}^{n},\widetilde{Q}_{f,h}^{n}\right) =O\left( r_{n}^{{%
2\alpha }}\right) ,
\end{equation*}
as $n\rightarrow \infty .$
\end{theorem}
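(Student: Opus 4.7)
The plan is to construct versions of the two experiments on a common probability space by coupling, then to verify the hypotheses of Theorem \ref{Theorem-G-1}. Fix $n$ and $f\in\mathcal{F}$ and take $h_0=0$ as the central parameter, so that I aim for $\mathbf{P}_f^n=\widetilde{P}_{f,0}^n=\widetilde{Q}_{f,0}^n$. Under LASE, the log-likelihood ratio of $\mathcal{E}_f^n$ is
\begin{equation*}
\log\frac{dP_{f+h}^n}{dP_f^n}=\sum_{i=1}^n h(t_{ni})\xi_{ni}(f)-\frac{1}{2}\sum_{i=1}^n h(t_{ni})^2 I(f(t_{ni}))+\rho_n(f,h),
\end{equation*}
while the Gaussian shift yields
\begin{equation*}
\log\frac{dQ_{f,h}^n}{dQ_{f,0}^n}=\sum_{i=1}^n h(t_{ni})\sqrt{I(f(t_{ni}))}\,\varepsilon_i-\frac{1}{2}\sum_{i=1}^n h(t_{ni})^2 I(f(t_{ni})).
\end{equation*}
The two quadratic compensators cancel, and the difference of log-likelihood ratios equals $\sum_{i=1}^n h(t_{ni})W_{ni}(f)+\rho_n(f,h)$ with $W_{ni}(f)=\xi_{ni}(f)-\sqrt{I(f(t_{ni}))}\,\varepsilon_i$. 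The goal of the proof is to make this difference $o(r_n^{\alpha_1})$ in probability uniformly, for some $\alpha_1>\alpha$.

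The key step is to build the coupling via the functional Hungarian-type approximation of Theorem \ref{THEOREM-HUNG-CONSTR}. On the probability space $(\Omega^0,\mathcal{A}^0,\mathbf{P})$ carrying $(\varepsilon_i)$ I will produce a version of the independent array $(\xi_{ni}(f))$, with the same joint law as under $P_f^n$, so that the partial sum process $S_k(f)=\sum_{i=1}^k W_{ni}(f)$ is close to zero uniformly: $\max_{k\le n}|S_k(f)|$ is of a logarithmic order (the exact rate depending on the moments furnished by (C3)). Summation by parts,
\begin{equation*}
\sum_{i=1}^n h(t_{ni})W_{ni}(f)=h(t_{nn})S_n(f)-\sum_{i=1}^{n-1}\bigl(h(t_{n,i+1})-h(t_{ni})\bigr)S_i(f),
\end{equation*}
combined with the fact that $h\in\mathcal{F}_f(r_n)=r_n\mathcal{H}(\beta,L)$ satisfies $\|h\|_\infty\le r_n L$ and $|h(t_{n,i+1})-h(t_{ni})|=O(r_n n^{-(1\wedge\beta)})$, then gives a bound of order $r_n\cdot\max_k|S_k(f)|$. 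Together with the remainder bound in (C2), this makes the difference of log-likelihoods of order $n^{-\alpha_1/2}=r_n^{\alpha_1}$ for some $\alpha_1$ strictly greater than the $\alpha$ we ultimately retain.

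With this in hand, I verify the three hypotheses of Theorem \ref{Theorem-G-1}. Condition (CC1) is exactly what the previous paragraph supplies. Condition (CC3) is routine: under $Q_{f,h}^n$, the Gaussian log-likelihood ratio is normally distributed with mean of order $n\|h\|_\infty^2 I_{\max}=O(1)$ and variance of the same order (by (C1) and (C4)), so a Gaussian tail estimate defeats any polynomial rate $r_n^{2\alpha_1}$. Condition (CC2) is reduced by the LASE expansion and (C3) to a tail bound on $\sum_i h(t_{ni})\xi_{ni}(f)$ under $P_{f+h}^n$; this follows by a change of measure and a Lindeberg-type tail estimate using the moment control in (C3), noting that $-\varepsilon\log r_n=(\varepsilon/2)\log n\to\infty$. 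Invoking Theorem \ref{Theorem-G-1} with an $\alpha_1\in(\alpha,1)$ close to $\alpha$, and taking its internal $\delta$ small, yields an $\alpha_2>\alpha$ with $\sup_f\sup_h H^2(\widetilde{P}_{f,h}^n,\widetilde{Q}_{f,h}^n)=O(r_n^{2\alpha_2})$, which (relabelling the exponent) is the claim.

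The principal obstacle is Step 2: the Hungarian coupling must be performed uniformly in the functional parameter $f$, since the variances $I(f(t_{ni}))$ and, through $\xi_{ni}(f)$, the whole joint distribution of the triangular array, depend on $f$. The Hölder loss of order $r_n$ that I can afford after summation by parts is modest, so the coupling error itself must be controlled to within a power of $\log n$; this is precisely the content of the functional KMT approximation of \cite{Gr-Nu-1} as restated in Theorem \ref{THEOREM-HUNG-CONSTR}. Once the coupling is available with the required uniformity, everything else is a mechanical application of Theorem \ref{Theorem-G-1}.
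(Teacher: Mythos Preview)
Your overall architecture is right—construct coupled versions, then verify (CC1)–(CC3) of Theorem~\ref{Theorem-G-1}—but the proof as written has two genuine gaps.

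First, you skip the truncation of the scores. The functional KMT (Theorem~\ref{THEOREM-HUNG-CONSTR}) requires Sakhanenko's exponential moment condition $\lambda_n E'|X_i|^3\exp(\lambda_n|X_i|)\le E'X_i^2$. Condition~(C3) is only a strengthened Lindeberg condition and does not imply any exponential moments, so you cannot apply the KMT approximation to $\xi_{ni}(f)$ directly. The paper first replaces $\xi_{ni}$ by bounded surrogates $\xi_{ni}^\ast$ with $|r_n^{1-\alpha}\xi_{ni}^\ast|\le c$, preserving means and variances (Lemma~\ref{Lemma-modif-exp}); this is exactly what makes Sakhanenko's condition hold with $\lambda_n=r_n^{1-1/(2\beta)}$. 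The same boundedness is also what drives the verification of (CC2): Lemma~\ref{Lemma moder-P} uses the exponential inequality for bounded r.v.'s (Lemma~\ref{Lemma APX-1}), and a ``Lindeberg-type tail estimate'' from (C3) alone would not give the required $O(r_n^{2\alpha_1})$ bound on a probability that a sum exceeds $-\varepsilon\log r_n$.

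Second, the summation-by-parts step does not deliver what you claim for $\beta\in(1/2,1)$. The total variation of $h=r_ng$, $g\in\mathcal{H}(\beta,L)$, over the grid is $\sum_i|h(t_{n,i+1})-h(t_{ni})|=O(r_n n^{1-\beta})$, not $O(r_n)$; so the bound you obtain is $O(r_n n^{1-\beta}\max_k|S_k|)$, which is too large near $\beta=1/2$. The paper avoids this by applying the \emph{functional} KMT of Theorem~\ref{THEOREM-HUNG-CONSTR} directly to the indexed sum $\sum_i h(t_{ni})(\widetilde{\xi}_{ni}-\zeta_{ni})$, using that $r_n^{-1}h\in\mathcal{H}(\beta,L)\subset\mathcal{H}(1/2,L)$; this yields the uniform bound (\ref{c-d-hung}) with no summation by parts.
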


We give here some hints how to carry out the proof of Theorem \ref{Theorem
LAQ-1}. Starting with the independent standard Gaussian sequence $%
\varepsilon _{i},$ $i=1,2,...$, we construct a sequence $\widetilde{\xi }%
_{n1},...,\widetilde{\xi }_{nn},\widetilde{\rho }_{n}\left( f,h\right) $
with the same joint distribution as $\xi _{n1},...,\xi _{nn},\rho _{n}\left(
f,h\right) $ from the expansion for the likelihood $%
L_{f,h}^{1,n}=dP_{f+h}^{n}/dP_{f}^{n}.$ This will ensure that the ''new''
likelihood $\widetilde{L}_{f,h}^{1,n}$, as a process indexed by $h\in
\mathcal{F}_{f}(r_{n})$, has the same law as $L_{f,h}^{1,n},$ and thus, that
the corresponding experiments are exactly equivalent. The key point in this
construction is to guarantee that the two length $n$ sequences $%
I^{1/2}\left( f\left( i/n\right) \right) $ $\varepsilon _{i},$ $i=1,...,n$
and $\widetilde{\xi }_{ni},$ $i=1,...,n$ are as close as possible. For this
we make use of a strong approximation result for partial sums of independent
r.v.'s indexed by functions, provided by Theorem \ref{THEOREM-HUNG-CONSTR}
(see the Appendix). We note also that the new remainder $\widetilde{\rho }%
_{n}\left( f,h\right) $ will satisfy the same requirements as $\rho
_{n}\left( f,h\right) $ does, since both are equally distributed.

\subsection{Asymptotic expansion with bounded scores.}

Assume that the sequence of experiments $\mathcal{E}^{n},$ $n=1,2,...$
satisfies condition LASE. This means that, for $f,$ $h$ satisfying $f\in
\mathcal{F}$ and $h\in \mathcal{F}_{f}(r_{n}),$
\begin{equation}
\log \frac{dP_{f+h}^{n}}{dP_{f}^{n}}=\sum_{i=1}^{n}h\left( t_{ni}\right) \xi
_{ni}-\frac{1}{2}\sum_{i=1}^{n}h\left( t_{ni}\right) ^{2}I\left( f\left(
t_{ni}\right) \right) +\rho _{n}\left( f,h\right) ,  \label{c-a-2}
\end{equation}
where $\xi _{ni}=\xi _{ni}(f),$ $i=1,...,n$ is a sequence of independent
r.v.'s of mean $0$ and variances $\text{E}_{f}^{n}\xi _{ni}^{2}=I\left(
f\left( t_{ni}\right) \right) .$ If the model is of location type
\begin{equation*}
X_{i}=f\left( i/n\right) +\eta _{i},\quad i=1,...,n,
\end{equation*}
where the noise $\eta _{i}$ has density $p(x)$ then $\xi _{ni}$ in (\ref
{c-a-2}) stands for $l(\eta _{i})$ where $l(x)=p^{\prime }(x)/p(x)$ is often
called the score function. In a somewhat loose terminology, the r.v.'s $\xi
_{ni}$ may therefore be called ''scores''. We shall show that, under the
conditions (C1-C4), the above expansion can be modified so that the r.v.'s $%
\xi _{ni}$ are replaced by some bounded r.v.'s $\xi _{ni}^{\ast }$ with the
same mean and variances. More precisely, we prove the following.

\begin{lemma}
\label{Lemma-modif-exp}Let conditions (C1-C4) hold true. Then there is a
sequence of independent r.v. $\xi _{ni}^{\ast }\left( f\right) ,$ $i=1,...,n$
of means $0$ and variances $E_{f}^{n}\xi _{ni}^{\ast }\left( f\right)
^{2}=I\left( f\left( t_{ni}\right) \right) ,$ $i=1,...,n,$ satisfying
\begin{equation}
\left| r_{n}^{1-\alpha }\xi _{ni}^{\ast }\left( f\right) \right| \leq
c,\quad i=1,...,n  \label{c-b-1}
\end{equation}
for some real number $\alpha \in \left( 1/2\beta ,1\right) ,$ and such that
\begin{equation}
\log \frac{dP_{f+h}^{n}}{dP_{f}^{n}}=\sum_{i=1}^{n}h\left( t_{ni}\right) \xi
_{ni}^{\ast }\left( f\right) -\frac{1}{2}\sum_{i=1}^{n}h\left( t_{ni}\right)
^{2}I\left( f\left( t_{ni}\right) \right) +\rho _{n}^{\ast }\left(
f,h\right) ,  \label{c-b-2}
\end{equation}
where, for any $\varepsilon >0,$
\begin{equation}
\sup_{f\in \mathcal{F}}\sup_{h\in \mathcal{F}_{f}\left( r_{n}\right)
}P_{f}^{n}\left( \left| \rho _{n}^{\ast }\left( f,h\right) \right| \geq
\varepsilon r_{n}^{\alpha }\right) =O\left( r_{n}^{2\alpha }\right) .
\label{c-b-2-1}
\end{equation}
\end{lemma}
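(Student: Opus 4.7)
The plan is to obtain $\xi_{ni}^{\ast}(f)$ by truncating the original score $\xi_{ni}(f)$ at a level that exactly matches the Lindeberg threshold in (C3), and then repairing mean and variance by an independent bounded augmenting variable. Set $c_{n}=c_{0}n^{(1-\alpha)/2}\asymp r_{n}^{-(1-\alpha)}$ with $c_{0}$ chosen so that the event $\{|\xi_{ni}|>c_{n}\}$ is the one controlled by (C3). Let
\[
\widetilde{\xi}_{ni}=\xi_{ni}(f)\mathbf{1}\{|\xi_{ni}(f)|\leq c_{n}\},\qquad \mu_{ni}=\mathrm{E}_{f}^{n}\widetilde{\xi}_{ni},\qquad \sigma_{ni}^{2}=\mathrm{Var}_{f}^{n}\widetilde{\xi}_{ni},
\]
and enlarge $(\Omega^{n},\mathcal{A}^{n},P_{f}^{n})$ by an independent sequence $Y_{ni}$ of centered two-point variables supported in $[-\nu_{ni},\nu_{ni}]$ with $\mathrm{Var}\,Y_{ni}=\nu_{ni}^{2}:=I(f(t_{ni}))-\sigma_{ni}^{2}$. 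Since $\nu_{ni}^{2}=\mathrm{E}_{f}^{n}[\xi_{ni}^{2}\mathbf{1}\{|\xi_{ni}|>c_{n}\}]+\mu_{ni}^{2}\geq 0$, this is well defined. Finally put
\[
\xi_{ni}^{\ast}(f)=\widetilde{\xi}_{ni}-\mu_{ni}+Y_{ni}.
\]
By construction $\mathrm{E}_{f}^{n}\xi_{ni}^{\ast}=0$, $\mathrm{Var}_{f}^{n}\xi_{ni}^{\ast}=I(f(t_{ni}))$, and $|\xi_{ni}^{\ast}|\leq c_{n}+|\mu_{ni}|+\nu_{ni}\leq 2c_{n}$ for $n$ large, which yields (\ref{c-b-1}). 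Equation (\ref{c-b-2}) is then obtained by \emph{defining} the new remainder via $\rho_{n}^{\ast}(f,h)=\rho_{n}(f,h)+\sum_{i}h(t_{ni})(\xi_{ni}^{\ast}-\xi_{ni})$.

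Next, the plan is to bound $\rho_{n}^{\ast}-\rho_{n}$, which decomposes as $-R_{0}-R_{1}+R_{2}$, with
\[
R_{0}=\sum_{i}h(t_{ni})\mu_{ni},\quad R_{1}=\sum_{i}h(t_{ni})\xi_{ni}\mathbf{1}\{|\xi_{ni}|>c_{n}\},\quad R_{2}=\sum_{i}h(t_{ni})Y_{ni}.
\]
Since $h\in\mathcal{F}_{f}(r_{n})$ implies $\|h\|_{\infty}\leq Lr_{n}$, from $|\mu_{ni}|\leq c_{n}^{-1}\mathrm{E}_{f}^{n}[\xi_{ni}^{2}\mathbf{1}\{|\xi_{ni}|>c_{n}\}]$ and (C3) one gets $|R_{0}|=O(r_{n}c_{n}^{-1}n^{1-2\alpha})=O(n^{-3\alpha/2})=o(r_{n}^{\alpha})$ deterministically. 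For $R_{1}$, by (C3) and Chebyshev one has $P_{f}^{n}(R_{1}\neq 0)\leq \sum_{i}P_{f}^{n}(|\xi_{ni}|>c_{n})\leq c_{n}^{-2}\sum_{i}\mathrm{E}_{f}^{n}[\xi_{ni}^{2}\mathbf{1}\{|\xi_{ni}|>c_{n}\}]=O(n^{-\alpha})=O(r_{n}^{2\alpha})$. For $R_{2}$, independence and the bound $\sum_{i}\nu_{ni}^{2}=O(n^{1-2\alpha})$ give $\mathrm{Var}\,R_{2}=O(r_{n}^{2}n^{1-2\alpha})=O(r_{n}^{4\alpha})$, so $P_{f}^{n}(|R_{2}|\geq\varepsilon r_{n}^{\alpha})=O(r_{n}^{2\alpha})$ by Chebyshev.

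Combining the three estimates with (C2) and the triangle inequality yields
\[
\sup_{f\in\mathcal{F}}\sup_{h\in\mathcal{F}_{f}(r_{n})}P_{f}^{n}\bigl(|\rho_{n}^{\ast}(f,h)|\geq \varepsilon r_{n}^{\alpha}\bigr)=O(r_{n}^{2\alpha}),
\]
which is (\ref{c-b-2-1}); uniformity in $f,h$ is inherited from the uniformity in (C2)–(C3).

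The main obstacle will be bookkeeping: one must match the truncation scale to the Lindeberg threshold so that (C3) delivers the required $O(n^{1-2\alpha})$ bound on $\sum_{i}\mathrm{E}_{f}^{n}[\xi_{ni}^{2}\mathbf{1}\{|\xi_{ni}|>c_{n}\}]$, then propagate this single estimate through the bounds for $R_{0}$, $R_{1}$, $R_{2}$ so that every resulting error sits below the prescribed thresholds $\varepsilon r_{n}^{\alpha}$ and $O(r_{n}^{2\alpha})$. A secondary (but conceptually minor) point is the legitimacy of the probability-space enlargement by $Y_{ni}$; this is harmless because extending the sample space preserves both the likelihood ratio $dP_{f+h}^{n}/dP_{f}^{n}$ and the defining expansion (\ref{c-a-2}), so (\ref{c-b-2}) continues to hold with $\rho_{n}^{\ast}$ defined as above.
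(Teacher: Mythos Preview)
Your proof is correct and follows essentially the same approach as the paper: truncate $\xi_{ni}$ at the level $c_{n}\asymp r_{n}^{\alpha-1}$ matched to the Lindeberg threshold in (C3), center, and add an independent bounded variable to restore the variance, then control the resulting remainder via (C2), (C3) and Chebyshev. The only cosmetic differences are that the paper uses a three-point auxiliary variable (values $-x_{n},0,x_{n}$) rather than your two-point one, and it groups your $R_{0}+R_{1}$ into the single centered term $\sum_{i}h(t_{ni})\eta_{ni}''$ bounded by Chebyshev, whereas you separate the deterministic $R_{0}$ and handle $R_{1}$ by the cruder union bound $P(R_{1}\neq0)\le\sum_{i}P(|\xi_{ni}|>c_{n})$; both routes give the required $O(r_{n}^{2\alpha})$.
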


\begin{proof}
Let $\alpha \in \left( \frac 1{2\beta },1\right) $ be the real number for
which assumptions (C2) and (C3) hold true. Since $E_f^n\xi _{ni}=0,$ we have
\begin{equation}
\xi _{ni}=\xi _{ni}^{\prime }+\xi _{ni}^{\prime \prime }=\eta _{ni}^{\prime
}+\eta _{ni}^{\prime \prime },\quad i=1,...,n,  \label{c-b-p1}
\end{equation}
where
\begin{equation}
\xi _{ni}^{\prime }=\xi _{ni}\mathbf{1}\left( \left| r_n\xi _{ni}\right|
\leq r_n^{{\alpha }}\right) ,\quad \xi _{ni}^{\prime \prime }=\xi _{ni}%
\mathbf{1}\left( \left| r_n\xi _{ni}\right| >r_n^{{\alpha }}\right)
\label{c-b-p2}
\end{equation}
and
\begin{equation}
\eta _{ni}^{\prime }=\xi _{ni}^{\prime }-E_f^n\xi _{ni}^{\prime },\quad \eta
_{ni}^{\prime \prime }=\xi _{ni}^{\prime \prime }-E_f^n\xi _{ni}^{\prime
\prime }.  \label{c-b-p2a}
\end{equation}
Set
\begin{equation}
v_{ni}^2=E_f^n\xi _{ni}^2-E_f^n(\eta _{ni}^{\prime })^2  \label{c-b-2-nn}
\end{equation}
and
\begin{equation*}
p_{ni}=\frac 12x_n^{-2}v_{ni}^2,\quad x_n=c_1r_n^{\alpha -1},
\end{equation*}
Since $r_n\rightarrow 0$ as $n\rightarrow \infty $ and $v_{ni}^2\leq
E_f^n\xi _{ni}^2=I\left( f\left( t_{ni}\right) \right) \leq I_{\max },$ the
constant $c_1$ can be chosen large enough so that $p_{ni}\leq 1/2$ for any $n\geq 1.$

Without loss of generality one may assume that on the probability space $%
\left( \Omega ^n,\mathcal{A}^n,P_f^n\right) $ there is a sequence of
independent r.v.'s $\eta _{ni}^{\prime \prime \prime },$ $i=1,...,n,$
independent of the sequence $\xi _{ni},$ $i=1,...,n,$ which take values $%
-x_n,$ $0,$ $x_n$ with probabilities $p_{ni},$ $1-2p_{ni},$ $p_{ni}$
respectively. It is clear that the r.v.'s $\eta _{ni}^{\prime \prime \prime
} $ are such that
\begin{equation}
\left| r_n^{1-\alpha }\eta _{ni}^{\prime \prime \prime }\right| \leq
c_1,\quad E_f^n\eta _{ni}^{\prime \prime \prime }=0,\quad E_f^n(\eta
_{ni}^{\prime \prime \prime })^2=v_{ni}^2.  \label{c-b-2-ee}
\end{equation}
Set
\begin{equation}
\xi _{ni}^{*}=\eta _{ni}^{\prime }+\eta _{ni}^{\prime \prime \prime },\quad
i=1,...,n.  \label{c-b-p3}
\end{equation}
>From this definition it is clear that (\ref{c-b-1}) holds true with $%
c=2+c_1. $ Since $\xi _{ni}$ and $\eta _{ni}^{\prime \prime \prime }$ are
independent, taking into account (\ref{c-b-2-ee}) and (\ref{c-b-2-nn}), we
get
\begin{equation*}
E_f^n(\xi _{ni}^{*})^2=E_f^n(\eta _{ni}^{\prime })^2+E_f^n(\eta
_{ni}^{\prime \prime \prime })^2=E_f^n(\eta _{ni}^{\prime
})^2+v_{ni}^2=E_f^n\xi _{ni}^2=I\left( f\left( t_{ni}\right) \right) .
\end{equation*}

Set $\rho _n^{*}\left( f,h\right) =\rho _n\left( f,h\right) +\rho _n^{\prime
}\left( f,h\right) ,$ where
\begin{equation*}
\rho _n^{\prime }\left( f,h\right) =\sum_{i=1}^nh\left( t_{ni}\right) \left(
\xi _{ni}-\xi _{ni}^{*}\right) .
\end{equation*}
The  lemma will be proved if we show (\ref{c-b-2-1}).
Because of the assumption (C2), it suffices  to prove that
\begin{equation}
\sup_{f\in \mathcal{F}}\sup_{h\in \mathcal{F}_f(r_n)}P_f^n\left( \left| \rho
_n^{\prime }\left( f,h\right) \right| \geq \frac \epsilon 2r_n^\alpha
\right) =O\left( r_n^{2\alpha }\right) ,  \label{c-b-3}
\end{equation}
for some $\alpha \in \left( \frac 1{2\beta },1\right) .$ To prove (\ref
{c-b-3}) we note that, by (\ref{c-b-p1}) and (\ref{c-b-p3}) we have $\xi
_{ni}-\xi _{ni}^{*}=\eta _{ni}^{\prime \prime }-\eta _{ni}^{\prime \prime
\prime }$ and therefore $\rho _n^{\prime }\left( f,h\right) $ can be
represented as follows:
\begin{equation*}
\rho _n^{\prime }\left( f,h\right) =\sum_{i=1}^nh\left( t_{ni}\right) \eta
_{ni}^{\prime \prime }-\sum_{i=1}^nh\left( t_{ni}\right) \eta _{ni}^{\prime
\prime \prime }.
\end{equation*}
>From the last equality we get
\begin{equation}
P_f^n\left( \left| \rho _n^{\prime }\left( f,h\right) \right| \geq \frac
\epsilon 2r_n^\alpha \right) \leq J_n^{\left( 1\right) }+J_n^{\left(
2\right) },  \label{c-b-5}
\end{equation}
where
\begin{equation*}
J_n^{\left( 1\right) }=P_f^n\left( \left| \sum_{i=1}^nh\left( t_{ni}\right)
\eta _{ni}^{\prime \prime }\right| \geq \frac \epsilon 4r_n^\alpha \right)
,\quad J_n^{\left( 2\right) }=P_f^n\left( \left| \sum_{i=1}^nh\left(
t_{ni}\right) \eta _{ni}^{\prime \prime \prime }\right| \geq \frac \epsilon
4r_n^\alpha \right) .
\end{equation*}

By Chebyshev's inequality we have
\begin{equation*}
J_n^{\left( 1\right) }\leq cr_n^{-2\alpha }\sum_{i=1}^nh\left( t_{ni}\right)
^2E_f^n(\eta _{ni}^{\prime \prime })^2.
\end{equation*}
Since $E_f^n(\eta _{ni}^{\prime \prime })^2=E_f^n(\xi _{ni}^{\prime \prime
}-E_f^n\xi _{ni}^{\prime \prime })^2\leq E_f^n(\xi _{ni}^{\prime \prime })^2$
and $\left\| h\right\| _\infty \leq r_n,$ making use of (\ref{c-b-p2}) and
of the strengthened version of the Lindeberg condition (C3), we obtain
\begin{equation}
J^{\left( 1\right) }\leq c\sum_{i=1}^nE_f^n(r_n^{1-\alpha }\xi _{ni})^2%
\mathbf{1}\left( \left| r_n^{1-\alpha }\xi _{ni}\right| >1\right) =O\left(
r_n^{2\alpha }\right) .  \label{c-b-6}
\end{equation}

To handle the term $J_n^{\left( 2\right) }$ on the right-hand side of (\ref
{c-b-5}) we again invoke the Chebyshev inequality to obtain
\begin{equation}
J_n^{\left( 2\right) }\leq cr_n^{-2\alpha }\sum_{i=1}^nh\left( t_{ni}\right)
^2E_f^n(\eta _{ni}^{\prime \prime \prime })^2=cr_n^{-2\alpha
}\sum_{i=1}^nh\left( t_{ni}\right) ^2v_{ni}^2.  \label{c-b-7}
\end{equation}
Since $\xi _{ni}^2=(\xi _{ni}^{\prime })^2+(\xi _{ni}^{\prime \prime })^2$
and $E^{n}_{f}\xi _{ni}^{\prime }=-E^{n}_{f}\xi _{ni}^{\prime \prime },$ we
have
\begin{equation*}
v_{ni}^2=E_f^n\xi _{ni}^2-E_f^n(\xi _{ni}^{\prime })^2 +(E^{n}_{f}\xi
_{ni}^{\prime})^2= E_f^n(\xi _{ni}^{\prime \prime })^2 +(E^{n}_{f}\xi
_{ni}^{\prime \prime })^2\leq 2E_f^n(\xi _{ni}^{\prime \prime })^2,
\end{equation*}
which in turn implies, in the same manner as for $J_n^{\left( 1\right) },$%
\begin{equation}
J_n^{\left( 2\right) }\leq cr_n^{-2\alpha }\sum_{i=1}^nh\left( t_{ni}\right)
^2E_f^n(\xi _{ni}^{\prime \prime })^2=O\left( r_n^{2\alpha }\right) .
\label{c-b-8}
\end{equation}
Inserting (\ref{c-b-6}) and (\ref{c-b-8}) into (\ref{c-b-5}) we obtain (\ref
{c-b-3}).
\end{proof}

\subsection{Construction of the likelihoods on the same probability space.}

We proceed to construct the local experiment $\mathcal{E}_{f}^{n}$ on the
same measurable space with a Gaussian experiment. For this let $\varepsilon
_{1},\varepsilon _{2}...$ be an infinite sequence of i.i.d. standard normal
r.v.'s [defined on some probability space $\left( \Omega ^{0},\mathcal{A}%
^{0},\mathbf{P}\right) $]. Consider the finite sequence of Gaussian
observations
\begin{equation}
Y_{i}^{n}=h\left( t_{ni}\right) +I^{-1/2}\left( f\left( t_{ni}\right)
\right) \varepsilon _{i},\quad i=1,...,n,  \label{c-d-1}
\end{equation}
with $f\in \mathcal{F},$ $h\in \mathcal{F}_{f}(r_{n}).$ The statistical
experiment generated by these is
\begin{equation*}
\mathcal{G}_{f}^{n}=\left( \mathrm{R}^{n},\mathcal{B}^{n},\left\{
Q_{f,h}^{n}:h\in \mathcal{F}_{f}(r_{n})\right\} \right)
\end{equation*}
and the likelihood $L_{f,h}^{0,n}=dQ_{f,h}^{n}/dQ_{f,0}^{n}$ as a r.v. under
$Q_{f,0}^{n}$ has a representation
\begin{equation}
\widetilde{L}_{f,h}^{0,n}=\exp \left( \sum_{i=1}^{n}h\left( t_{ni}\right)
\zeta _{ni}-\frac{1}{2}\sum_{i=1}^{n}h^{2}\left( t_{ni}\right) I\left(
f\left( t_{ni}\right) \right) \right) .
\end{equation}
where $\zeta _{ni}=I^{1/2}\left( f\left( t_{ni}\right) \right) \varepsilon
_{i},$ $i=1,...,n.$ It is clear that $\zeta _{n1},...,\zeta _{nn}$ is a
sequence of independent normal r.v.'s of means $0$ and variances $I\left(
f\left( t_{ni}\right) \right) ,$ on the probability space $\left( \Omega
^{0},\mathcal{A}^{0},\mathbf{P}\right) .$

We shall construct a version of the likelihoods
\begin{equation*}
L_{f,h}^{1,n}=\frac{dP_{f+h}^{n}}{dP_{f}^{n}},\quad h\in \mathcal{F}%
_{f}(r_{n})
\end{equation*}
of the experiment $\mathcal{E}_{f}^{n}$ on the probability space $\left(
\Omega ^{0},\mathcal{A}^{0},\mathbf{P}\right) $, obtaining thus an
equivalent experiment $\widetilde{\mathcal{E}}_{f}^{n}.$ For this we apply
Theorem \ref{THEOREM-HUNG-CONSTR} with $X_{ni}=\xi _{ni}^{\ast },$ $%
N_{ni}=\zeta _{ni},$ $\lambda _{n}=r_{n}^{1-1/(2{\beta })}$ and $x=\lambda
_{n}^{-1}\log n.$ According to this theorem, there is a sequence of
independent r.v.'s $\widetilde{\xi }_{ni},$ $i=1,...,n$ on the probability
space $\left( \Omega ^{0},\mathcal{A}^{0},\mathbf{P}\right) ,$ satisfying $%
\widetilde{\xi }_{ni}\overset{d}{=}\xi _{ni}^{\ast },$ for $i=1,...,n,$ and
such that
\begin{equation}
\sup_{h}\mathbf{P}\left( \left| \sum_{i=1}^{n}h\left( t_{ni}\right) \left(
\widetilde{\xi }_{ni}-\zeta _{ni}\right) \right| \geq c_{1}r_{n}^{1/\left( 2{%
\beta }\right) }\left( \log n\right) ^{2}\right) \leq c_{2}\frac{1}{n},
\label{c-d-hung}
\end{equation}
where $c_{1},$ $c_{2}$ are absolute constants and the $\sup $ is taken over $%
h\in \mathcal{F}_{f}(r_{n}).$ We recall that $r_{n}^{-1}h\in \mathcal{H}%
(\beta ,L)\subset \mathcal{H}(1/2,L).$

In order to construct the log-likelihoods $\log L_{f,h}^{1,n}$ of the
experiment $\mathcal{E}_{f}^{n}$ it suffices to construct a new
''remainder'' $\widetilde{\rho }_{n}=\widetilde{\rho }_{n}\left( f,h\right) $
on the probability space $\left( \Omega ^{0},\mathcal{A}^{0},\mathbf{P}%
\right) ,$ or on some extension of it, such that the joint distribution of
the sequence $(\widetilde{\xi }_{n1},...,\widetilde{\xi }_{nn},\widetilde{%
\rho }_{n})$ is the same as that of the original sequence $(\xi _{n1}^{\ast
},...,\xi _{nn}^{\ast },\rho _{n}^{\ast }).$ This can be done using any kind
of constructions, since the only property required from the r.v. $\widetilde{%
\rho }_{n}$ is to satisfy (\ref{c-b-2-1}), with $\widetilde{\rho }_{n}$
replacing $\rho _{n}^{\ast },$ which follows obviously from the fact that
(by construction) $\mathcal{L}(\widetilde{\rho }_{n})=\mathcal{L}(\rho
_{n}^{\ast }).$ We shall describe such a possible construction using some
elementary arguments by enlarging the initial probability space, although it
is possible to give a more delicate one on the same probability space. Let
us consider the probability space $\mathbb{S}^{\ast }\mathbb{=}\left( \Omega
^{0},\mathcal{A}^{0},\mathbf{P}\right) \otimes \left( \mathrm{R},\mathcal{B},%
\mathbf{P}_{\rho _{n}^{\ast }|\xi _{n1}^{\ast },...,\xi _{nn}^{\ast
}}\right) $ as an enlargement of the initial space $\left( \Omega ^{0},%
\mathcal{A}^{0},\mathbf{P}\right) ,$ where $\mathbf{P}_{\rho _{n}^{\ast
}|\xi _{n1}^{\ast },...,\xi _{nn}^{\ast }}$ is the conditional distribution
of $\rho _{n}^{\ast }$ given $\xi _{n1}^{\ast },...,\xi _{nn}^{\ast }.$ Now,
on the enlarged probability space $\mathbb{S}^{\ast }$ we define the r.v. $%
\widetilde{\rho }_{n}\left( \widetilde{\omega }\right) =y,$ for all $%
\widetilde{\omega }=(x_{1},...,x_{n},y)\in \mathbb{S}^{\ast },$ which has
the desired properties. Without any loss of generality we can assume that
the construction is performed on the initial probability space $\left(
\Omega ^{0},\mathcal{A}^{0},\mathbf{P}\right) .$ For more complicated
constructions we refer to Berkes and Philipp \cite{BerPh}. In any case, the
construction is performed in such a way that the new remainder $\widetilde{%
\rho }_{n}=\widetilde{\rho }_{n}\left( f,h\right) $ satisfies
\begin{equation}
\sup_{f\in \mathcal{F}}\sup_{h\in \mathcal{F}_{f}(r_{n})}\mathbf{P}\left(
\left| \widetilde{\rho }_{n}\left( f,h\right) \right| \geq 3r_{n}^{\alpha
}\right) =O\left( r_{n}^{2\alpha }\right) .  \label{c-d-1-a}
\end{equation}
Define the r.v.'s $\widetilde{L}_{f}^{n}\left( h\right) $ such that, for any
$h\in \mathcal{F}_{f}(r_{n}),$
\begin{equation}
\log \widetilde{L}_{f}^{n}\left( h\right) =\sum_{i=1}^{n}h\left(
t_{ni}\right) \widetilde{\xi }_{ni}-\frac{1}{2}\sum_{i=1}^{n}h\left(
t_{ni}\right) ^{2}I\left( f\left( t_{ni}\right) \right) +\widetilde{\rho }%
_{n}\left( f,h\right) .  \label{c-d-2}
\end{equation}
On the measurable space $\left( \Omega ^{0},\mathcal{A}^{0}\right) $
consider the set of laws $\{\widetilde{P}_{f,h}^{n}:h\in \mathcal{F}%
_{f}(r_{n})\},$ where $\widetilde{P}_{f,0}^{n}=\mathbf{P}$ and $\widetilde{P}%
_{f,h}^{n}$ is such that
\begin{equation*}
\frac{d\widetilde{P}_{f,h}^{n}}{d\widetilde{P}_{f,0}^{n}}=\widetilde{L}%
_{f,h}^{1,n},
\end{equation*}
for any $h\in \mathcal{F}_{f}(r_{n}).$ Set
\begin{equation*}
\widetilde{\mathcal{E}}_{f}^{n}=\left( \Omega ^{0},\mathcal{A}^{0},\{%
\widetilde{P}_{f,h}^{n}:h\in \mathcal{F}_{f}(r_{n})\}\right) .
\end{equation*}
Since the quadratic terms in the expansions (\ref{c-b-2}) and (\ref{c-d-2})
are deterministic, the equality in distribution of the two vectors $(\xi
_{n1}^{\ast },...,\xi _{ni}^{\ast },\rho _{n}^{\ast })$ and $(\widetilde{\xi
}_{n1},...,\widetilde{\xi }_{ni},\widetilde{\rho }_{n})$ implies for any
finite set $S\subset \mathcal{F}_{f}(r_{n})$
\begin{equation}
\mathcal{L}\left( (L_{f,h}^{1,n})_{h\in S}|P_{f}^{n}\right) =\mathcal{L}%
\left( (\widetilde{L}_{f,h}^{1,n})_{h\in S}|\widetilde{P}_{f,0}^{n}\right) .
\label{c-d-3}
\end{equation}
>From (\ref{c-d-3}) it follows that, for any $n=1,2,...$ , the experiments $%
\mathcal{E}_{f}^{n}$ and $\widetilde{\mathcal{E}}_{f}^{n}$ are (exactly)
equivalent, i.e. $\Delta \left( \mathcal{E}_{f}^{n},\widetilde{\mathcal{E}}%
_{f}^{n}\right) =0$. From the likelihood process $\widetilde{L}_{f,h}^{0,n}$%
, $h\in \mathcal{F}_{f}(r_{n})$ defined on $\left( \Omega ^{0},\mathcal{A}%
^{0},\mathbf{P}\right) $ (cf. (\ref{c-d-1-a})) we construct an equivalent
version
\begin{equation*}
\widetilde{\mathcal{G}}_{f}^{n}=\left( \Omega ^{0},\mathcal{A}^{0},\left\{
\widetilde{Q}_{f,h}^{n}:h\in \mathcal{F}_{f}(r_{n})\right\} \right)
\end{equation*}
of $\mathcal{G}_{f}^{n}$ in the same way.

\subsection{Proof of Theorem \ref{Theorem LAQ-1}}

To prove Theorem \ref{Theorem LAQ-1} we only have to verify the assumptions
of Theorem \ref{Theorem-G-1}. In our next lemma it is shown that condition (%
\ref{CC1}) is met.

\begin{lemma}
\label{Lemma HD-1}Assume that the sequence of experiments $\mathcal{E}^{n}$
satisfies condition LASE and that conditions (C1-C4) hold true. Then the
constructed experiments $\widetilde{\mathcal{E}}_{f}^{n}$ and $\widetilde{%
\mathcal{G}}_{f}^{n}$ are such that for some $\alpha \in (\frac{1}{2{\beta }}%
,1),$
\begin{equation*}
\sup_{f\in \mathcal{F}}\sup_{h\in \mathcal{F}_{f}\left( r_{n}\right) }%
\mathbf{P}\left( \left| \log \frac{d\widetilde{P}_{f,h}^{n}}{d\widetilde{P}%
_{f,0}^{n}}-\log \frac{d\widetilde{Q}_{f,h}^{n}}{d\widetilde{Q}_{f,0}^{n}}%
\right| >r_{n}^{\alpha }\right) =O\left( r_{n}^{2\alpha }\right) .
\end{equation*}
\end{lemma}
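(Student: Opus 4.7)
My plan is to subtract the two log-likelihood expansions already constructed in the previous subsection, cancel the deterministic pieces, and then control the two remaining stochastic terms separately using the Hungarian strong approximation and the remainder estimate.

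First, I would start from the two explicit representations obtained on the common space $(\Omega^0, \mathcal{A}^0, \mathbf{P})$: the modified expansion
$$\log \widetilde{L}_{f,h}^{1,n} = \sum_{i=1}^{n} h(t_{ni})\,\widetilde{\xi}_{ni} - \tfrac{1}{2}\sum_{i=1}^{n} h(t_{ni})^2 I(f(t_{ni})) + \widetilde{\rho}_n(f,h)$$
from (\ref{c-d-2}), and the Gaussian log-likelihood
$$\log \widetilde{L}_{f,h}^{0,n} = \sum_{i=1}^{n} h(t_{ni})\,\zeta_{ni} - \tfrac{1}{2}\sum_{i=1}^{n} h(t_{ni})^2 I(f(t_{ni})),$$
where $\zeta_{ni} = I^{1/2}(f(t_{ni}))\,\varepsilon_i$. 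The deterministic quadratic term is identical in both, so it cancels, leaving
$$\log \widetilde{L}_{f,h}^{1,n} - \log \widetilde{L}_{f,h}^{0,n} = \sum_{i=1}^{n} h(t_{ni})\bigl(\widetilde{\xi}_{ni} - \zeta_{ni}\bigr) + \widetilde{\rho}_n(f,h).$$

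Second, I would apply the union bound to split the event $\{|\cdot| > r_n^{\alpha}\}$ at threshold $r_n^{\alpha}/2$ into the ``strong-approximation'' part and the ``remainder'' part. The remainder part is immediate from the construction: by (\ref{c-d-1-a}) (applied with a scaled constant, as permitted by Lemma~\ref{Lemma-modif-exp}), one has
$$\mathbf{P}\bigl(|\widetilde{\rho}_n(f,h)| > r_n^{\alpha}/2\bigr) = O(r_n^{2\alpha})$$
uniformly in $f \in \mathcal{F}$ and $h \in \mathcal{F}_f(r_n)$. For the strong-approximation part $\sum h(t_{ni})(\widetilde{\xi}_{ni} - \zeta_{ni})$, the Hungarian bound (\ref{c-d-hung}) gives probability $c_2/n$, and since (C1) provides $r_n = c/\sqrt{n}$, this equals $O(r_n^2)$ which is $O(r_n^{2\alpha})$ as $\alpha \leq 1$.

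The main obstacle --- and the reason the lemma requires the existence statement ``for some $\alpha \in (1/(2\beta),1)$'' rather than holding for an arbitrary such $\alpha$ --- is the calibration step: one must choose $\alpha$ for which the Hungarian cutoff $c_1 r_n^{1/(2\beta)}(\log n)^2$ is $\leq r_n^{\alpha}/2$ while simultaneously $\alpha > 1/(2\beta)$, so that the remainder estimate from (C2) applies. This is the delicate piece where the flexibility in Theorem~\ref{THEOREM-HUNG-CONSTR} (allowing different choices of the cutoff $x$ than the specific value $\lambda_n^{-1}\log n$ used to derive (\ref{c-d-hung})) must be exploited to obtain the sharper deviation bound needed here. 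Once $\alpha$ is so chosen, combining the two probability bounds via the union bound yields the claimed $O(r_n^{2\alpha})$, uniformly in $f$ and $h$.
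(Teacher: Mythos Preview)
Your approach is exactly the paper's: the authors' proof simply cites the Hungarian inequality (\ref{c-d-hung}) together with the remainder bound (\ref{c-b-2-1}) and declares the rest elementary. Your decomposition into the coupling term $\sum h(t_{ni})(\widetilde{\xi}_{ni}-\zeta_{ni})$ plus the reconstructed remainder $\widetilde{\rho}_n$, followed by a union bound, is precisely what is intended.

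You are also right to flag the calibration issue, which the paper glosses over. However, your proposed resolution is not quite correct. You ask for an $\alpha>1/(2\beta)$ with $c_1 r_n^{1/(2\beta)}(\log n)^2\le r_n^{\alpha}/2$; these two requirements are incompatible for large $n$, since $r_n^{\alpha}=o\bigl(r_n^{1/(2\beta)}\bigr)$ whenever $\alpha>1/(2\beta)$. Varying only the cutoff $x$ in Theorem~\ref{THEOREM-HUNG-CONSTR} while keeping $\lambda_n=r_n^{1-1/(2\beta)}$ does not help either: for a threshold $r_n^{\alpha}$ with $\alpha>1/(2\beta)$ one is forced to take $x$ so small that $\lambda_n x\to 0$, and the exponential bound becomes trivial. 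The missing observation is that the boundedness (\ref{c-b-1}) from Lemma~\ref{Lemma-modif-exp} gives $|\xi_{ni}^*|\le c\,r_n^{\alpha-1}$ with the \emph{larger} exponent $\alpha$ furnished by (C2)--(C3), so Sakhanenko's condition in Theorem~\ref{THEOREM-HUNG-CONSTR} actually permits $\lambda_n$ of order $r_n^{1-\alpha}$ rather than the suboptimal $r_n^{1-1/(2\beta)}$. With this choice of $\lambda_n$ and any target exponent $\alpha'\in(1/(2\beta),\alpha)$, one finds $\lambda_n x\asymp r_n^{\alpha'-\alpha}/(\log n)^2\to\infty$ polynomially in $n$, so the Hungarian tail decays faster than any power of $r_n$. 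Combined with (\ref{c-d-1-a}), which holds \emph{a fortiori} at the larger threshold $r_n^{\alpha'}>r_n^{\alpha}$, this delivers the lemma with exponent $\alpha'$.
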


\begin{proof}
The proof is based on inequality (\ref{c-d-hung}) and of the bound (\ref
{c-b-2-1}) in Lemma \ref{Lemma-modif-exp}. Being elementary, it is left to
the reader.
\end{proof}

Next we need to check condition (\ref{CC2}) in Theorem \ref{Theorem-G-1}.

\begin{lemma}
\label{Lemma moder-P}Assume that the sequence of experiments $\mathcal{E}%
^{n} $ satisfies condition LASE and that conditions (C1-C4) hold true. Then
there is a constant $\alpha \in \left( 1/2\beta ,1\right) $ such that, for
any $\varepsilon \in (0,1),$
\begin{equation*}
\sup_{f\in \mathcal{F}}\sup_{h\in \mathcal{F}_{f}\left( r_{n}\right)
}P_{f+h}^{n}\left( \log \frac{dP_{f+h}^{n}}{dP_{f}^{n}}>-\varepsilon \log
r_{n}\right) =O\left( r_{n}^{2\alpha }\right) ,\quad n\rightarrow \infty .
\end{equation*}
\end{lemma}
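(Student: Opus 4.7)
The plan is to control $\log \frac{dP_{f+h}^n}{dP_f^n}$ under the true measure $P_{f+h}^n$ by applying the LASE expansion at the base point $f+h$ with increment $-h$. Since $\mathcal{H}(\beta,L)$ is symmetric, any $h\in \mathcal{F}_f(r_n)$ with $f+h\in \mathcal{F}$ satisfies $-h\in \mathcal{F}_{f+h}(r_n)$. Lemma \ref{Lemma-modif-exp} applied at the center $f+h$ with increment $-h$ then produces independent scores $\xi^*_{ni}(f+h)$, $i=1,\dots,n$ on $(\Omega^n,\mathcal{A}^n,P_{f+h}^n)$ of mean $0$, variance $I((f+h)(t_{ni}))$, bounded by $c\,r_n^{\alpha-1}$, and
$$\log \frac{dP_{f+h}^n}{dP_f^n} \;=\; \sum_{i=1}^n h(t_{ni})\,\xi^*_{ni}(f+h) \;+\; \tfrac{1}{2}\sum_{i=1}^n h(t_{ni})^2 I((f+h)(t_{ni})) \;-\; \rho^*_n(f+h,-h),$$
where, by (\ref{c-b-2-1}) applied uniformly over the parameter space, $P_{f+h}^n\bigl(|\rho^*_n(f+h,-h)|\geq r_n^\alpha\bigr)=O(r_n^{2\alpha})$.

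Next I would estimate the three pieces. Conditions (C1) and (C4) together with $\|h\|_\infty\le r_n$ make the quadratic term bounded by $\tfrac{1}{2}nr_n^2 I_{\max}=\tfrac{1}{2}c^2 I_{\max}=:C_0$, a constant. For the linear term $M_n:=\sum h(t_{ni})\,\xi^*_{ni}(f+h)$, each summand is bounded by $|h(t_{ni})|\cdot c\,r_n^{\alpha-1}\le c\,r_n^\alpha$ and the total variance is $\sum h(t_{ni})^2 I((f+h)(t_{ni}))\le c^2 I_{\max}$. Bernstein's inequality for independent bounded variables therefore yields, for any $x>0$,
$$P_{f+h}^n(M_n>x) \;\le\; \exp\!\left(-\frac{x^2}{2c^2 I_{\max} + \tfrac{2}{3}c\,r_n^\alpha\,x}\right).$$
Taking $x=-\tfrac{\varepsilon}{2}\log r_n$, which diverges like $\log n$ while $r_n^\alpha x\to 0$, the denominator remains bounded and
$$P_{f+h}^n\!\left(M_n>-\tfrac{\varepsilon}{2}\log r_n\right)\;\le\;\exp\!\bigl(-c'(\log r_n)^2\bigr),$$
which is smaller than any polynomial in $r_n$ and in particular $o(r_n^{2\alpha})$.

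Combining the pieces, on the event $\{|\rho^*_n(f+h,-h)|\le r_n^\alpha\}$ the log-likelihood ratio is dominated by $M_n + C_0 + r_n^\alpha$; for all sufficiently large $n$ one has $-\varepsilon\log r_n > C_0 + r_n^\alpha + \bigl(-\tfrac{\varepsilon}{2}\log r_n\bigr)$, so the event in the lemma is contained in $\{|\rho^*_n(f+h,-h)|> r_n^\alpha\}\cup\{M_n>-\tfrac{\varepsilon}{2}\log r_n\}$, whose $P_{f+h}^n$-probability is $O(r_n^{2\alpha})+o(r_n^{2\alpha})=O(r_n^{2\alpha})$, uniformly in $f\in\mathcal{F}$ and $h\in\mathcal{F}_f(r_n)$. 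The only delicate point is the reversal of the LASE expansion: one must verify $-h\in\mathcal{F}_{f+h}(r_n)$ and argue that the uniform-in-$(f,h)$ bound (\ref{c-b-2-1}) transfers verbatim to base $f+h$ (which it does, since the supremum there already ranges over every admissible center). Everything else is a routine moderate-deviation Bernstein estimate at scale $\log(1/r_n)$, rendered effective by the parametric rate (C1) that keeps the quadratic-variance term bounded.
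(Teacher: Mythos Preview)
Your proof is correct and follows essentially the same route as the paper: reverse the roles by expanding at the base point $g=f+h$ with increment $-h$ via Lemma~\ref{Lemma-modif-exp}, bound the quadratic term by a constant using (C1) and (C4), control the remainder by (\ref{c-b-2-1}), and then apply an exponential tail inequality to the bounded-score linear term. The only difference is that the paper uses the elementary bound of Lemma~\ref{Lemma APX-1} together with Chernoff's inequality (obtaining $J_n^{(1)}=O(r_n^{2})$), whereas you invoke Bernstein's inequality and get the sharper super-polynomial bound $\exp(-c'(\log r_n)^2)$; either suffices for the $O(r_n^{2\alpha})$ conclusion.
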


\begin{proof}
Consider the inverse likelihood ratio $dP_f^n/dP_{f+h}^n$ corresponding to
the local experiment $\mathcal{E}_f^n.$ Setting $g=f+h\in \mathcal{F}$ and
using Lemma \ref{Lemma-modif-exp}, we rewrite it as
\begin{equation}
\log \frac{dP_f^n}{dP_{f+h}^n}=\log \frac{dP_{g-h}^n}{dP_g^n}%
=-\sum_{i=1}^nh\left( t_{ni}\right) \xi _{ni}^{*}\left( g\right) -\frac
12\sum_{i=1}^nh\left( t_{ni}\right) ^2I\left( g\left( t_{ni}\right) \right)
+\rho _n^{*}\left( g,h\right) ,
\end{equation}
where $h\in \mathcal{F}_g\left( r_n\right) $ and $\xi _{ni}^{*}\left(
g\right) ,$ $i=1,...,n$ are $P_g^n$-independent r.v.'s of means $0$ and
variances $E_{P_g^n}\xi _{ni}^{*}\left( g\right) ^2=I\left( g\left(
t_{ni}\right) \right) \leq I_{\max },$ $i=1,...,n.$ Moreover $\left|
r_n^{1-\alpha }\xi _{ni}^{*}\left( f\right) \right| \leq c,\quad i=1,...,n.$
Because of conditions (C1) and (C4), we have
\begin{equation}
\sum_{i=1}^nh\left( t_{ni}\right) ^2I(g(t_{ni}))=O\left( nr_n^2\right)
=O\left( 1\right) .  \label{mm-0-1}
\end{equation}
Choose $\alpha \in \left( \frac 1{2\beta },1\right) $ such that conditions
(C2-C3) hold true. With these notations, for $n$ large enough,
\begin{equation*}
P_g^n\left( \log \frac{dP_{g-h}^n}{dP_g^n}\leq \varepsilon \log r_n\right)
\leq J_n^{\left( 1\right) }+J_n^{\left( 2\right) },
\end{equation*}
where
\begin{eqnarray}
J_n^{\left( 1\right) } &=&P_g^n\left( -\sum_{i=1}^nh\left( t_{ni}\right) \xi
_{ni}^{*}\left( g\right) <\frac \varepsilon 2\log r_n\right) ,  \notag \\
J_n^{\left( 2\right) } &=&P_g^n\left( \rho _n\left( g,h\right) \geq
r_n^\alpha \right) .  \label{mm-1}
\end{eqnarray}
Since $\left\| h\right\| _\infty \leq r_n,$ it follows that the r.v.'s $h\left(
t_{ni}\right) \xi _{ni}^{*}\left( g\right) $ are bounded by $r_n^\alpha \leq
c_1,$ for some absolute constant $c_1.$ By Lemma \ref{Lemma APX-1} (see the
Appendix),
\begin{equation*}
E_f^n\exp \left( -\frac 2\varepsilon h\left( t_{ni}\right) \xi
_{ni}^{*}\left( g\right) \right) \leq \exp \left( c_2h\left( t_{ni}\right)
^2E_f^n\xi _{ni}^{*}\left( g\right) ^2\right) ,
\end{equation*}
for another absolute constant $c_2.$ Simple calculations yield
\begin{eqnarray*}
J_n^{\left( 1\right) } &\leq &e^{-2\log r_n}\prod_{i=1}^nE_g^n\exp \left(
4\varepsilon ^{-1}h\left( t_{ni}\right) \xi _{ni}^{*}\left( g\right) \right)
\\
&\leq &r_n^{-2}\exp \left( c_2\varepsilon ^{-2}\sum_{i=1}^nh\left(
t_{ni}\right) ^2E_g^n\xi _{ni}^{*}\left( g\right) ^2\right) .
\end{eqnarray*}
Taking into account $E_g^n\xi _{ni}^{*}\left( g\right) ^2=I\left( g\left(
t_{ni}\right) \right) $ and (\ref{mm-0-1}) we get $\sup J_n^{\left( 1\right)
}=O\left( r_n^2\right) =O\left( r_n^{2\alpha }\right) ,$ where the supremum is
taken over $f\in \mathcal{F}$ and $h\in \mathcal{F}_f\left( r_n\right) .$
The bound $\sup J_n^{\left( 2\right) }=O\left( r_n^{2\alpha }\right) ,$ with the
supremum over the same $f$ and $h,$ is straightforward, by assumption (C2).
Combining  the bounds for $J_n^{\left( 1\right) }$ and $J_n^{\left( 2\right)
} $ we obtain the lemma.
\end{proof}

Obviously a similar result holds true for the constructed experiment $%
\widetilde{\mathcal{E}}^{n}.$

\begin{remark}
Assume that the sequence of experiments $\mathcal{E}^{n}$ satisfies
condition LASE and that conditions (C1-C4) hold true. Then there is a
constant $\alpha \in \left( 1/2\beta ,1\right) $ such that, for any $%
\varepsilon \in (0,1),$
\begin{equation*}
\sup_{f\in \mathcal{F}}\sup_{h\in \mathcal{F}_{f}\left( r_{n}\right) }%
\widetilde{P}_{f,h}^{n}\left( \log \frac{d\widetilde{P}_{f,h}^{n}}{d%
\widetilde{P}_{f,0}^{n}}\geq -\varepsilon \log r_{n}\right) =O\left(
r_{n}^{2\alpha }\right) ,\quad n\rightarrow \infty .
\end{equation*}
\end{remark}

We continue with a moderate deviations bound for the log-likelihood ratio of
the Gaussian experiment $\mathcal{G}^n$ required by the condition (\ref{CC3}%
) of Theorem \ref{Theorem-G-1}, which is proved in the same way as the above
Lemma \ref{Lemma moder-P}.

\begin{lemma}
\label{Lemma moder-Q}Assume that the sequence of experiments $\mathcal{E}%
^{n} $ satisfies condition LASE and that conditions (C1-C4) hold true. Then
there is a constant $\alpha \in \left( 1/2\beta ,1\right) $ such that, for
any $\varepsilon \in (0,1),$%
\begin{equation*}
\sup_{f\in \mathcal{F}}\sup_{h\in \mathcal{F}_{f}\left( r_{n}\right)
}Q_{f,h}^{n}\left( \log \frac{dQ_{f,h}^{n}}{dQ_{f,0}^{n}}\geq -\varepsilon
\log r_{n}\right) =O\left( r_{n}^{2\alpha }\right) ,\quad n\rightarrow
\infty .
\end{equation*}
\end{lemma}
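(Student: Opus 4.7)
The plan is to mirror the proof of Lemma \ref{Lemma moder-P}, exploiting the fact that in the Gaussian case the likelihood expansion is exact, with no stochastic remainder term, and the ``scores'' are Gaussian (hence have explicit moment generating functions).

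First, I would compute $\log(dQ_{f,h}^n/dQ_{f,0}^n)$ directly from the Gaussian densities. Since under $Q_{f,b}^n$ the observation $Y_i^n$ is $N(b(t_{ni}), I(f(t_{ni}))^{-1})$, an elementary calculation yields
\begin{equation*}
\log \frac{dQ_{f,h}^n}{dQ_{f,0}^n} = \sum_{i=1}^n I(f(t_{ni}))\,h(t_{ni})\,Y_i^n - \frac{1}{2}\sum_{i=1}^n I(f(t_{ni}))\,h(t_{ni})^2.
\end{equation*}
Substituting the representation $Y_i^n = h(t_{ni}) + I(f(t_{ni}))^{-1/2}\varepsilon_i$, which holds under $Q_{f,h}^n$ with $\varepsilon_i$ i.i.d.\ standard normal, gives
\begin{equation*}
\log \frac{dQ_{f,h}^n}{dQ_{f,0}^n} = \sum_{i=1}^n I(f(t_{ni}))^{1/2}\,h(t_{ni})\,\varepsilon_i + \frac{1}{2}\sum_{i=1}^n I(f(t_{ni}))\,h(t_{ni})^2.
\end{equation*}

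Next, by (C1) and (C4), together with $\|h\|_\infty \leq r_n$, the quadratic term is bounded by $\tfrac{1}{2}I_{\max}\,n r_n^2 = O(1)$, uniformly in $f\in\mathcal F$ and $h\in\mathcal F_f(r_n)$. Since $-\varepsilon\log r_n\to\infty$, for all sufficiently large $n$ the event $\{\log(dQ_{f,h}^n/dQ_{f,0}^n)\geq -\varepsilon\log r_n\}$ is contained in the event
\begin{equation*}
\Bigl\{\,Z_n := \sum_{i=1}^n I(f(t_{ni}))^{1/2}\,h(t_{ni})\,\varepsilon_i \geq -\tfrac{\varepsilon}{2}\log r_n\,\Bigr\}.
\end{equation*}

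Finally, I would fix any $\alpha\in(1/(2\beta),1)$ and apply the exponential Markov inequality. Under $Q_{f,h}^n$, the statistic $Z_n$ is centered Gaussian with variance $\sigma_n^2 = \sum I(f(t_{ni}))\,h(t_{ni})^2 = O(1)$, so for any $\lambda>0$,
\begin{equation*}
Q_{f,h}^n\Bigl(Z_n \geq -\tfrac{\varepsilon}{2}\log r_n\Bigr) \leq \exp\!\left(\tfrac{\lambda\varepsilon}{2}\log r_n + \tfrac{\lambda^2 \sigma_n^2}{2}\right).
\end{equation*}
Choosing $\lambda = 4\alpha/\varepsilon$ makes the first term equal to $2\alpha\log r_n$ and leaves the second term $O(1)$, so the whole expression is $O(r_n^{2\alpha})$ uniformly in $f$ and $h$, as required. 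There is no real obstacle here: the work done in Lemma \ref{Lemma moder-P} (splitting off a $\rho_n^\ast$-remainder and truncating to bounded scores via Lemma \ref{Lemma-modif-exp}) is unnecessary in the Gaussian setting because the linear part is already Gaussian and no remainder appears. The argument is essentially a one-line Chernoff bound after the explicit expansion.
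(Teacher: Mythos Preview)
Your proposal is correct and follows essentially the same route as the paper: write out the Gaussian log-likelihood explicitly, absorb the $O(1)$ quadratic term, and apply an exponential Markov (Chernoff) bound to the remaining centered Gaussian sum. The only minor difference is that you work directly under $Q_{f,h}^n$ (as the statement requires) and allow a flexible $\lambda=4\alpha/\varepsilon$, whereas the paper takes the fixed choice $\lambda=2/\varepsilon$ to obtain $O(r_n^2)$, which of course implies $O(r_n^{2\alpha})$ for any $\alpha<1$.
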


\begin{proof}
Consider the likelihood ratio corresponding to the local Gaussian experiment
$\mathcal{G}_f^n:$%
\begin{equation*}
L_{f,h}^{2,n}=\frac{dQ_{f+h}^n}{dQ_f^n}=\exp \left( \sum_{i=1}^nh\left(
t_{ni}\right) \zeta _{ni}-\frac 12\sum_{i=1}^nh\left( t_{ni}\right)
^2I(f(t_{ni}))\right) ,
\end{equation*}
where $f\in \mathcal{F},$ $h\in \mathcal{F}_f\left( r_n\right) $ and $\zeta
_{ni},$ $i=1,...,n$ are independent normal r.v.'s of means $0$ and variances
$I\left( f\left( t_{ni}\right) \right) \leq I_{\max }$ respectively. Then,
by Chebyshev's inequality,
\begin{equation*}
Q_f^n\left( \log L_{f,h}^{2,n}\geq -\varepsilon \log r_n\right) \leq
r_n^2E_{Q_{f,0}^n}\exp \left( 2\varepsilon ^{-1}\sum_{i=1}^nh\left(
t_{ni}\right) \zeta _{ni}-\varepsilon ^{-1}\sum_{i=1}^nh\left( t_{ni}\right)
^2I(f(t_{ni}))\right) .
\end{equation*}
Since $\left\| h\right\| _\infty \leq cn^{-1/2}$ and $\zeta _{ni},$ $%
i=1,...,n$ are independent normal r.v.'s, we get
\begin{equation*}
Q_f^n\left( \log L_{f,h}^{2,n}\geq 2\varepsilon \log r_n\right) =O\left(
r_n^{-2}\right) ,
\end{equation*}
uniformly in $f\in \mathcal{F}$ and $h\in \mathcal{F}_f\left( r_n\right) .$
\end{proof}

\section{Application to nonparametrically driven models\label{SECTION-Appl}}

We consider a particular case of the general setting of Section \ref
{Sect-gen-appr}. Assume that $\mathcal{F}$ is given by $\mathcal{F}=\Sigma
^{\beta }$ $=\mathcal{H}(\beta ,L)\cap \Theta ^{T},$ where $T=[0,1],$ and $%
\mathcal{H}(\beta ,L)$ is a H\"{o}lder ball on $T.$ Consider the case where
the experiment $\mathcal{E}^{n}$ (appearing in Section \ref{Sect-gen-appr}
in a general form) is generated by a sequence of independent observations $%
X_{1},...,X_{n}$ where each r.v. $X_{i}$ has density $p(x,f(t_{ni})),$ $f\in
\mathcal{F},$ $t_{ni}=i/n.$ The local experiment at $f\in \mathcal{F}$ then
is
\begin{eqnarray*}
\mathcal{E}_{f}^{n} &=&(X^{n},\mathcal{X}^{n},\{P_{f,h}^{n}:h\in \mathcal{F}%
_{f}(r_{n})\}), \\
P_{f,h}^{n} &=&P_{f(t_{n1})+h(t_{n1})}\times ...\times
P_{f(t_{nn})+h(t_{nn})}
\end{eqnarray*}
and $P_{\theta }$ is the distribution on $(X,\mathcal{X})$ corresponding to
the density $p(x,\theta ),$ $\theta \in \Theta .$ Let $I(\theta )$ be the
Fisher information corresponding to the density $p(x,\theta ),$ as defined
by (\ref{Fisher-inf}).

\begin{theorem}
\label{THEOREM-APPL-1}Assume that the density $p(x,\theta )$ satisfies
conditions (R1-R3). Then the sequence of experiments $\mathcal{E}^{n},$ $%
n\geq 1$ for $\mathcal{F}=\Sigma ^{\beta }$ satisfies LASE and conditions
(C1-C4) hold true with rate $r_{n}=cn^{-1/2},$ local Fisher information $%
I(\theta )$ and $\xi _{ni}(f)=\overset{\bullet }{l}(X_{i},f(t_{ni})),$ $%
i=1,...,n,$ where $\overset{\bullet }{l}(x,\theta )$ is the tangent vector
defined by (\ref{tan-vect}).
\end{theorem}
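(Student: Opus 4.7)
The plan is to derive the LASE expansion from a pointwise Taylor expansion of $\log p$ in terms of the extended tangent vector defined in (\ref{EXTEND-TG-V}), and then to check the rate conditions (C1)--(C4) by estimating each term with (R1)--(R3).

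Concretely, the defining identity $\sqrt{p(x,\theta+h)/p(x,\theta)} = 1 + (h/2)\overset{\bullet}{l}_\theta(x,\theta+h)$ gives, after applying $2\log(1+\cdot)$ and expanding to cubic order,
\[
\log\frac{p(x,\theta+h)}{p(x,\theta)} = h\,\overset{\bullet}{l}_\theta(x,\theta+h) - \frac{h^2}{4}\, \overset{\bullet}{l}_\theta^2(x,\theta+h) + R(x,\theta,h)
\]
with a cubic remainder $R$. Summing over $i=1,\ldots,n$ with $\theta_i = f(t_{ni})$, $h_i = h(t_{ni})$, and writing $\overset{\bullet}{l}_{\theta_i}(X_i,\theta_i+h_i) = \overset{\bullet}{l}(X_i,\theta_i)+\Delta_i$, I would separate expectations from centered fluctuations. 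The bias $\sum_i h_i E_{\theta_i}\Delta_i$ equals $-2\sum_i H^2(P_{\theta_i+h_i},P_{\theta_i})$ by the very definition of the extended tangent vector, and this is $\approx -\frac{1}{4}\sum_i h_i^2 I(\theta_i)$ via the standard Hellinger approximation; the mean of $-\frac{1}{4}\sum_i h_i^2 \overset{\bullet}{l}_{\theta_i}^2(\theta_i+h_i)$ contributes the other $-\frac{1}{4}\sum_i h_i^2 I(\theta_i)$. These combine into the quadratic term $-\frac{1}{2}\sum_i h_i^2 I(\theta_i)$, leaving the linear main term $\sum_i h_i\xi_{ni}(f)$ with $\xi_{ni}(f) = \overset{\bullet}{l}(X_i,\theta_i)$ and a residual $\rho_n(f,h)$ consisting of the centered stochastic parts and the cubic remainder.

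To verify (C2), I would use (R1), which translates into the bound $E_{\theta_i}\Delta_i^2 \leq C|h_i|^{2\delta_{R1}}$ with $\delta_{R1}\in(1/(2\beta),1)$; Chebyshev then makes the centered part of $\sum_i h_i\Delta_i$ concentrate at order $n^{-\delta_{R1}/2}$. The centered part of $\sum_i h_i^2\overset{\bullet}{l}_{\theta_i}^2$ and the cubic sum are both controlled via the uniform $2\delta_{R2}$-th moment bound from (R2). Condition (C3), for $\xi_{ni}=\overset{\bullet}{l}(X_i,\theta_i)$, reduces by Hölder's inequality and $E|\xi_{ni}|^{2\delta_{R2}}<\infty$ (the case $u=\theta$ of (R2)) to the purely numerical inequality $\delta \geq (1+\alpha)/(1-\alpha)$; at the borderline $\alpha=1/(2\beta)$ this reads $\delta = (2\beta+1)/(2\beta-1)$, so the strict inequality in (R2) leaves room to choose $\alpha\in(1/(2\beta),1)$ which also suits (C2). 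Conditions (C1) and (C4) are immediate from the choice $r_n=cn^{-1/2}$ and from (R3).

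The principal technical obstacle is producing the sharp probabilistic bound $P_{f_n}^n(|\rho_n(f_n,h_n)|\geq \varepsilon n^{-\alpha/2}) = O(n^{-\alpha})$ required by (C2), uniformly over $f\in\Sigma^\beta$ and $h\in\mathcal{F}_f(r_n)$: every error contribution --- the centered fluctuation of $\sum_i h_i\Delta_i$, the fluctuation of the squared score sum, and the cubic remainder --- must satisfy this estimate with a \emph{common} exponent $\alpha$. The three thresholds $\beta>1/2$, $\delta_{R1}>1/(2\beta)$ and $\delta_{R2}>(2\beta+1)/(2\beta-1)$ are tuned precisely so that such an $\alpha$ exists; uniformity in $f$ is built in because (R1)--(R3) are stated uniformly in $\theta$ and (R3) bounds $I(\theta)$ away from $0$ and $\infty$, so no extra covering argument is needed.
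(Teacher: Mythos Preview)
Your decomposition of the log-likelihood via the square-root identity $\sqrt{z_{ni}}-1=(h_i/2)\,\overset{\bullet}{l}_{\theta_i}(X_i,\theta_i+h_i)$ followed by a Taylor expansion of $2\log(1+\cdot)$ is exactly the route the paper takes (Proposition~\ref{PROPOS-SE} and Lemmas~\ref{LEMMA-SE-1}--\ref{LEMMA-SE-4}), and your verification of (C3) is correct and makes explicit a step the paper leaves implicit.

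The gap is in your treatment of (C2). For the centered fluctuation of $\sum_i h_i\Delta_i$, condition (R1) gives $E_{\theta_i}\Delta_i^2\le C|h_i|^{2\delta_{R1}}$ and hence $\mathrm{Var}\bigl(\sum_i h_i\Delta_i\bigr)\le Cn^{-\delta_{R1}}$. Chebyshev then yields only
\[
P\Bigl(\bigl|\textstyle\sum_i h_i(\Delta_i-E\Delta_i)\bigr|>\varepsilon n^{-\alpha/2}\Bigr)\le C\varepsilon^{-2}n^{\alpha-\delta_{R1}},
\]
so to get $O(n^{-\alpha})$ you need $\alpha\le\delta_{R1}/2$. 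Combined with the requirement $\alpha>1/(2\beta)$ this forces $\delta_{R1}>1/\beta$, whereas (R1) only assumes $\delta_{R1}\in(1/(2\beta),1)$; for $1/2<\beta\le 1$ one has $\delta_{R1}<1\le 1/\beta$ and your bound falls short. A similar shortfall hits the cubic remainder: a direct moment bound on $\sum_i|h_i\overset{\bullet}{l}_{\theta_i}|^3$ is $O(n^{-1/2})$, and Markov then forces $\alpha\le 1/3$, which again conflicts with $\alpha>1/(2\beta)$ for $\beta<3/2$. So the thresholds in (R1)--(R2) are \emph{not} ``tuned precisely'' for a Chebyshev argument.

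The paper closes this gap by truncating each summand at level $n^{-\alpha/2}$ and applying a Bernstein-type exponential inequality (Lemma~\ref{Lemma APX-1}) to the bounded part; this converts the second-moment information from (R1) into super-polynomial tail decay, while the untruncated tail is handled separately via the $2\delta_{R2}$-th moments from (R2). For the cubic piece (Lemma~\ref{LEMMA-SE-2}) the paper does not sum $|\cdot|^3$ directly but bounds it by $\max_i|\sqrt{z_{ni}}-1|\cdot\sum_i(\sqrt{z_{ni}}-1)^2$, recycling the already-controlled quadratic fluctuation. The exponent windows in (R1) and (R2) are calibrated to this truncation--exponential scheme, not to plain Chebyshev; your sketch needs that extra ingredient to go through for all $\beta>1/2$.
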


The remainder of section \ref{SECTION-Appl} will be devoted to the proof of
this theorem.

\subsection{Stochastic expansion for the likelihood ratio}

The following preliminary stochastic expansion for the likelihood ratio will
lead up to property LASE.

\begin{proposition}
\label{PROPOS-SE}Assume that the density $p(x,\theta )$ satisfies conditions
(R1-R3). Then, for any $f\in \mathcal{F}=\Sigma ^{\beta }$ and $h\in
\mathcal{F}_{f}(n^{-1/2}),$%
\begin{equation*}
\log \frac{dP_{f+h}^{n}}{dP_{f}^{n}}=2X_{n}(f,h)-4V_{n}(f,h)+\rho _{n}(f,h),
\end{equation*}
where
\begin{eqnarray*}
X_{n}(f,h) &=&\sum_{i=1}^{n}\left\{ \left( \sqrt{z_{ni}}-1\right)
-E_{f}^{n}\left( \sqrt{z_{ni}}-1\right) \right\} , \\
V_{n}(f,h) &=&\frac{1}{2}\sum_{i=1}^{n}E_{f}^{n}\left( \sqrt{z_{ni}}%
-1\right) ^{2}
\end{eqnarray*}
and
\begin{equation*}
z_{ni}=\frac{p(X_{i},f(t_{ni})+h(t_{ni}))}{p(X_{i},f(t_{ni}))}.
\end{equation*}
Moreover, there is an $\alpha \in (1/2\beta ,1),$ such that the remainder $%
\rho _{n}(f,h)$ satisfies
\begin{equation*}
\sup_{f\in \mathcal{F}}\sup_{h\in \mathcal{F}_{f}(n^{-1/2})}P_{f}^{n}\left(
\left| \rho _{n}(f,h)\right| >n^{-\alpha /2}\right) =O\left( n^{-\alpha
}\right) .
\end{equation*}
\end{proposition}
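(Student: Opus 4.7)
The plan is to begin with a Taylor expansion of $\log z_{ni}$ around the Hellinger-type variable $w_{ni}=\sqrt{z_{ni}}-1$ and to exploit the identity $2\mathrm{E}_{f}^{n}w_{ni}=-\mathrm{E}_{f}^{n}w_{ni}^{2}$, which follows from $\mathrm{E}_{f}^{n}z_{ni}=1$ since $\mathrm{E}_{f}^{n}(\sqrt{z_{ni}}-1)^{2}=\mathrm{E}_{f}^{n}z_{ni}-2\mathrm{E}_{f}^{n}\sqrt{z_{ni}}+1=-2\mathrm{E}_{f}^{n}(\sqrt{z_{ni}}-1)$. Writing $\log z_{ni}=2\log(1+w_{ni})=2w_{ni}-w_{ni}^{2}+2r(w_{ni})$ with $r(w)=\log(1+w)-w+w^{2}/2$ (so $|r(w)|\leq C|w|^{3}$ for $|w|\leq 1/2$) and inserting the identity inside $\sum_{i}(2w_{ni}-w_{ni}^{2})$ produces the stated main terms $2X_{n}(f,h)-4V_{n}(f,h)$ together with the explicit remainder
\begin{equation*}
\rho_{n}(f,h)=\underbrace{\sum_{i=1}^{n}\bigl(\mathrm{E}_{f}^{n}w_{ni}^{2}-w_{ni}^{2}\bigr)}_{=:R_{1}}+\underbrace{2\sum_{i=1}^{n}r(w_{ni})}_{=:R_{2}}.
\end{equation*}
It then suffices to show $P_{f}^{n}(|R_{j}|>\tfrac{1}{2}n^{-\alpha/2})=O(n^{-\alpha})$ for $j=1,2$ uniformly in $f,h$, for some $\alpha\in(1/(2\beta),1)$.

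For $R_{1}$, I would use the representation $w_{ni}=\tfrac{h(t_{ni})}{2}\overset{\bullet}{l}_{f(t_{ni})}(X_{i},f(t_{ni})+h(t_{ni}))$ inherited from the definition of the extended tangent vector. Because $|h(t_{ni})|\leq\|h\|_{\infty}\leq n^{-1/2}$, condition (R2) gives $\mathrm{E}_{f}^{n}|w_{ni}|^{2\delta}\leq Cn^{-\delta}$ for the $\delta>(2\beta+1)/(2\beta-1)$ of the hypothesis. Since the $X_{i}$ are independent, I would apply a von Bahr--Esseen type moment inequality (or, if $\delta\geq 2$, the variance bound directly) to the centered independent variables $w_{ni}^{2}-\mathrm{E}_{f}^{n}w_{ni}^{2}$, whose $p$-th moment is controlled by that of $w_{ni}^{2}$ for $p=\min(\delta,2)$. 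Combined with Markov's inequality at the level $n^{-\alpha/2}$, this yields the $O(n^{-\alpha})$ estimate for $|R_{1}|$ in the range $\alpha<2(\delta-1)/(2+\delta)$; an elementary calculation confirms that the inequality $\delta>(2\beta+1)/(2\beta-1)$ guarantees $2(\delta-1)/(2+\delta)>1/(2\beta)$, so the desired $\alpha$ exists.

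For $R_{2}$, I would introduce the event $A_{n}=\{\max_{i}|w_{ni}|\leq n^{-a}\}$ for a parameter $a>0$ to be chosen. On $A_{n}$ the cubic bound on $r$ gives $|R_{2}|\mathbf{1}_{A_{n}}\leq Cn^{-a}\sum_{i}w_{ni}^{2}$, and the sum $\sum_{i}w_{ni}^{2}$ is $O_{P}(1)$ since $\mathrm{E}_{f}^{n}\sum_{i}w_{ni}^{2}\leq Cn\|h\|_{\infty}^{2}\mathrm{E}_{f}^{n}|\overset{\bullet}{l}_{\theta}|^{2}=O(1)$ by (R2)--(R3) and Markov's inequality. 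On the complement, the union bound and Markov's inequality with power $2\delta$ give $P_{f}^{n}(A_{n}^{c})\leq n\cdot n^{2a\delta}\max_{i}\mathrm{E}_{f}^{n}|w_{ni}|^{2\delta}\leq Cn^{1-\delta(1-2a)}$. Choosing $a$ in the open interval $(\alpha/2,(\delta-1)/(2\delta))$, which is nonempty exactly under the same hypothesis on $\delta$, makes both contributions $O(n^{-\alpha})$.

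The main obstacle is purely bookkeeping: tracking the interplay between the three free exponents $\alpha$, $\delta$, $a$ so as to pick a single $\alpha\in(1/(2\beta),1)$ that works simultaneously for the bounds on $R_{1}$ and $R_{2}$. The quantitative content of (R2) via the constraint $\delta>(2\beta+1)/(2\beta-1)$ is precisely what is required to render this feasibility problem solvable; weaker moment assumptions would not leave enough room to beat the threshold $1/(2\beta)$ that is dictated by the smoothness $\beta$.
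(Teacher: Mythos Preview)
Your decomposition and algebraic setup are exactly those of the paper: the identity $2\mathrm{E}_f^n w_{ni}=-\mathrm{E}_f^n w_{ni}^2$, the split $\rho_n=R_1+R_2$ with $R_1=-Y_n$ and $R_2=\Psi_n$ in the paper's notation, and the truncation event $A_n$ for $R_2$ all match. The gap is quantitative, and it bites precisely in the regime $\beta\in(1/2,1]$ that the result is meant to cover.

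For $R_1$: when $\beta\le 1$ the hypothesis forces $\delta>(2\beta+1)/(2\beta-1)\ge 3$, so your choice $p=\min(\delta,2)=2$ reduces to a pure variance bound. That gives $\mathrm{E}_f^n R_1^2\le C\sum_i\mathrm{E}_f^n w_{ni}^4\le Cn^{-1}$ and hence $P_f^n(|R_1|>n^{-\alpha/2})\le Cn^{\alpha-1}$, which is $O(n^{-\alpha})$ only for $\alpha\le 1/2$. But you need $\alpha>1/(2\beta)\ge 1/2$, so no admissible $\alpha$ exists. Your displayed threshold $2(\delta-1)/(\delta+2)$ is what the von Bahr--Esseen computation yields for $p=\delta$, and it is \emph{not} delivered by $p=2$; von Bahr--Esseen itself is unavailable for $p>2$. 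The paper obtains $\alpha=(\delta-1)/(\delta+1)$ (which is exactly calibrated to the hypothesis on $\delta$) by truncating $w_{ni}^2$ at level $n^{-\alpha}$ and applying an exponential (Bernstein-type) inequality to the bounded part, handling the overshoot by the $2\delta$-moment bound. A Rosenthal inequality with exponent $\delta$ would also rescue your argument, yielding the slightly weaker but still sufficient threshold $\alpha\le\delta/(\delta+2)$.

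For $R_2$: the statement ``$\sum_i w_{ni}^2=O_P(1)$ by Markov'' is too weak. Markov gives only $P_f^n(\sum_i w_{ni}^2>M)\le C/M$, which after substitution forces $a\ge 3\alpha/2$; combined with your upper constraint on $a$ this leaves no room unless $\beta>3/2$. What actually works (and what the paper does) is to feed the $R_1$ bound back in: on $\{|R_1|\le n^{-\alpha/2}\}$ one has $\sum_i w_{ni}^2=2V_n-R_1\le C$ deterministically, and then $a>\alpha/2$ suffices, exactly as you wrote. So the fix is to replace the Markov step by a direct appeal to the already-established control of $R_1$.
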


\begin{proof}
It is easy to see that
\begin{equation*}
\log \frac{dP_{f+h}^n}{dP_f^n}=\log \prod_{i=1}^nz_{ni}=\sum_{i=1}^n\log
\left( 1+\left( \sqrt{z_{ni}}-1\right) \right) ,
\end{equation*}
where $z_{ni}$ is defined in Proposition \ref{PROPOS-SE}. Note that, in view
of the equalities
\begin{equation*}
2(\sqrt{x}-1)=x-1-(\sqrt{x}-1)^2, \hspace{1cm}  \text{E}_f^nz_{ni}=1
\end{equation*}
we have
\begin{equation*}
 2\text{E}_f^n(\sqrt{z_{ni}}-1)=-\text{E}_f^n(\sqrt{z_{ni}}-1)^2.
\end{equation*}
 By elementary transformations we obtain
\begin{equation*}
\log \frac{dP_{f+n^{-1/2}h}^n}{dP_f^n}=X_n(f,h)-Y_n(f,h)-4V_n(f,h)+\Psi
_n(f,h),
\end{equation*}
where $X_n(f,h),$ $V_n(f,h)$ are defined in Proposition \ref{PROPOS-SE} and
\begin{equation*}
Y_n(f,h)=\sum_{i=1}^n\left\{ \left( \sqrt{z_{ni}}-1\right) ^2-E_f^n\left(
\sqrt{z_{ni}}-1\right) ^2\right\} ,
\end{equation*}
\begin{equation*}
\Psi _n(f,h)=\sum_{i=1}^n\left\{ \log \left( 1+\left( \sqrt{z_{ni}}-1\right)
\right) -2\left( \sqrt{z_{ni}}-1\right) -\left( \sqrt{z_{ni}}-1\right)
^2\right\} .
\end{equation*}
Now the result  follows from  Lemmas \ref{LEMMA-SE-1} and \ref
{LEMMA-SE-2}.
\end{proof}

\begin{lemma}
\label{LEMMA-SE-1}Assume that condition (R2) holds true. Then there is an $%
\alpha \in (1/2\beta ,1)$ such that
\begin{equation*}
\sup_{f\in \mathcal{F}}\sup_{h\in \mathcal{F}_{f}(n^{-1/2})}P_{f}^{n}\left(
\left| Y_{n}(f,h)\right| >n^{-\alpha /2}\right) =O\left( n^{-\alpha }\right)
.
\end{equation*}
\end{lemma}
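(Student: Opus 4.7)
The plan is to recognize $Y_n(f,h)$ as a sum of centered independent random variables with good moment bounds, and then apply a standard moment deviation inequality whose exponent is calibrated against the hypothesis on $\delta$ in (R2).

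\textbf{Step 1: Rewriting in terms of the extended tangent vector.} By definition (\ref{EXTEND-TG-V}), for $g = f+h$ one has
\begin{equation*}
\sqrt{z_{ni}} - 1 = \tfrac{1}{2}\, h(t_{ni})\, \overset{\bullet}{l}_{f(t_{ni})}(X_i, g(t_{ni})),
\end{equation*}
so the $i$-th summand of $Y_n$ is $\xi_i := A_i - E_f^n A_i$ with $A_i = \tfrac{1}{4} h(t_{ni})^2 |\overset{\bullet}{l}_{f(t_{ni})}(X_i, g(t_{ni}))|^2$. Since $h \in \mathcal{F}_f(n^{-1/2})$, one has $|h(t_{ni})| \le L\, n^{-1/2}$ and $|g(t_{ni}) - f(t_{ni})| \le L\, n^{-1/2} < \varepsilon$ for $n$ large, so (R2) applies.

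\textbf{Step 2: Per-term moment bound.} For any $p \in (1, \delta]$, Jensen's inequality combined with (R2) gives $E_f^n |\overset{\bullet}{l}_{f(t_{ni})}(X_i, g(t_{ni}))|^{2p} \le C$ uniformly in $f \in \mathcal{F}$ and $h \in \mathcal{F}_f(n^{-1/2})$. Hence
\begin{equation*}
E_f^n |\xi_i|^p \le 2^p\, E_f^n |A_i|^p \le C\, h(t_{ni})^{2p} \le C\, n^{-p}.
\end{equation*}

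\textbf{Step 3: Moment bound for $Y_n$.} Since the $\xi_i$ are independent and centered, a moment inequality yields: for $1 < p \le 2$, by the von Bahr--Esseen inequality, $E_f^n |Y_n|^p \le C \sum_i E_f^n |\xi_i|^p \le C\, n^{1-p}$; for $p > 2$, by Rosenthal's inequality, $E_f^n |Y_n|^p \le C\{\sum_i E_f^n |\xi_i|^p + (\sum_i E_f^n \xi_i^2)^{p/2}\} \le C(n^{1-p} + n^{-p/2}) \le C\, n^{-p/2}$. Markov's inequality then gives
\begin{equation*}
P_f^n\bigl(|Y_n| > n^{-\alpha/2}\bigr) \le n^{\alpha p/2}\, E_f^n |Y_n|^p,
\end{equation*}
whose right-hand side is $O(n^{\alpha p/2 + 1 - p})$ when $p \le 2$ and $O(n^{p(\alpha-1)/2})$ when $p > 2$.

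\textbf{Step 4: Verifying the parameter constraints.} We require an $\alpha \in (1/(2\beta), 1)$ together with $p \in (1, \delta]$ so that the above exponent is at most $-\alpha$. The main (and really only) obstacle is this numerology. In the Rosenthal regime one needs $p \ge 2\alpha/(1-\alpha)$; letting $\alpha \downarrow 1/(2\beta)$ this lower bound tends to $2/(2\beta-1)$. The hypothesis $\delta > (2\beta+1)/(2\beta-1)$ precisely guarantees $\delta > 2/(2\beta-1)$ (since $2\beta+1 > 2$ for $\beta > 1/2$), so one can pick $p \in (2/(2\beta-1), \delta]$ and then $\alpha \in (1/(2\beta), 1)$ slightly above $1/(2\beta)$ so that $p \ge 2\alpha/(1-\alpha)$ holds strictly; the analogous check in the von Bahr--Esseen regime (relevant when $\beta > 3/2$) gives the parallel condition $p \ge 2(1+\alpha)/(2-\alpha)$, again compatible with our range of $\delta$. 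With such $(p,\alpha)$ fixed, the bounds above are uniform in $f \in \mathcal{F}$ and $h \in \mathcal{F}_f(n^{-1/2})$, yielding the desired $O(n^{-\alpha})$ with $\alpha \in (1/(2\beta), 1)$.
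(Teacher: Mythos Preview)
Your proof is correct, but it proceeds by a genuinely different route than the paper's.

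The paper truncates each term: writing $\xi_{ni}=\sqrt{z_{ni}}-1$, it splits $\xi_{ni}^{2}$ according to whether $|\xi_{ni}|\le n^{-\alpha/2}$ or not, with the explicit choice $\alpha=(\delta-1)/(\delta+1)$. The bounded part of the centered sum is controlled by an exponential Chebyshev bound (via the elementary Lemma~\ref{Lemma APX-1}), while the large part is handled by the $2\delta$-moment bound from (R2) through an inequality of the type $P(\max_i|\xi_{ni}|>n^{-\alpha/2})\le c\,n^{-\alpha}$. By contrast, you avoid truncation entirely: you bound $E_f^n|Y_n|^p$ directly via von~Bahr--Esseen ($1<p\le 2$) or Rosenthal ($p>2$), using $E_f^n|\xi_i|^p\le Cn^{-p}$, and then apply Markov. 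The numerology you carry out in Step~4 correctly identifies a feasible pair $(p,\alpha)$ with $p\le\delta$ and $\alpha\in(1/2\beta,1)$; in particular, your observation that for $\beta>3/2$ the hypothesis allows $\delta<2$ and hence forces the von~Bahr--Esseen regime is exactly the right case distinction.

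Your approach is shorter and conceptually clean, at the price of importing the Rosenthal and von~Bahr--Esseen inequalities as black boxes; the paper's truncation argument is more self-contained (it uses only the elementary exponential bound of Lemma~\ref{Lemma APX-1}) and produces an explicit formula for $\alpha$ in terms of $\delta$, whereas your $\alpha$ is determined only implicitly through the inequalities in Step~4.
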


\begin{proof}
Let $\delta \in (\frac{2\beta +1}{2\beta -1},\infty )$ be the real number
for which condition (R2) holds true (recall that $\beta \geq \frac 12$). Let
$\alpha =\frac{\delta -1}{\delta +1},$ which clearly is in the interval $%
(\frac 1{2\beta },1).$ Then there is an $\alpha ^{\prime }\in (\frac
1{2\beta },1)$ such that $\alpha ^{\prime }<\alpha .$ Set for $i=1,...,n,$%
\begin{equation}
\xi _{ni}=\sqrt{z_{ni}}-1,\quad \xi _{ni}^{\prime }=\xi _{ni}^21(\left| \xi
_{ni}\right| \leq n^{-\alpha /2}),\quad \xi _{ni}^{\prime \prime }=\xi
_{ni}^21(\left| \xi _{ni}\right| >n^{-\alpha /2})  \label{PP-1}
\end{equation}
and
\begin{equation}
\eta _{ni}^{\prime }=\xi _{ni}^{\prime }-E_f^n\xi _{ni}^{\prime },\quad \eta
_{ni}^{\prime \prime }=\xi _{ni}^{\prime \prime }-E_f^n\xi _{ni}^{\prime
\prime }.  \label{PP-2}
\end{equation}
With the above notations, we write $Y_n$ as follows:
\begin{equation}
Y_n=\sum_{i=1}^n\eta _{ni}^{\prime }+\sum_{i=1}^n\eta _{ni}^{\prime \prime }.
\label{PP-3}
\end{equation}
For the first term on the right-hand side of (\ref{PP-3}) we have
\begin{equation}
I_1\equiv P_f^n\left( \sum_{i=1}^n\eta _{ni}^{\prime }>\frac 12n^{-\alpha
^{\prime }/2}\right) \leq \exp \left( -\frac 12n^{(\alpha -\alpha ^{\prime
})/2}\right) \prod_{i=1}^nE_f^n\exp \left( n^{\alpha /2}\eta _{ni}^{\prime
}\right) ,  \label{PP-3-1}
\end{equation}
where the r.v.'s $n^{\alpha /2}\eta _{ni}$ are bounded by $2n^{-\alpha
/2}\leq 2.$ According to Lemma \ref{Lemma APX-1}, with $\lambda =1,$ one
obtains
\begin{equation}
E_f^n\exp \left( n^{\alpha /2}\eta _{ni}^{\prime }\right) \leq \exp \left(
cn^\alpha E_f^n(\eta _{ni}^{\prime })^2\right) ,\quad i=1,...,n.
\label{PP-3-2}
\end{equation}
Using (\ref{PP-2}) and (\ref{PP-1}),
\begin{equation}
E_f^n(\eta _{ni}^{\prime })^2\leq 2n^{-\alpha }E_f^n(\sqrt{z_{ni}}%
-1)^2,\quad i=1,...,n.  \label{PP-3-3}
\end{equation}
Set for brevity
\begin{equation*}
\overset{\bullet }{l}_{ni}(f,h)=\overset{\bullet }{l}%
_{f(t_{ni})}(X_i,f(t_{ni})+h(t_{ni})),\quad i=1,...,n,
\end{equation*}
where $\overset{\bullet }{l}_\theta (x,u)$ is the extended tangent vector
defined by (\ref{EXTEND-TG-V}) and $f\in \mathcal{F},$ $h\in \mathcal{F}%
_f(n^{-1/2}).$ With these notations,
\begin{equation}
\sqrt{z_{ni}}-1=h(t_{ni})\overset{\bullet }{l}_{ni}(f,h),\quad i=1,...,n,
\label{PP-3-4}
\end{equation}
where $n^{1/2}h\in \mathcal{H}(\beta ,L).$ Condition (R2) and $\left\|
n^{1/2}h\right\| _\infty \leq L$ imply
\begin{equation}
E_f^n(\sqrt{z_{ni}}-1)^2\leq cn^{-1},\quad i=1,...,n.  \label{PP-3-5}
\end{equation}
Inserting  these bounds into (\ref{PP-3-3}) and then invoking  the bounds obtained in (%
\ref{PP-3-2}), we obtain
\begin{equation*}
\prod_{i=1}^nE_f^n\exp \left( n^{\alpha /2}\eta _{ni}^{\prime }\right) \leq
\exp \left( c_1\right) \leq c_2.
\end{equation*}
Then, since $\alpha >\alpha ^{\prime },$ from (\ref{PP-3-1}), we  the
estimate $I_1=O(n^{-\alpha ^{\prime }})$ follows. In the same way we establish a
bound for the lower tail probability.

Now consider  the second term on the right-hand side of (\ref{PP-3}%
). For this we note that by (\ref{PP-3-4}),
\begin{equation}
\sum_{i=1}^nE_f^n\left| \sqrt{z_{ni}}-1\right| ^{2\delta }\leq cn^{-\delta
}\sum_{i=1}^nE_f^n|\overset{\bullet }{l}_{ni}(f,h)|^{2\delta }\leq
cn^{1-\delta }.  \label{PP-5}
\end{equation}
By virtue of (\ref{PP-1}) and (\ref{PP-5}),
\begin{equation*}
\sum_{i=1}^nE_f^n\xi _{ni}^{\prime \prime }\leq cn^{\alpha (\delta
-1)}\sum_{i=1}^nE_f^n\left| \sqrt{z_{ni}}-1\right| ^{2\delta }\leq
cn^{\alpha (\delta -1)}n^{1-\delta }=cn^{-2\alpha }.
\end{equation*}
Then since $\frac 12n^{-\alpha ^{\prime }/2}-cn^{-2\alpha }$ is positive
for $n$ large enough, we get
\begin{eqnarray*}
I_2 &\equiv &P_f^n\left( \left| \sum_{i=1}^n\eta _{ni}^{\prime \prime
}\right| >\frac 12n^{-\alpha ^{\prime }/2}\right) \leq P_f^n\left(
\sum_{i=1}^n\xi _{ni}^{\prime \prime }>\frac 12n^{-\alpha ^{\prime
}/2}-cn^{-2\alpha }\right) \\
&\leq &P_f^n\left( \max_{1\leq i\leq n}\left| \sqrt{z_{ni}}-1\right|
>n^{-\alpha /2}\right) .
\end{eqnarray*}
The last probability can be bounded, using (\ref{PP-5}), in the following way:
for any absolute constant $c>0,$%
\begin{equation}
P_f^n\left( \max_{1\leq i\leq n}\left| \sqrt{z_{ni}}-1\right| >cn^{-\alpha
/2}\right) \leq c_1n^{\alpha \delta }\sum_{i=1}^nE_f^n\left| \sqrt{z_{ni}}%
-1\right| ^{2\delta }\leq c_2n^{\alpha \delta }n^{1-\delta }=c_2n^{-\alpha }.
\label{PP-6}
\end{equation}
This yields  $I_2=O(n^{-\alpha ^{\prime }}).$ The bounds for $I_1$ (with
the corresponding bound of the lower tail) and for $I_2,$ in conjunction  with (\ref
{PP-3}), obviously imply the  assertion.
\end{proof}

\begin{lemma}
\label{LEMMA-SE-2}Assume that condition (R2) holds true. Then, there is an $%
\alpha \in (1/2\beta ,1),$ such that
\begin{equation*}
\sup_{f\in \mathcal{F}}\sup_{h\in \mathcal{F}_{f}(n^{-1/2})}P\left( \left|
\Psi _{n}(f,h)\right| >n^{-\alpha /2}\right) =O\left( n^{-\alpha }\right) .
\end{equation*}
\end{lemma}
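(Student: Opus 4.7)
The strategy is to exploit the fact that each summand of $\Psi_n$ is a \emph{third-order} Taylor remainder at $\xi_{ni}=\sqrt{z_{ni}}-1=0$: the function $\phi(x)=\log(1+x)-2x-x^{2}$ (or, what amounts to the same up to a sign of the $\xi_{ni}^{2}$-coefficient, its correctly-normalized cousin $2\log(1+x)-2x+x^{2}$) satisfies a bound of the form $|\phi(x)-\phi(0)-\phi'(0)x|\le C|x|^{3}$ on $|x|\le 1/2$, so once I control (i) the maximum $\max_{i}|\xi_{ni}|$ and (ii) the total second moment $\sum_i\xi_{ni}^{2}$, the bound on $|\Psi_n|$ follows essentially for free.

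Step 1. Take $\alpha_{0}=(\delta-1)/(\delta+1)$, which by condition (R2) lies in $(1/(2\beta),1)$, exactly as in the proof of Lemma~\ref{LEMMA-SE-1}. Introduce the truncation event $A_{n}=\{\max_{1\le i\le n}|\xi_{ni}|\le n^{-\alpha_{0}/2}\}$. Bound (\ref{PP-6}) already established there yields
\[
\sup_{f\in\mathcal F}\sup_{h\in\mathcal F_f(n^{-1/2})}P_f^n(A_n^c)=O\bigl(n^{-\alpha_{0}}\bigr).
\]

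Step 2. On $A_{n}$, and for $n$ large enough so that $n^{-\alpha_{0}/2}\le 1/2$, the cubic Taylor bound gives
\[
|\Psi_n(f,h)|\le C\sum_{i=1}^{n}|\xi_{ni}|^{3}\le C n^{-\alpha_{0}/2}\sum_{i=1}^{n}\xi_{ni}^{2}.
\]
To control $\sum_i\xi_{ni}^{2}$, I decompose $\sum_i\xi_{ni}^{2}=Y_n(f,h)+\sum_i E_f^n\xi_{ni}^{2}$. The deterministic piece is $O(1)$ uniformly in $f,h$ by (\ref{PP-3-5}) together with $\|n^{1/2}h\|_\infty\le L$, while Lemma~\ref{LEMMA-SE-1} provides an event $B_n$ with $P_f^n(B_n^c)=O(n^{-\alpha_{0}})$ on which $|Y_n|\le n^{-\alpha_{0}/2}\le 1$. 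Hence $\sum_i\xi_{ni}^{2}\le C'$ on $B_n$, and so $|\Psi_n(f,h)|\le CC' n^{-\alpha_{0}/2}$ on $A_n\cap B_n$.

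Step 3. Finally, pick any $\alpha\in(1/(2\beta),\alpha_{0})$. For $n$ large, $CC' n^{-\alpha_{0}/2}\le n^{-\alpha/2}$, whence
\[
\sup_{f\in\mathcal F}\sup_{h\in\mathcal F_f(n^{-1/2})}P_f^n\bigl(|\Psi_n(f,h)|>n^{-\alpha/2}\bigr)\le \sup_{f,h}P_f^n(A_n^c)+\sup_{f,h}P_f^n(B_n^c)=O\bigl(n^{-\alpha_{0}}\bigr)=O\bigl(n^{-\alpha}\bigr),
\]
which is the claim. The only mildly subtle point is the absorption of the multiplicative constants $C,C'$ coming from the Taylor bound and from $\sum_i E_f^n\xi_{ni}^{2}$; this is handled, as in the last paragraph of the proof of Lemma~\ref{LEMMA-SE-1}, by working with a slightly smaller exponent $\alpha<\alpha_{0}$ still lying above $1/(2\beta)$. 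No probabilistic input beyond condition (R2), estimate (\ref{PP-3-5}) and Lemma~\ref{LEMMA-SE-1} itself is required.
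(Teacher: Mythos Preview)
Your proposal is correct and follows essentially the same route as the paper's proof: both arguments rest on the cubic Taylor bound $|\psi(\xi)|\le C|\xi|^{3}$ for small $|\xi|$, the decomposition $\sum_i\xi_{ni}^{2}=Y_n+\sum_iE_f^n\xi_{ni}^{2}$ controlled via Lemma~\ref{LEMMA-SE-1} and (\ref{PP-3-5}), and the maximal bound (\ref{PP-6}); the only cosmetic difference is that you work on the global event $A_n=\{\max_i|\xi_{ni}|\le n^{-\alpha_0/2}\}$ whereas the paper splits $\Psi_n=\Psi_1+\Psi_2$ term by term, which is the same thing since $\Psi_2$ vanishes on $A_n$.
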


\begin{proof}
We keep the notations from Lemma \ref{LEMMA-SE-1}. Additionally set for $%
i=1,...,n,$
\begin{equation*}
\psi _{ni}=\log (1+(\sqrt{z_{ni}}-1))-2(\sqrt{z_{ni}}-1)+(\sqrt{z_{ni}}-1)^2.
\end{equation*}
Then we can represent $\Psi _n(f,h)$ as follows: $\Psi _n(f,h)=\Psi _1+\Psi
_2,$ where
\begin{equation*}
\Psi _1=\sum_{i=1}^n\psi _{ni}\mathbf{1}\left( |\sqrt{z_{ni}}-1|\leq
n^{-\alpha /2}\right) ,\quad \Psi _2=\sum_{i=1}^n\psi _{ni}\mathbf{1}\left( |%
\sqrt{z_{ni}}-1|>n^{-\alpha /2}\right) .
\end{equation*}
Assume that $n$ is large enough so that $n^{-\alpha /2}\leq 1/2.$ Then a simple
Taylor expansion gives $\left| \psi _{ni}\right| \leq c|\sqrt{%
z_{ni}}-1|^3,$ provided that $|\sqrt{z_{ni}}-1|\leq n^{-\alpha /2}.$ This in
turn implies
\begin{equation*}
\left| \Psi _1\right| \leq c\max_{1\leq i\leq n}|\sqrt{z_{ni}}%
-1|\sum_{i=1}^n(\sqrt{z_{ni}}-1)^2.
\end{equation*}
Since by (\ref{PP-3-5}) one has $E_f^n(\sqrt{z_{ni}}-1)^2\leq cn^{-1},$ we
obtain
\begin{equation*}
\left| \Psi_1\right|  \leq c_1\max_{1\leq i\leq
n}|\sqrt{z_{ni}}-1|\left( \left| Y_n\right| +c_2\right) .
\end{equation*}
Therefore
\begin{equation*}
P\left( \left| \Psi _1\right| \geq \frac 12n^{-\alpha ^{\prime }/2}\right)
\leq P\left( \left| Y_n\right| >n^{-\alpha ^{\prime }/2}\right) +P\left(
\max_{1\leq i\leq n}|\sqrt{z_{ni}}-1|>cn^{-\alpha ^{\prime }/2}\right) .
\end{equation*}
Now from Lemma \ref{LEMMA-SE-1} and (\ref{PP-6}) we obtain  the
bound
\begin{equation}
P\left( \left| \Psi _1\right| \geq n^{-\alpha ^{\prime }/2}\right)
=O(n^{-\alpha ^{\prime }}).  \label{PSI-1-1}
\end{equation}
As to $\Psi _2,$ we have
\begin{equation*}
\left\{ \left| \Psi _2\right| >\frac 12n^{-\alpha ^{\prime }/2}\right\}
\subset \left\{ \max_{1\leq i\leq n}|\sqrt{z_{ni}}-1|>cn^{-\alpha ^{\prime
}/2}\right\} ,
\end{equation*}
from which we deduce, by (\ref{PP-6}),
\begin{equation}
P\left( \left| \Psi _2\right| >\frac 12n^{-\alpha ^{\prime }/2}\right) \leq
P\left( \max_{1\leq i\leq n}|\sqrt{z_{ni}}-1|>cn^{-\alpha ^{\prime
}/2}\right) =O(n^{-\alpha ^{\prime }}).  \label{PSI-1-2}
\end{equation}
The result   follows  from (\ref{PSI-1-1}) and (\ref
{PSI-1-2}).
\end{proof}

\subsection{Proof of Theorem \ref{THEOREM-APPL-1}}

We split the proof into two lemmas, in such a way that Theorem \ref
{THEOREM-APPL-1} follows immediately from Proposition \ref{PROPOS-SE} and
these lemmas.

Set
\begin{equation*}
M_n(f,h)=\sum_{i=1}^nh(t_{ni})\overset{\bullet }{l}_{ni}(f),
\end{equation*}
where $\overset{\bullet }{l}_{ni}(f)=\overset{\bullet }{l}(X_i,f(t_{ni}))$
and $\overset{\bullet }{l}(x,\theta )$ is the tangent vector defined by (\ref
{tan-vect}).

\begin{lemma}
\label{LEMMA-SE-3}Assume that conditions (R1-R3) hold true. Then there is an
$\alpha \in (1/2\beta ,1)$ such that for $\mathcal{F}=\Sigma ^{\beta }$
\begin{equation*}
\sup_{f\in \mathcal{F}}\sup_{h\in \mathcal{F}_{f}(n^{-1/2})}P_{f}^{n}\left(
\left| 2X_{n}(f,h)-M_{n}(f,h)\right| >n^{-\alpha /2}\right) =O\left(
n^{-\alpha }\right) .
\end{equation*}
\end{lemma}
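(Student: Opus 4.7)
The plan is to set $\eta_{ni} := 2(\sqrt{z_{ni}}-1) - h(t_{ni})\overset{\bullet }{l}(X_i, f(t_{ni}))$. Since the tangent vector has mean zero under $P_f$, one has $E_f^n M_n(f,h) = 0$, so combined with the built-in centering of $X_n(f,h)$ this yields
\begin{equation*}
2X_n(f,h) - M_n(f,h) = \sum_{i=1}^n \bigl(\eta_{ni} - E_f^n \eta_{ni}\bigr).
\end{equation*}
Using (\ref{tan-vect}) to write $\eta_{ni} = (2/s(X_i, f(t_{ni})))[s(X_i, f(t_{ni})+h(t_{ni})) - s(X_i, f(t_{ni})) - h(t_{ni})\overset{\bullet }{s}(X_i, f(t_{ni}))]$, condition (R1) with its $\delta_1 \in (1/(2\beta), 1)$ gives the sharp variance bound
$E_f^n \eta_{ni}^2 \le C\,|h(t_{ni})|^{2(1+\delta_1)} \le Cn^{-(1+\delta_1)}$,
hence $\sum_i E_f^n \eta_{ni}^2 \le Cn^{-\delta_1}$. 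Writing $\eta_{ni}$ equivalently via the extended tangent vector as $h(t_{ni})[\overset{\bullet }{l}_{f(t_{ni})}(X_i,f+h) - \overset{\bullet }{l}(X_i,f)]$, condition (R2) with its $\delta_2 > (2\beta+1)/(2\beta-1)$ yields the moment bound $E_f^n|\eta_{ni}|^{2\delta_2} \le Cn^{-\delta_2}$.

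A pure Chebyshev bound on the centered sum would only give $O(n^{\alpha-\delta_1})$, which just fails to match the required $O(n^{-\alpha})$ when $\delta_1$ is close to $1/(2\beta)$. Instead I would truncate $\eta_{ni}^{tr} := \eta_{ni}\mathbf{1}(|\eta_{ni}|\le M)$ at $M := n^{-1/2+a}$ for a parameter $a$ to be chosen, and decompose the centered sum into $\sum_i(\eta_{ni}^{tr} - E_f^n\eta_{ni}^{tr}) - \sum_i E_f^n(\eta_{ni}-\eta_{ni}^{tr})$ plus a term supported on $\bigcup_i\{|\eta_{ni}|>M\}$. By Markov with moment $2\delta_2$,
\begin{equation*}
\sum_{i=1}^n P_f^n(|\eta_{ni}|>M) \le n\,M^{-2\delta_2}\cdot Cn^{-\delta_2} = O\bigl(n^{1-2a\delta_2}\bigr),
\end{equation*}
which is $O(n^{-\alpha})$ as soon as $a \ge (1+\alpha)/(2\delta_2)$. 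A H\"older estimate then shows that the deterministic shift $\sum_i E_f^n(\eta_{ni}-\eta_{ni}^{tr})$ is of order $n^{1/2 - a(2\delta_2-1)}$, which is $o(n^{-\alpha/2})$ under the same constraint (since $2\delta_2 - 1 > \delta_2$). For the truncated centered sum, $|\eta_{ni}^{tr}-E_f^n\eta_{ni}^{tr}|\le 2M$ a.s.\ and the summed variance is at most $Cn^{-\delta_1}$, so Bernstein's inequality yields
\begin{equation*}
P_f^n\Bigl(\Bigl|\sum_i (\eta_{ni}^{tr} - E_f^n\eta_{ni}^{tr})\Bigr| > \tfrac{1}{3}n^{-\alpha/2}\Bigr) \le 2\exp\bigl(-c\min(n^{\delta_1-\alpha},\, n^{1/2-a-\alpha/2})\bigr),
\end{equation*}
which is super-polynomially small provided $\alpha < \delta_1$ and $a < (1-\alpha)/2$.

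The main obstacle is the simultaneous matching of the constants: one needs $\alpha \in (1/(2\beta), 1)$, $\alpha < \delta_1$, and $(1+\alpha)/(2\delta_2) \le a < (1-\alpha)/2$. Solvability of the last in $a$ is equivalent to $\delta_2 > (1+\alpha)/(1-\alpha)$; at $\alpha = 1/(2\beta)$ this reads exactly $\delta_2 > (2\beta+1)/(2\beta-1)$, i.e., precisely condition (R2). By continuity the strict inequality persists for $\alpha$ slightly above $1/(2\beta)$, and the room left by (R1), $\delta_1 > 1/(2\beta)$, allows this $\alpha$ to be chosen also below $\delta_1$. This delicate matching of the regularity exponents --- which explains why the seemingly arbitrary lower bounds on $\delta$ in (R1) and (R2) take exactly the form they do --- is where both regularity conditions enter in their sharp form.
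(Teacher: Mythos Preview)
Your proof is correct and follows essentially the same strategy as the paper: write $2X_n(f,h)-M_n(f,h)$ as a sum of independent centered terms, use (R1) for the variance bound and (R2) for the $2\delta_2$-moment bound, truncate, and control the bounded part by an exponential (Bernstein-type) inequality.

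The one substantive difference is in the treatment of the tail. The paper truncates at level $n^{-\alpha/2}$ and then bounds $P_f^n\bigl(|\sum_i \eta_{ni}''|>\tfrac12 n^{-\alpha'/2}\bigr)$ directly by Chebyshev, estimating $\sum_i E_f^n(\eta_{ni}'')^2$ via the $2\delta_2$-moments; this yields the tail contribution in one step, without your separate union bound on $\bigcup_i\{|\eta_{ni}|>M\}$ and without your separate control of the deterministic shift $\sum_i E_f^n(\eta_{ni}-\eta_{ni}^{tr})$. The paper also makes the explicit choice $\alpha=\min\{\delta_1,(\delta_2-1)/(\delta_2+1)\}$ (and then works at some $\alpha'<\alpha$), which is exactly the solution of your solvability condition $\delta_2>(1+\alpha)/(1-\alpha)$; so your continuity argument and the paper's explicit choice lead to the same admissible range for $\alpha$. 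Either route works, and the constraint-matching you highlight is precisely the content of the paper's choice of $\alpha$.
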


\begin{proof}
Let $\delta _1\in (\frac 1{2\beta },1)$ and $\delta _2\in (\frac{2\beta +1}{%
2\beta -1},\infty )$ be respectively the real numbers for which
conditions (R1) and (R2) hold true, where $\beta \geq \frac 12.$ Let $%
\alpha =\min \{\delta _1,\frac{\delta _2-1}{\delta _2+1}\},$ which clearly
is in the interval $(\frac 1{2\beta },1).$ Then there is an $\alpha ^{\prime
}\in (\frac 1{2\beta },1)$ such that $\alpha ^{\prime }<\alpha .$ Denote
for $i=1,...,n,$%
\begin{equation}
\xi _{ni}=2\left( \sqrt{z_{ni}}-1\right) -2E_f^n\left( \sqrt{z_{ni}}%
-1\right) -h(t_{ni})\overset{\bullet }{l}_{ni}(f),  \label{SE3-0}
\end{equation}
\begin{equation}
\xi _{ni}^{\prime }=\xi _{ni}1(\left| \xi _{ni}\right| \leq n^{-\alpha
/2}),\quad \xi _{ni}^{\prime \prime }=\xi _{ni}1(\left| \xi _{ni}\right|
>n^{-\alpha /2})  \label{SE3-1}
\end{equation}
and
\begin{equation}
\eta _{ni}^{\prime }=\xi _{ni}^{\prime }-E_f^n\xi _{ni}^{\prime },\quad \eta
_{ni}^{\prime \prime }=\xi _{ni}^{\prime \prime }-E_f^n\xi _{ni}^{\prime
\prime }.  \label{SE3-2}
\end{equation}
With these notations
\begin{equation}
2X_n(f,h)-M_n(f,h)=\sum_{i=1}^n\eta _{ni}^{\prime }+\sum_{i=1}^n\eta
_{ni}^{\prime \prime }.  \label{SE3-3}
\end{equation}
Consider  the first term on the right-hand side of (\ref{SE3-3}). Since
the r.v.'s $n^{\alpha /2}\eta _{ni}$ are bounded by $2,$ we have by Lemma
\ref{Lemma APX-1} with $\lambda =1,$%
\begin{equation*}
E_f^n\exp \left( n^{\alpha /2}\eta _{ni}^{\prime }\right) \leq \exp \left(
cn^\alpha E_f^n(\eta _{ni}^{\prime })^2\right) ,\quad i=1,...,n.
\end{equation*}
Then
\begin{equation}
I_1\equiv P_f^n\left( \sum_{i=1}^n\eta _{ni}^{\prime }>\frac 12n^{-\alpha
^{\prime }/2}\right) \leq \exp \left( -\frac 12n^{\left( \alpha -\alpha
^{\prime }\right) /2}+cn^\alpha \sum_{i=1}^nE_f^n(\eta _{ni}^{\prime
})^2\right) .  \label{SE3-4}
\end{equation}
>From assumption (R1)  we easily obtain
\begin{equation*}
\sum_{i=1}^nE_f^n(\eta _{ni}^{\prime })^2\leq 2\sum_{i=1}^nE_f^n\xi
_{ni}^2\leq cn^{-2\delta _1}\leq cn^{-2\alpha },
\end{equation*}
which in conjunction  with (\ref{SE3-4})  implies $I_1=O(n^{-\alpha ^{\prime }}).$
The bound for the lower tail can be established analogously.

For an estimate of  the second term in the right-hand side of (\ref{SE3-3}), we note
that  (\ref{SE3-0}), (\ref{SE3-1}), (\ref{SE3-2}) imply
\begin{equation*}
\sum_{i=1}^nE_f^n\left| \eta _{ni}^{\prime \prime }\right| ^2\leq cn^{\alpha
(\delta -1)}\left( \sum_{i=1}^nE_f^n\left| \sqrt{z_{ni}}-1\right| ^{2\delta
}+\frac c{n^\delta }\sum_{i=1}^nE_f^n|\overset{\bullet }{l}%
_{ni}(f)|^{2\delta }\right) .
\end{equation*}
Now assumption (R1) and (\ref{PP-5}) imply
\begin{equation*}
\sum_{i=1}^nE_f^n\left| \eta _{ni}^{\prime \prime }\right| ^{2\delta }\leq
cn^{\alpha (\delta -1)-1-\delta }=cn^{-2\alpha }.
\end{equation*}
Thus we obtain
\begin{equation*}
I_2\equiv P_f^n\left( \sum_{i=1}^n\eta _{ni}^{\prime \prime }>\frac
12n^{-\alpha ^{\prime }/2}\right) \leq cn^{\alpha ^{\prime
}}\sum_{i=1}^nE_f^n\left| \eta _{ni}^{\prime \prime }\right| ^2\leq
cn^{\alpha ^{\prime }-2\alpha }\leq cn^{-\alpha ^{\prime }}.
\end{equation*}
The bounds for $I_1$ (with the corresponding bound of the lower tail) and
for $I_2,$ in conjunction  with (\ref{SE3-3}) imply the lemma.
\end{proof}

\begin{lemma}
\label{LEMMA-SE-4}Assume that conditions (R1-R3) hold true. Then, there is
an $\alpha \in (1/2\beta ,1),$ such that for $\mathcal{F}=\Sigma ^{\beta }$
\begin{equation*}
\sup_{f\in \mathcal{F}}\sup_{h\in \mathcal{F}_{f}(n^{-1/2})}\left|
V_{n}(f,h)-\frac{1}{8}\sum_{i=1}^{n}h(t_{ni})^{2}I(f(t_{ni}))\right| \leq
n^{-\alpha /2}.
\end{equation*}
\end{lemma}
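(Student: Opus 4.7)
My plan is to reduce the statement to a pointwise (in $i$) estimate on the Hellinger-type quantity $E_f^n(\sqrt{z_{ni}}-1)^2$, obtained by linearizing $s(\cdot,\theta)$ around $\theta = f(t_{ni})$ via condition (R1), and then to sum the resulting errors using $\|h\|_{\infty}\le cn^{-1/2}$.

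First, changing to the measure $\mu$ and using the definition of $z_{ni}$, I would write
\begin{equation*}
E_f^n(\sqrt{z_{ni}}-1)^2=\int_X\bigl(s(x,f(t_{ni})+h(t_{ni}))-s(x,f(t_{ni}))\bigr)^2\mu(dx)=\|A_i\|_{L^2(\mu)}^2,
\end{equation*}
where $A_i=s(\cdot,f(t_{ni})+h(t_{ni}))-s(\cdot,f(t_{ni}))$. Setting $B_i=h(t_{ni})\,\overset{\bullet}{s}(\cdot,f(t_{ni}))$, condition (R1) applied with $u=f(t_{ni})+h(t_{ni})$ and $\theta=f(t_{ni})$ (note that $|h(t_{ni})|\le Lr_n\le\varepsilon$ for $n$ large) yields $\|A_i-B_i\|_{L^2(\mu)}\le C|h(t_{ni})|^{1+\delta}$ for the $\delta\in(1/(2\beta),1)$ provided by (R1). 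On the other hand, from \eqref{tan-vect} and \eqref{Fisher-inf} one has $\|\overset{\bullet}{s}(\cdot,\theta)\|_{L^2(\mu)}^2=\tfrac14 I(\theta)$, so
\begin{equation*}
\|B_i\|_{L^2(\mu)}^2=\tfrac14\,h(t_{ni})^2 I(f(t_{ni})).
\end{equation*}

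Next, the elementary bound $\bigl|\|A_i\|^2-\|B_i\|^2\bigr|\le 2\|A_i-B_i\|\cdot\|B_i\|+\|A_i-B_i\|^2$, together with $\|B_i\|\le\tfrac12|h(t_{ni})|\sqrt{I_{\max}}$ from (R3), gives
\begin{equation*}
\left|E_f^n(\sqrt{z_{ni}}-1)^2-\tfrac14\,h(t_{ni})^2 I(f(t_{ni}))\right|\le C|h(t_{ni})|^{2+\delta}.
\end{equation*}
Summing over $i$ and using $|h(t_{ni})|\le Lr_n=cn^{-1/2}$,
\begin{equation*}
\left|V_n(f,h)-\tfrac18\sum_{i=1}^n h(t_{ni})^2 I(f(t_{ni}))\right|\le\tfrac{C}{2}\sum_{i=1}^n|h(t_{ni})|^{2+\delta}\le C'n\cdot n^{-(2+\delta)/2}=C'n^{-\delta/2}.
\end{equation*}
Since the $\delta$ in (R1) satisfies $\delta>1/(2\beta)$, I can pick $\alpha\in(1/(2\beta),\delta)\subset(1/(2\beta),1)$; then $C'n^{-\delta/2}=o(n^{-\alpha/2})$, hence is bounded by $n^{-\alpha/2}$ for all sufficiently large $n$, uniformly in $f\in\mathcal{F}$ and $h\in\mathcal{F}_f(n^{-1/2})$, as the constants $C,C'$ depend only on $L$, $\delta$, $I_{\max}$ and the supremum in (R1).

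There is essentially no serious obstacle here: the statement is a deterministic ($L^2$-geometric) consequence of the quantitative differentiability in quadratic mean furnished by (R1). The only point requiring mild attention is matching exponents, namely choosing $\alpha$ strictly below the $\delta$ of (R1) and above $1/(2\beta)$, which is possible exactly because (R1) already places $\delta$ in that interval.
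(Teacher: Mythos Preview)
Your proof is correct and follows essentially the same route as the paper: both linearize $s(\cdot,\theta)$ via (R1) and bound the resulting quadratic difference termwise. The paper writes the same computation in terms of the random variables $\xi_{ni}=\sqrt{z_{ni}}-1$ and $\eta_{ni}=\tfrac12 h(t_{ni})\overset{\bullet}{l}_{ni}(f)$ and uses Cauchy--Schwarz on $E(\xi_{ni}^2-\eta_{ni}^2)=E[(\xi_{ni}-\eta_{ni})(\xi_{ni}+\eta_{ni})]$, invoking (R2) for the bound $E(\xi_{ni}+\eta_{ni})^2\le cn^{-1}$; your $L^2(\mu)$ formulation with $\bigl|\|A_i\|^2-\|B_i\|^2\bigr|\le 2\|A_i-B_i\|\,\|B_i\|+\|A_i-B_i\|^2$ is the identical estimate phrased geometrically, and has the minor advantage that it only needs (R1) and (R3), not (R2).
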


\begin{proof}
Let $\delta _1\in (\frac 1{2\beta },1)$ and $\delta _2\in (\frac{2\beta +1}{%
2\beta -1},\infty ),$ be respectively the real numbers for which
conditions (R1) and (R2) hold true, where $\beta \geq \frac 12.$ Let $%
\alpha =\min \{\delta _1,\frac{\delta _2-1}{\delta _2+1}\}.$ Set
\begin{equation*}
\xi _{ni}=\sqrt{z_{ni}-1},\quad \eta _{ni}=\frac 12h(t_{ni})\overset{\bullet
}{l}_{ni}(f),\quad i=1,...,n.
\end{equation*}
Then, for $i=1,...,n,$%
\begin{equation*}
\left| E_f^n\left( \xi _{ni}^2-\eta _{ni}^2\right) \right| ^2\leq E_f^n(\xi
_{ni}-\eta _{ni})^2E_f^n(\xi _{ni}+\eta _{ni})^2.
\end{equation*}
In view of assumption (R1), we have $\text{E}_f^n(\xi _{ni}-\eta _{ni})^2\leq
cn^{-1-\alpha }$, and  assumption (R2) implies  $\text{E}_f^n(\xi
_{ni}+\eta _{ni})^2\leq cn^{-1}.$ Thus,
\begin{equation*}
\left| E_f^n\left( \xi _{ni}^2-\eta _{ni}^2\right) \right| \leq
cn^{-1-\alpha /2},\quad i=1,...,n.
\end{equation*}
Finally
\begin{equation*}
\left| V_n(f,h)-\frac 18\sum_{i=1}^nh(t_{ni})^2I(f(t_{ni}))\right| =\frac
12\sum_{i=1}^n\left| E_f^n\left( \xi _{ni}^2-\eta _{ni}^2\right) \right|
\leq cn^{-\alpha /2}.
\end{equation*}
\end{proof}

\section{Proof of the local result\label{SECTION-Local}}

\subsection{Proof of Theorem \ref{Theorem R-2}}

Let $\beta >1/2$ and $f\in \mathcal{F}=\Sigma ^{\beta }.$ Recall that $%
\gamma _{n}$ is defined by (\ref{nonpar-rate}). Let $\alpha \in \left(
1/2\beta ,1\right) $ be the absolute constant in Theorem \ref{THEOREM-APPL-1}
and $d=\alpha -1/2\beta \leq 1.$ If we set $\alpha ^{\prime }=1/2\beta +qd$
where the absolute constant $q\in (0,1)$ will be specified later on, then $%
\alpha ^{\prime }\in \left( 1/2\beta ,1\right) $ and $\alpha ^{\prime
}<\alpha .$ Set $\delta _{n}=\gamma _{n}^{2\alpha ^{\prime }}$ and $%
M_{n}=[1/\delta _{n}]$; then clearly $M_{n}=O\left( \delta _{n}^{-1}\right)
. $ Set $t_{i}=i/n,$ $i=0,...,n.$ Let $a_{k}=\max \{t_{i}:t_{i}\leq \frac{k}{%
M_{n}}\},$ $k=0,...,M_{n}.$ Consider a partition of the unit interval $[0,1]$
into subintervals $A_{k}=(a_{k-1},a_{k}]$ where $k=1,...,M_{n}.$ To each
interval $A_{k}$ we attach the affine linear map $a_{k}(t):A_{k}\rightarrow
\lbrack 0,1]$ which transforms $A_{k}$ into the unit interval $[0,1].$ It is
clear that $\left| a_{k}(t)-a_{k}(s)\right| \leq c\delta _{n}^{-1}\left|
t-s\right| ,$ for $t,s\in A_{k}.$ Denote by $n_{k}$ the number of elements
in the set $\{i:t_{i}\in A_{k}\}$; it obviously satisfies $n\delta
_{n}=O\left( n_{k}\right) .$

Consider the local experiment $\mathcal{E}_{f}^{n}$ defined by a sequence of
independent r.v.'s $X_{1},...,X_{n},$ where each $X_{i}$ has the density $%
p(x,g(t_{i}))$ with $g=f+h,$ $h\in \Sigma _{f}^{\beta }(\gamma _{n}).$ Since
$[0,1]=\sum_{k=1}^{M_{n}}A_{k},$ we have in view of the independence of the $%
X_{i}$
\begin{equation*}
\mathcal{E}_{f}^{n}=\mathcal{E}_{f}^{n,1}\otimes ...\otimes \mathcal{E}%
_{f}^{n,M_{n}},
\end{equation*}
where the experiment $\mathcal{E}_{f}^{n,k}$ is generated by those
observations $X_{i}$ for which $t_{i}\in A_{k}.$ Set for brevity $%
f_{k}=f(a_{k}^{-1}(\cdot )),$ $g_{k}=g(a_{k}^{-1}(\cdot ))$ and $%
h_{k}=g_{k}-f_{k}.$ It is easy to see that $n_{k}^{1/2}h_{k}\in \mathcal{H}%
(\beta ,L_{1}),$ for some positive absolute constant $L_{1}.$ This means
that $h_{k}\in \mathcal{F}_{f_{k}}(n_{k}^{-1/2}).$ Consequently
\begin{equation*}
\mathcal{E}_{f}^{n,k}=\left( X^{n_{k}},\mathcal{X}^{n_{k}},\left\{
P_{f_{k}+h_{k}}^{n_{k}}:h\in \Sigma _{f}^{\beta }(\gamma _{n})\right\}
\right) ,
\end{equation*}
where $P_{s}^{n_{k}}=P_{s(1/n_{k})}\times ...\times P_{s(1)},$ for any
function $s\in \mathcal{F},$ and $P_{\theta }$ is the distribution on $(X,%
\mathcal{X})$ corresponding to the density $p(x,\theta ).$ It is clear that $%
\mathcal{E}_{f}^{n,k}$ is just a subexperiment of
\begin{equation*}
\mathcal{E}_{f_{k}}^{n_{k}}=\left( X^{n_{k}},\mathcal{X}^{n_{k}},\left\{
P_{f_{k}+h}^{n_{k}}:h\in \mathcal{F}_{f_{k}}(n_{k}^{-1/2})\right\} \right) .
\end{equation*}
Exactly in the same way we introduce the Gaussian counterparts of $\mathcal{E%
}_{f}^{n,k}:$ if $\mathcal{G}_{f}^{n}$ denotes the Gaussian experiment
introduced in Theorem \ref{Theorem R-2} then
\begin{equation*}
\mathcal{G}_{f}^{n}=\mathcal{G}_{f}^{n,1}\otimes ...\otimes \mathcal{G}%
_{f}^{n,M_{n}},
\end{equation*}
where the experiment $\mathcal{G}_{f}^{n,k}$ is generated by those
observations $Y_{i}$ (see Theorem \ref{Theorem R-2}) for which $t_{i}\in
A_{k}:$%
\begin{equation*}
\mathcal{G}_{f}^{n,k}=\left( R^{n_{k}},\mathcal{B}^{n_{k}},\left\{
Q_{f,h}^{n_{k}}:h\in \Sigma _{f}^{\beta }(\gamma _{n})\right\} \right) ,
\end{equation*}
where $Q_{s,u}^{n_{k}}=Q_{s(1/n_{k}),u(1/n_{k})}\times ...\times
Q_{s(1),u(1)},$ for any functions $s,u\in \mathcal{F},$ and $Q_{\theta ,\mu
} $ is the normal distribution with mean $\mu $ and variance $I(\theta
)^{-1}.$ It is clear that $\mathcal{G}_{f}^{n,k}$ is just a subexperiment of
\begin{equation*}
\mathcal{G}_{f_{k}}^{n_{k}}=\left( R^{n_{k}},\mathcal{B}^{n_{k}},\left\{
Q_{f_{k},h}^{n_{k}}:h\in \mathcal{F}_{f_{k}}(n_{k}^{-1/2})\right\} \right) .
\end{equation*}
According to Theorems \ref{THEOREM-APPL-1} and \ref{Theorem LAQ-1}, for any $%
f\in \mathcal{F}$ there is an experiment
\begin{equation*}
\widetilde{\mathcal{E}}_{f}^{n_{k}}=\left( \Omega ^{0},\mathcal{A}%
^{0},\left\{ \widetilde{P}_{f,h}^{n_{k}}:h\in \mathcal{F}_{f}(n_{k}^{-1/2})%
\right\} \right) ,
\end{equation*}
equivalent to $\mathcal{E}_{f_{k}}^{n_{k}}$ and an equivalent version $%
\widetilde{\mathcal{G}}_{f}^{n_{k}}$ of $\mathcal{G}_{f_{k}}^{n_{k}},$
defined on the same measurable space $(\Omega ^{0},\mathcal{A}^{0})$ with
measures $\widetilde{Q}_{f,h}^{n_{k}}$, such that uniformly in $f\in
\mathcal{F}$ and $h\in \mathcal{F}_{f}(n_{k}^{-1/2}),$%
\begin{equation}
H^{2}\left( \widetilde{P}_{f,h}^{n_{k}},\widetilde{Q}_{f,h}^{n_{k}}\right)
=O\left( n_{k}^{-\alpha }\right)  \label{LL-2}
\end{equation}
for some $\alpha \in (1/2\beta ,1).$ Set
\begin{eqnarray*}
\widetilde{\mathcal{E}}_{f}^{n,k} &=&\left( R^{n_{k}},\mathcal{B}%
^{n_{k}},\left\{ P_{f_{k},h_{k}}^{n_{k}}:h\in \Sigma _{f}^{\beta }(\gamma
_{n})\right\} \right) . \\
\widetilde{\mathcal{E}}_{f}^{n} &=&\widetilde{\mathcal{E}}_{f}^{n,1}\otimes
...\otimes \widetilde{\mathcal{E}}_{f}^{n,M_{n}}.
\end{eqnarray*}
and define $\widetilde{\mathcal{G}}_{f}^{n}$ analogously. Since $\widetilde{%
\mathcal{E}}_{f}^{n,k}$ and $\mathcal{E}_{f}^{n,k}$ are (exactly)
equivalent, it follows that
\begin{equation*}
\Delta \left( \mathcal{E}_{f}^{n},\widetilde{\mathcal{E}}_{f}^{n}\right)
=\Delta \left( \mathcal{G}_{f}^{n},\widetilde{\mathcal{G}}_{f}^{n}\right) =0
\end{equation*}
which in turn implies
\begin{equation*}
\Delta \left( \mathcal{E}_{f}^{n},\mathcal{G}_{f}^{n}\right) =\Delta \left(
\widetilde{\mathcal{E}}_{f}^{n},\widetilde{\mathcal{G}}_{f}^{n}\right) .
\end{equation*}
In view of (\ref{Le Cam-to-L1}), (\ref{L1-to-Hell}), (\ref{Hell-square}) and
(\ref{LL-2}), we have
\begin{eqnarray*}
\Delta \left( \widetilde{\mathcal{E}}_{f}^{n},\widetilde{\mathcal{G}}%
_{f}^{n}\right) &\leq &2\sum_{k=1}^{M_{n}}\sup_{h\in \Sigma _{f}^{\beta
}(\gamma _{n})}H^{2}\left( \widetilde{P}_{f_{k},h_{k}}^{n,k},\widetilde{Q}%
_{f_{k},h_{k}}^{n,k}\right) \\
&=&O\left( M_{n}n_{k}^{-\alpha }\right) =O\left( \delta _{n}^{-1}(n\delta
_{n})^{-\alpha }\right) .
\end{eqnarray*}
Choosing $q\leq \frac{1}{4},$ by an elementary calculation we obtain $\delta
_{n}^{-1}(n\delta _{n})^{-\alpha }=o\left( 1\right) .$ Thus Theorem \ref
{Theorem R-2} is proved.

\subsection{Homoscedastic form of the local result}

In order to globalize the local result in Theorem \ref{Theorem R-2}, we need
to transform the heteroscedastic Gaussian approximation into a homoscedastic
one. This is done by means of any transformation $\Gamma (\theta )$ of the
functional parameter $f$ which satisfies $\Gamma ^{\prime }\left( \theta
\right) =\sqrt{I(\theta )},$ where $I(\theta )$ is the Fisher information in
the parametric model $\mathcal{E}.$ We shall derive the following corollary
of Theorem \ref{Theorem R-2}.

\begin{corollary}
\label{CORR-VAR_ST}Let ${\beta }>1/2$ and $I\left( \theta \right) $ be the
Fisher information in the parametric experiment $\mathcal{E}.$ Assume that
the density $p(x,\theta )$ satisfies the regularity conditions (R1-R3).
Assume also that $I(\theta )$, as a function of $\theta $, satisfies a
H\"{o}lder condition with exponent $\alpha \in (1/2\beta ,1).$ For any $f\in
\Sigma ^{\beta },$ let $\mathcal{G}_{f}^{n}$ be the local Gaussian
experiment generated by observations
\begin{equation*}
Y_{i}^{n}=\Gamma (g\left( i/n\right) )+\varepsilon _{i},\quad i=1,...,n,
\end{equation*}
with $g=f+h,$ $h\in \Sigma _{f}^{\beta }\left( \gamma _{n}\right) ,$ where $%
\varepsilon _{1},...,\varepsilon _{n}$ is a sequence of i.i.d. standard
normal r.v.'s (not depending on $f$). Then, uniformly in $f\in \Sigma ,$ the
sequence of local experiments $\mathcal{E}_{f}^{n},$ $n=1,2,...$ is
asymptotically equivalent to the sequence of local Gaussian experiments $%
\mathcal{G}_{f}^{n},$ $n=1,2,...:$%
\begin{equation*}
\sup_{f\in \Sigma ^{\beta }}\Delta \left( \mathcal{E}_{f}^{n},\mathcal{G}%
_{f}^{n}\right) \rightarrow 0,\quad \text{as}\quad n\rightarrow \infty .
\end{equation*}
\end{corollary}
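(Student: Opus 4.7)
I would deduce the corollary from Theorem \ref{Theorem R-2} by a deterministic coordinate change in the heteroscedastic local Gaussian experiment, followed by a first order Taylor expansion of $\Gamma$. The point is that in $\mathcal{G}_f^n$ of Theorem \ref{Theorem R-2} the centering $f$ is \emph{fixed} throughout the local experiment, so both $\Gamma(f(i/n))$ and $\sqrt{I(f(i/n))}$ are known numbers, not depending on the unknown parameter $h$. Applying the measurable bijection
\[
y_i \mapsto \Gamma(f(i/n)) + \sqrt{I(f(i/n))}\,y_i,\qquad i=1,\ldots,n,
\]
to the observations $Y_i^n = h(i/n) + I(f(i/n))^{-1/2}\varepsilon_i$ of that experiment produces an exactly equivalent experiment generated by $Z_i^n = \Gamma(f(i/n)) + \sqrt{I(f(i/n))}\,h(i/n) + \varepsilon_i$. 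Hence it suffices to compare the latter to the target experiment of the Corollary, which is generated by $\widetilde Y_i^n = \Gamma(g(i/n)) + \varepsilon_i$ with $g=f+h$.

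Since both families can be coupled on a common space by re-using the same $\varepsilon_i$'s, under each parameter $h$ the two laws are products of unit-variance normals with means differing by
\[
R_i := \Gamma(g(i/n)) - \Gamma(f(i/n)) - \sqrt{I(f(i/n))}\,h(i/n) = h(i/n)\int_0^1 \Bigl(\sqrt{I(f(i/n)+sh(i/n))} - \sqrt{I(f(i/n))}\Bigr)\,ds.
\]
The hypothesis that $I(\theta)$ is Hölder of exponent $\alpha\in(1/2\beta,1)$ combined with the lower bound $I\geq I_{\min}>0$ from (R3) implies that $\sqrt I$ is Hölder of the same exponent $\alpha$ on $\Theta$, and therefore $|R_i|\leq C|h(i/n)|^{1+\alpha}\leq C\gamma_n^{1+\alpha}$. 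The standard Hellinger estimate for products of unit-variance normals then yields
\[
H^2\bigl(\widetilde P_{f,h}^n,\widetilde Q_{f,h}^n\bigr) \leq \tfrac{1}{8}\sum_{i=1}^n R_i^2 \leq C\,n\,\gamma_n^{2(1+\alpha)}.
\]

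The last step is to check that the right-hand side tends to $0$ uniformly in $f\in\Sigma^\beta$ and $h\in\Sigma_f^\beta(\gamma_n)$; from there the Le Cam distance bound follows by the same chain of inequalities (\ref{Le Cam-to-L1})–(\ref{Hell-square}) used at the end of the proof of Theorem \ref{Theorem R-2}. Using $\gamma_n = O\bigl((\log n/n)^{\beta/(2\beta+1)}\log^c n\bigr)$ from (\ref{nonpar-rate}), the power of $n$ in the bound is $1 - 2(1+\alpha)\beta/(2\beta+1)$, and a short computation shows this is strictly negative precisely when $\alpha>1/(2\beta)$, which is the assumption of the corollary. The only delicate point of the argument is tracking this exponent carefully enough to see that the Hölder hypothesis on $I$ is exactly what is needed to absorb the logarithmic factors; everything else is an exact measurable transformation and a textbook Gaussian Hellinger computation.
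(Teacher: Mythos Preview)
Your proposal is correct and follows essentially the same route as the paper's proof: reduce to comparing two unit-variance Gaussian product experiments whose means differ by $R_i=\Gamma(g(i/n))-\Gamma(f(i/n))-\sqrt{I(f(i/n))}\,h(i/n)$, bound $|R_i|=O(\gamma_n^{1+\alpha})$ via the H\"older property of $\sqrt{I}$, and conclude by the Gaussian Hellinger formula together with $\alpha>1/(2\beta)$. The only difference is presentational: you make the deterministic affine transformation $y_i\mapsto\Gamma(f(i/n))+\sqrt{I(f(i/n))}\,y_i$ explicit, whereas the paper jumps directly to comparing the means $m_i^1$ and $m_i^2$ and writes the remainder as $o(n^{-1/2})$ rather than carrying out the exponent arithmetic you do at the end.
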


\begin{proof}
It will be shown that the Gaussian experiments $%
\mathcal{G}_f^{1,n}$ and $\mathcal{G}_f^{2,n}$ are asymptotically
equivalent, where $\mathcal{G}_f^{1,n}$ is generated by  observations
\begin{equation}
Y_i^n=g\left( i/n\right) +I(f(i/n))^{-1/2}\varepsilon _i,\quad i=1,...,n
\label{VV-1}
\end{equation}
and $\mathcal{G}_f^{2,n}$ is generated by observations
\begin{equation}
Y_i^n=\Gamma (g\left( i/n\right) )+\varepsilon _i,\quad i=1,...,n,
\label{VV-2}
\end{equation}
with $g=f+h,$ $h\in \Sigma _f^\beta (\gamma _n)$ and $\varepsilon
_1,...,\varepsilon _n$ being a sequence of i.i.d. standard normal r.v.'s.
Since $\Gamma ^{\prime }(\theta )=\sqrt{I(\theta )}$ and $I(\theta )$ satisfies
a H\"older condition with exponent $\alpha \in (\frac 1{2\beta },1),$ a  Taylor
expansion yields
\begin{eqnarray*}
\Gamma (\theta +u)-\Gamma (\theta ) &=&u\sqrt{I(\theta )}+u\left( \sqrt{%
I(\theta +u)}-\sqrt{I(\theta )}\right) \\
&=&u\sqrt{I(\theta )}+o(\left| u\right| ^{1+\alpha }).
\end{eqnarray*}
Then, taking into account (\ref{nonpar-rate}), we arrive at
\begin{equation*}
\Gamma (g(i/n))-\Gamma (f(i/n))=h(i/n)\sqrt{I(f(i/n))}+o(n^{-1/2}).
\end{equation*}
Set for brevity $m_i^1=\Gamma (g(i/n))-\Gamma (f(i/n))$ and $m_i^2=h(i/n)%
\sqrt{I(f(i/n))}.$ Let $Q_{f,h}^{1,n}$ and $Q_{f+h}^{2,n}$ be the
probability measures induced by (\ref{VV-1}) and (\ref{VV-2}). Then, using (%
\ref{Hell-square}) and (\ref{Hell-Gauss}), the Hellinger distance between $%
Q_{f,h}^{1,n}$ and $Q_{f+h}^{2,n}$ can easily be  seen to satisfy
\begin{equation*}
\frac 12H^2\left( Q_{f,h}^{1,n},Q_{f+h}^{2,n}\right) =1-\exp \left( -\frac
18\sum_{i=1}^n\left( m_i^1-m_i^2\right) ^2\right) =o(1),\quad n\rightarrow
\infty .
\end{equation*}
The claim on Le Cam distance convergence now follows from (\ref{Le
Cam-to-L1}) and (\ref{L1-to-Hell}).
\end{proof}

\section{Proof of the global result\label{SECTION-Global}}

In this section we shall prove Theorem \ref{Theorem R-1}.

Let ${\mathcal{E}}^{n}$ and $\mathcal{G}^{n}$ be the global experiments
defined in Theorem \ref{Theorem R-1}. Let $f\in \Sigma $ (we shall omit the
superscript $\beta $ from notation $\Sigma ^{\beta }$ and $\Sigma
_{f}^{\beta }(\gamma _{n})$) Denote by $J^{\prime }$ and $J^{\prime \prime }$
the sets of odd and even numbers, respectively, in $J=\left\{
1,...,n\right\} .$ Put
\begin{equation*}
X^{\prime ,n}=\prod_{i\in J^{\prime }}X^{\left( i\right) },\quad X^{\prime
\prime ,n}=\prod_{i\in J^{\prime \prime }}X^{\left( i\right) },\quad
R^{\prime \prime ,n}=\prod_{i\in J^{\prime \prime }}R^{(i)},\quad \mathbf{S}%
^{n}=\prod_{i=1}^{n}\mathbf{S}_{i},
\end{equation*}
where $X^{\left( i\right) }=X{,}$ $R^{\left( i\right) }=\mathrm{R}{,}$ $%
\mathbf{S}_{i}=X$ if $i$ is odd and $\mathbf{S}_{i}=\mathrm{R}$ if $i$ is
even, $i\in J.$ Consider the following product (local) experiments
corresponding to observations with even indices $i\in J:$
\begin{equation*}
{\mathcal{E}}_{f}^{\prime \prime ,n}=\bigotimes_{i\in J^{\prime \prime }}{%
\mathcal{E}}_{f}^{\left( i\right) },\quad \mathcal{G}_{f}^{\prime \prime
,n}=\bigotimes_{i\in J^{\prime \prime }}\mathcal{G}_{f}^{\left( i\right) },
\end{equation*}
where
\begin{eqnarray*}
{\mathcal{E}}_{f}^{\left( i\right) } &=&\left( {X},\mathcal{X},\left\{
P_{g(i/n)}:g=f+h,\;h\in \Sigma _{f}(\gamma _{n})\right\} \right) , \\
\mathcal{G}_{f}^{\left( i\right) } &=&\left( {\ }\mathrm{R},{\mathcal{B}}%
,\left\{ Q_{g(i/n)}:g=f+h,\;h\in \Sigma _{f}(\gamma _{n})\right\} \right) .
\end{eqnarray*}
Along with this introduce the global experiments
\begin{equation*}
{\mathcal{E}}^{\prime ,n}=\bigotimes_{i\in J^{\prime }}{\mathcal{E}}^{\left(
i\right) },\quad {\mathcal{F}}^{n}=\bigotimes_{i=1}^{n}{\mathcal{F}}^{\left(
i\right) },
\end{equation*}
where,
\begin{equation*}
\mathcal{F}^{\left( i\right) }=\left\{
\begin{array}{l}
\mathcal{E}^{\left( i\right) },\quad \text{if}\quad i\quad \text{is odd,} \\
\mathcal{G}^{\left( i\right) },\quad \text{if}\quad i\quad \text{is even,}
\end{array}
\right.
\end{equation*}
and
\begin{equation*}
{\mathcal{E}}^{\left( i\right) }=\left( X,\mathcal{X},\left\{
P_{f(t_{i})}:f\in \Sigma \right\} \right) ,\quad \mathcal{G}^{\left(
i\right) }=\left( \mathrm{R},{\mathcal{B}},\left\{ Q_{f(t_{i})}:f\in \Sigma
\right\} \right) .
\end{equation*}
It is clear that
\begin{equation*}
\mathcal{F}^{n}=\left( \mathbf{S}^{n},\mathcal{B}(\mathbf{S}^{n}),\left\{
F_{f}^{n}:f\in \Sigma \right\} \right) ,
\end{equation*}
where $F_{f}^{n}=F_{f}^{\left( 1\right) }\times ...\times F_{f}^{\left(
n\right) },$%
\begin{equation*}
F_{f}^{\left( i\right) }=\left\{
\begin{array}{l}
P_{f\left( i/n\right) },\quad \text{if}\quad i\quad \text{is odd,} \\
Q_{f\left( i/n\right) },\quad \text{if}\quad i\quad \text{is even,}
\end{array}
\right.
\end{equation*}
for $i\in J.$ We will show that the global experiments $\mathcal{E}^{n}$ and
$\mathcal{F}^{n}$ are asymptotically equivalent. Toward this end, we note
that by Corollary \ref{CORR-VAR_ST} the local experiments $\mathcal{E}%
_{f}^{\prime \prime ,n}$ and $\mathcal{G}_{f}^{\prime \prime ,n}$ are
asymptotically equivalent uniformly in $f\in \Sigma :$%
\begin{equation}
\sup_{f\in \Sigma ^{\beta }}\Delta \left( \mathcal{E}_{f}^{\prime \prime ,n},%
\mathcal{G}_{f}^{\prime \prime ,n}\right) =o\left( 1\right) .
\label{loc-delta-dist}
\end{equation}
Let $\left\| \cdot \right\| _{\text{Var}}$denote the total variation norm
for measures and let $P_{g}^{\prime \prime ,n},$ $Q_{g}^{\prime \prime ,n}$
be the product measures corresponding to the local experiments $\mathcal{E}%
_{f}^{\prime \prime ,n}$ and $\mathcal{G}_{f}^{\prime \prime ,n}:$%
\begin{equation}
P_{g}^{\prime \prime ,n}=\bigotimes_{i\in J^{\prime \prime
}}P_{g(t_{i})},\quad Q_{g}^{\prime \prime ,n}=\bigotimes_{i\in J^{\prime
\prime }}Q_{g(t_{i})}.  \label{PG.2}
\end{equation}
Then (\ref{loc-delta-dist}) implies that for any $f\in \Sigma $ there is a
Markov kernel $K_{f}^{n}$ such that
\begin{equation}
\sup_{f\in \Sigma }\sup_{h\in \Sigma _{f}(\gamma _{n})}\left\|
K_{f}^{n}\cdot P_{f+h}^{\prime \prime ,n}-Q_{f+h}^{\prime \prime ,n}\right\|
_{\text{Var}}=o\left( 1\right) .  \label{PG.1}
\end{equation}
Let us establish that there is a Markov kernel $M^{n}$ (not depending on $f$%
) such that
\begin{equation}
\sup_{f\in \Sigma }\left\| M^{n}\cdot P_{f}^{n}-F_{f}^{n}\right\| _{\text{Var%
}}=o\left( 1\right) .  \label{PG.2a}
\end{equation}
First note that any vector $x\in X^{n}$ can be represented as $(x^{\prime
},x^{\prime \prime })$ where $x^{\prime }$ and $x^{\prime \prime }$ are the
corresponding vectors in $X^{\prime ,n}$ and $X^{\prime \prime ,n}.$ The
same applies for $s\in \mathbf{S}^{n}:$ $s=(x^{\prime },y^{\prime \prime }),$
where $x^{\prime }\in X^{\prime ,n}$ and $y^{\prime \prime }\in R^{\prime
\prime ,n}.$ For any $x=(x^{\prime },x^{\prime \prime })\in X^{n}$ and $B\in
\mathcal{B}(\mathbf{S}^{n})$ set
\begin{equation*}
M^{n}(x,B)=\int_{R^{\prime \prime ,n}}\mathbf{1}_{B}\left( (x^{\prime
},y^{\prime \prime })\right) K_{\widehat{f}_{n}(x^{\prime })}^{n}(x^{\prime
\prime },dy^{\prime \prime }),
\end{equation*}
where $\widehat{f}_{n}(x^{\prime })$ is the preliminary estimator provided
by Assumption (G1). It is easy to see that
\begin{eqnarray}
\left( M^{n}\cdot P_{f}^{n}\right) (B) &=&\int_{X^{\prime
,n}}\int_{X^{\prime \prime ,n}}M^{n}\left( (x^{\prime },x^{\prime \prime
}),B\right) P_{f}^{\prime \prime ,n}(dx^{\prime \prime })P_{f}^{\prime
,n}(dx^{\prime })  \notag \\
&=&\int_{X^{\prime ,n}}\int_{R^{\prime \prime ,n}}\mathbf{1}_{B}\left(
(x^{\prime },y^{\prime \prime })\right) \left( K_{\widehat{f}_{n}(x^{\prime
})}^{n}\cdot P_{f}^{\prime \prime ,n}\right) (dy^{\prime \prime
})P_{f}^{\prime ,n}(dx^{\prime })  \label{PG.3}
\end{eqnarray}
and
\begin{equation}
F_{f}^{n}(B)=\int_{X^{\prime ,n}}\int_{X^{\prime \prime ,n}}\mathbf{1}%
_{B}\left( (x^{\prime },y^{\prime \prime })\right) Q_{f}^{\prime \prime
,n}(dy^{\prime \prime })P_{f}^{\prime ,n}(dx^{\prime }),  \label{PG.4}
\end{equation}
where $P_{f}^{\prime ,n}$ is the measure in the experiment ${\mathcal{E}}%
^{\prime ,n}$ defined by the analogy with $P_{f}^{\prime \prime ,n}$ in (\ref
{PG.2}), but with $J^{\prime }$ replacing $J^{\prime \prime }.$ By
Assumption (G1),
\begin{equation}
\sup_{f\in \Sigma }P_{f}^{\prime ,n}(A_{f}^{c})=o\left( 1\right) ,
\label{PG.5}
\end{equation}
where $A_{f}=\left\{ x^{\prime }\in X^{\prime ,n}:\left\| \widehat{f}%
_{n}(x^{\prime })-f\right\| _{\infty }\leq c\gamma _{n}\right\} .$ Then (\ref
{PG.3}) and (\ref{PG.4}) imply
\begin{eqnarray*}
&&\left| \left( M^{n}\cdot P_{f}^{n}\right) (B)-F_{f}^{n}(B)\right| \leq
2P_{f}^{\prime ,n}(A_{f}^{c}) \\
&&+\int_{A_{f}}\sup_{f\in \Sigma }\sup_{h\in \Sigma _{f}^{\beta }(\gamma
_{n})}\left\| K_{f}^{n}\cdot P_{f+h}^{\prime \prime ,n}-Q_{f+h}^{\prime
\prime ,n}\right\| _{\text{Var}}P_{f}^{\prime ,n}(dx^{\prime }).
\end{eqnarray*}
Using (\ref{PG.1}) and (\ref{PG.5}) we obtain (\ref{PG.2a}). This implies
that the one-sided deficiency $\delta \left( {\mathcal{E}}^{n},{\mathcal{F}}%
^{n}\right) $ is less that $c_{2}\varepsilon _{n}.$ The bound for $\delta
\left( {\mathcal{F}}^{n},{\mathcal{E}}^{n}\right) $ can be obtained in the
same way; for this we need a result analogous to condition (G1) in the
Gaussian experiment $\mathcal{G}^{n}$. Since the function $\Gamma $ is
smooth and strictly monotone, the existence of a such preliminary estimator
in $\mathcal{G}^{n}$ follows from Korostelev (\cite{koro}). This proves that
the Le Cam distance between $\mathcal{E}^{n}$ and $\mathcal{F}^{n}$ goes to $%
0.$ In the same way we can show that $\mathcal{F}^{n}$ and $\mathcal{G}^{n}$
are asymptotically equivalent. As a result we obtain asymptotic equivalence
of the experiments $\mathcal{E}^{n}$ and $\mathcal{G}^{n}.$ Theorem \ref
{Theorem R-1} is proved.


\section{Appendix}

\subsection{Koml\'os-Major-Tusn\'ady approximation}

Assume that on the probability space $(\Omega ^{\prime },\mathcal{F}^{\prime
},P^{\prime })$ we are given a sequence of independent r.v.'s $%
X_{1},...,X_{n}$ such that for all $i=1,...,n$%
\begin{equation*}
E^{\prime }X_{i}=0,\quad c_{\min }\leq E^{\prime }X_{i}^{2}\leq c_{\max },
\end{equation*}
for some positive absolute constants $c_{\min }$ and $c_{\max }.$ Hereafter $%
\text{E}^{\prime }$ denotes the expectation under the measure $P^{\prime }.$
Assume that the following condition, due to Sakhanenko, is satisfied: there
is a sequence $\lambda _{n},$ $n=1,2,...$ of real numbers, $0<\lambda
_{n}\leq \lambda _{\max },$ where $\lambda _{\max }$ is an absolute
constant, such that for all $i=1,....,n,$%
\begin{equation*}
\lambda _{n}E^{\prime }\left| X_{i}\right| ^{3}\exp \left( \lambda
_{n}\left| X_{i}\right| \right) \leq E^{\prime }X_{i}^{2}.
\end{equation*}
Along with this, assume that on another probability space $(\Omega ,\mathcal{%
F},P)$ we are given a sequence of independent normal r.v.'s $%
N_{1},...,N_{n}, $ such that, for any $i=1,...,n,$%
\begin{equation*}
EN_{i}=0,\quad EN_{i}^{2}=E^{\prime }X_{i}^{2}.
\end{equation*}
Let $\mathcal{H}(1/2,L)$ be a H\"{o}lder ball with exponent $1/2$ on the
unit interval $[0,1],$ i.e. the set of functions satisfying:
\begin{equation*}
\left| f(x)-f(y)\right| \leq L\left| x-y\right| ^{1/2},\quad \left|
f(x)\right| \leq L,\quad x,y\in \lbrack 0,1],
\end{equation*}
where $L$ is an absolute constant.

In the sequel, the notation $\mathcal{L}(X)=\mathcal{L}(Y)$ for r.v.'s means
equality of their distributions.

The following assertion is proved in Grama and Nussbaum \cite{Gr-Nu-1}.

\begin{theorem}
\label{THEOREM-HUNG-CONSTR}A sequence of independent r.v.'s $\widetilde{X}%
_{1},...,\widetilde{X}_{n}$ can be constructed on the probability space $%
(\Omega ,\mathcal{F},P)$ such that $\mathcal{L}(\widetilde{X}_{i})=\mathcal{L%
}(X_{i}),$ $i=1,...,n,$ and
\begin{equation*}
\sup_{f\in \mathcal{H}(1/2,L)}P\left( \left| \sum_{i=1}^{n}f(i/n)\left(
\widetilde{X}_{i}-N_{i}\right) \right| >x(\log n)^{2}\right) \leq c_{0}\exp
\left( -c_{1}\lambda _{n}x\right) ,\quad x\geq 0,
\end{equation*}
where $c_{0}$ and $c_{1}$ are absolute constants.
\end{theorem}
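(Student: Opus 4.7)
The plan is to combine Sakhanenko's classical strong approximation with a dyadic Haar chaining argument. First, I would apply the Sakhanenko--KMT construction recursively on a dyadic tree over the index set $\{1,\dots,n\}$, producing $\widetilde X_1,\dots,\widetilde X_n$ with $\mathcal{L}(\widetilde X_i)=\mathcal{L}(X_i)$ such that, for every dyadic block $I_{k,j}\subset\{1,\dots,n\}$ at level $k\in\{0,1,\dots,\lceil\log_2 n\rceil\}$, the local discrepancy $W_{k,j}=\sum_{i\in I_{k,j}}(\widetilde X_i-N_i)$ enjoys the classical one-dimensional KMT tail
\[P\bigl(|W_{k,j}|>c\log n+y\bigr)\le c_0\exp(-c_1\lambda_n y),\quad y\ge 0.\]
This is the standard Bahadur--Tusn\'ady quantile-coupling step, and Sakhanenko's exponential moment hypothesis $\lambda_n E'|X_i|^3\exp(\lambda_n|X_i|)\le E'X_i^2$ is exactly what yields the exponential (rather than merely polynomial) rate in $y$ at each node of the tree.

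Second, I would expand the H\"older-$1/2$ function $f$ in the Haar basis up to resolution $K=\lceil\log_2 n\rceil$, writing $f(i/n)\approx\sum_{k=0}^K\sum_{j=1}^{2^k}c_{k,j}\psi_{k,j}(i/n)$. Since each $\psi_{k,j}$ is (up to normalisation) a signed indicator of the two children of the dyadic interval $I_{k,j}$, the weighted sum decomposes as
\[\sum_{i=1}^n f(i/n)(\widetilde X_i-N_i)=\sum_{k=0}^{K}\sum_{j=1}^{2^k}c_{k,j}\bigl(W^{L}_{k,j}-W^{R}_{k,j}\bigr)+R_n,\]
where $W_{k,j}^{L},W_{k,j}^{R}$ are the discrepancies on the two children of $I_{k,j}$, and the aliasing remainder $R_n$ is negligible because the discrete grid $\{i/n\}$ already resolves the dyadic partition at scale $K$. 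H\"older-$1/2$ regularity yields the Besov-type bound $|c_{k,j}|\le CL\,2^{-k}$, so the contribution at level $k$ is at most $CL\cdot 2^k\cdot 2^{-k}\cdot\max_j(|W^{L}_{k,j}|+|W^{R}_{k,j}|)=CL\max_j|W_{k,j}|$, and summing over the $K+1\asymp\log n$ levels gives
\[\Bigl|\sum_i f(i/n)(\widetilde X_i-N_i)\Bigr|\le C(\log n)\cdot\max_{k,j}|W_{k,j}|.\]

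Finally, a union bound over the $O(n)$ pairs $(k,j)$ converts the per-block tail into $\max_{k,j}|W_{k,j}|\le C(\log n+x)$ with probability at least $1-c_0\exp(-c_1\lambda_n x)$, the $\log n$ factor eaten by the union bound being absorbed into the constant in front of $\log n$. Combining with the previous display and rescaling $x$ produces the stated bound. The main obstacle is executing the dyadic coupling so that it is \emph{simultaneously} valid for every block $I_{k,j}$: one must verify that the recursive quantile-matching of intermediate dyadic partial sums with the corresponding Gaussian aggregates preserves both marginal and joint laws of the $\widetilde X_i$, while yielding the classical KMT tail at every node of the tree. This is the technical heart of \cite{Gr-Nu-1}, and it is where the Sakhanenko exponential-moment hypothesis is essential: without it, the per-node coupling error would be polynomial rather than exponential in $y$, destroying the final exponential rate in $x$.
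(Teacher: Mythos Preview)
The paper does not contain a proof of this theorem: it is stated in the Appendix with the sentence ``The following assertion is proved in Grama and Nussbaum \cite{Gr-Nu-1}'' and is then used as a black box. So there is no proof in the present paper against which to compare your proposal.

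That said, your outline is sound and matches the architecture the paper itself points to. The remark immediately following the theorem notes that the result ``is an analog of the functional strong approximation established by Koltchinskii \cite{Kol}'', whose title explicitly mentions ``Haar expansions of classes of functions''. Your two-step plan --- a dyadic Sakhanenko/KMT coupling so that the discrepancies $W_{k,j}$ on \emph{every} dyadic block enjoy an exponential tail, followed by Haar chaining over $\mathcal H(1/2,L)$ --- is exactly the strategy one expects in \cite{Gr-Nu-1}. You have correctly isolated the hard step: arranging the recursive quantile matching so that (i) the $\widetilde X_i$ remain independent with the prescribed marginals and (ii) the KMT tail holds simultaneously for all $O(n)$ dyadic aggregates. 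The Haar-coefficient bound $|c_{k,j}|\le CL\,2^{-k}$ for H\"older-$1/2$ functions and the level-by-level accounting are correct, and the combination of the union bound over $O(n)$ nodes with the sum over $O(\log n)$ resolution levels indeed accounts for the $(\log n)^2$ in the threshold. One small point to make precise when you write this out: the union-bound cost $n$ must be absorbed by the exponential, which forces the threshold to include a term of order $\lambda_n^{-1}\log n$; this is harmless because the theorem's bound $c_0\exp(-c_1\lambda_n x)$ is vacuous for $x$ below that scale anyway.
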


This result is an analog of the functional strong approximation established
by Koltchinskii \cite{Kol} for the empirical processes.

\begin{remark}
Note that the r.v.'s $X_{1},...,X_{n}$ are not assumed to be identically
distributed. We also do not assume any additional richness of the
probability space $(\Omega ,\mathcal{F},P):$ the only assumption is that the
normal r.v.'s $N_{1},...,N_{n}$ exist.
\end{remark}

\subsection{Le Cam and Hellinger distances}

We made use of the following facts.

\textbf{a. }Let $\mathcal{E}=(X,\mathcal{X},\{P_{\theta }:\theta \in \Theta
\})$ and $\mathcal{G}=(Y,\mathcal{Y},\{Q_{\theta }:\theta \in \Theta \})$ be
two experiments with the same parameter space $\Theta .$ Assume that there
is some point $\theta _{0}\in \Theta $ such that $P_{\theta }\ll P_{\theta
_{0}}$ and $Q_{\theta }\ll Q_{\theta _{0}},$ $\theta \in \Theta $ and that
there are versions $\Lambda _{\theta }^{1}$ and $\Lambda _{\theta }^{2}$ of
the likelihoods $dP_{\theta }/dP_{\theta _{0}}$ and $dQ_{\theta }/dQ_{\theta
_{0}}$ on a common probability space $\left( \Omega ,\mathcal{A},P\right) .$
Then the Le Cam deficiency distance between $\mathcal{E}$ and $\mathcal{G}$
satisfies
\begin{equation}
\Delta \left( \mathcal{E},\mathcal{G}\right) \leq \sup_{\theta \in \Theta }%
\frac{1}{2}E_{P}\left| \Lambda _{\theta }^{1}-\Lambda _{\theta }^{2}\right| .
\label{Le Cam-to-L1}
\end{equation}
The proof of this assertion can be found in Le Cam and Yang \cite{LCY-2000},
p. 16 (see also Nussbaum \cite{Nuss}).

Let $H(\cdot ,\cdot )$ denotes the Hellinger distance between probability
measures: if $P$ and $Q$ are probability measures on the measurable space $%
(\Omega ,\mathcal{A})$ and $P\ll \nu $ and $Q\ll \nu ,$ where $\nu $ is a $%
\sigma $-finite measure on $(\Omega ,\mathcal{A}),$ then
\begin{equation}
H^{2}(P,Q)=\frac{1}{2}\int_{\Omega }\left( \left( \frac{dP}{d\nu }\right)
^{1/2}-\left( \frac{dQ}{d\nu }\right) ^{1/2}\right) ^{2}d\nu .
\label{Hell-dist}
\end{equation}
Define the measures $\widetilde{P}_{\theta }$ and $\widetilde{Q}_{\theta }$
by setting $d\widetilde{P}_{\theta }=\Lambda _{\theta }^{1}dP$ and $d%
\widetilde{Q}_{\theta }=\Lambda _{\theta }^{2}dP.$ Using the well-known
relation of the $L_{1}$ norm to $H(\cdot ,\cdot )$ (see Strasser \cite{Str85}%
, 2.15)
\begin{equation}
\frac{1}{2}E_{P}\left| \Lambda _{\theta }^{1}-\Lambda _{\theta }^{2}\right|
\leq \sqrt{2}H(\widetilde{P}_{\theta },\widetilde{Q}_{\theta }).
\label{L1-to-Hell}
\end{equation}

\textbf{b.} Let $P_{1},...,P_{n}$ and $Q_{1},...,Q_{n}$ be probability
measures on $\left( \Omega ,\mathcal{A}\right) .$ Set $P^{n}=P_{1}\times
...\times P_{n}$ and $Q^{n}=Q_{1}\times ...\times Q_{n}.$ Then
\begin{equation}
1-H^{2}\left( P^{n},Q^{n}\right) =\prod_{i=1}^{n}\left( 1-H^{2}\left(
P_{i},Q_{i}\right) \right)  \label{Hell-sum}
\end{equation}
and (cf. Strasser \cite{Str85}, 2.17)
\begin{equation}
H^{2}\left( P^{n},Q^{n}\right) \leq \sum_{i=1}^{n}H^{2}\left(
P_{i},Q_{i}\right) .  \label{Hell-square}
\end{equation}

\textbf{c.} Let $\Phi _{\mu }$ be the normal distribution with mean $\mu $
and variance $1.$ Then
\begin{equation}
H^{2}(\Phi _{\mu _{1}},\Phi _{\mu _{2}})=1-\exp \left( -\frac{1}{8}(\mu
_{1}-\mu _{2})^{2}\right) .  \label{Hell-Gauss}
\end{equation}

\subsection{An exponential inequality}

We made use of the following well-known inequality.

\begin{lemma}
\label{Lemma APX-1}Let $\xi $ be a r.v. such that $E\xi =0$ and $\left| \xi
\right| \leq a,$ for some positive constant $a.$ Then
\begin{equation*}
E\exp \left( \lambda \xi \right) \leq \exp \left( c\lambda ^{2}E\xi
^{2}\right) ,\quad \left| \lambda \right| \leq 1,
\end{equation*}
where $c=e^{a}/2.$
\end{lemma}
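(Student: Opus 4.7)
The plan is to prove the bound by expanding $e^{\lambda \xi}$ via Taylor series, exploiting the mean-zero assumption to kill the linear term, and controlling the remaining quadratic remainder by the boundedness $|\xi| \leq a$ together with $|\lambda|\leq 1$.

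First I would establish the elementary pointwise inequality
\begin{equation*}
e^{x} \leq 1 + x + \frac{x^{2}}{2}\, e^{|x|}, \qquad x\in \mathrm{R}.
\end{equation*}
This follows by writing $e^{x}-1-x=\sum_{k\geq 2} x^{k}/k!$, shifting the summation index $j=k-2$, and bounding
\begin{equation*}
\sum_{k\geq 2}\frac{x^{k}}{k!} \leq \sum_{j\geq 0} \frac{|x|^{j+2}}{(j+2)!} \leq \frac{x^{2}}{2} \sum_{j\geq 0}\frac{|x|^{j}}{j!} = \frac{x^{2}}{2} e^{|x|},
\end{equation*}
where the last inequality uses $(j+2)! \geq 2\cdot j!$ for $j\geq 0$.

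Next I would apply this bound pointwise with $x=\lambda \xi$, so that
\begin{equation*}
e^{\lambda \xi} \leq 1 + \lambda \xi + \frac{\lambda^{2}}{2}\, \xi^{2} e^{|\lambda \xi|}.
\end{equation*}
Taking the expectation and using $E\xi = 0$ eliminates the linear term. Since $|\xi|\leq a$ and $|\lambda|\leq 1$, one has $|\lambda \xi|\leq a$, whence $e^{|\lambda \xi|}\leq e^{a}$, and therefore
\begin{equation*}
E e^{\lambda \xi} \leq 1 + \frac{e^{a}}{2}\,\lambda^{2} E\xi^{2}.
\end{equation*}

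Finally, invoking the standard inequality $1+t\leq e^{t}$ with $t = (e^a/2)\lambda^2 E\xi^2 \geq 0$ yields the desired conclusion with $c = e^{a}/2$. There is no real obstacle here; the only point that requires a touch of care is verifying the Taylor-remainder bound term by term for $x<0$, where individual summands alternate in sign but are dominated in absolute value by the corresponding terms of $\frac{x^{2}}{2}e^{|x|}$.
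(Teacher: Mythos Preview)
Your proof is correct. The approach differs slightly from the paper's: you expand $e^{\lambda\xi}$ pointwise, take expectations, and finish with $1+t\le e^{t}$, whereas the paper Taylor-expands the cumulant generating function $\psi(\lambda)=\log E e^{\lambda\xi}$ directly, using $\psi(0)=\psi'(0)=0$ and bounding $\psi''(t)\le \mu''(t)=E[\xi^{2}e^{t\xi}]\le e^{a}E\xi^{2}$ (the step $\psi''\le\mu''$ uses $\mu(t)\ge 1$, which follows from Jensen and $E\xi=0$). Your route is a touch more elementary, avoiding differentiation of the moment generating function and the implicit appeal to Jensen; the paper's route yields the bound on $\log E e^{\lambda\xi}$ in one stroke without the auxiliary inequality $1+t\le e^{t}$. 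Both are standard and equally short.
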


\begin{proof}
Set $\mu \left( \lambda \right) =E\exp \left( \lambda \xi \right) $ and $%
\psi \left( \lambda \right) =\log \mu \left( \lambda \right) .$ A simple
Taylor expansion yields
\begin{equation}
\psi \left( \lambda \right) =\psi \left( 0\right) +\lambda \psi ^{\prime
}\left( 0\right) +\frac{\lambda ^2}2\psi ^{\prime \prime }\left( t\right)
,\quad \left| \lambda \right| \leq 1,  \label{APX-1}
\end{equation}
where $\left| t\right| \leq 1,$ $\psi \left( 0\right) =0,$ $\psi ^{\prime
}\left( 0\right) =0,$%
\begin{equation}
\psi ^{\prime \prime }\left( t\right) =\frac{\mu ^{\prime \prime }\left(
t\right) }{\mu \left( t\right) }-\frac{\mu ^{\prime }\left( t\right) ^2}{\mu
\left( t\right) ^2}\leq \mu ^{\prime \prime }\left( t\right) \leq e^aE\xi ^2.
\label{APX-2}
\end{equation}
Inserting (\ref{APX-2}) in (\ref{APX-1}) we get $\psi \left( \lambda
\right) \leq \frac 12e^a\text{E}\xi ^2.$
\end{proof}

\end{document}